\chardef\bslash=`\\ % p. 424, TeXbook
\newtheorem{thm}{Theorem}[section]
\newtheorem{lem}[thm]{Lemma}
\newtheorem{prop}[thm]{Proposition}
\newtheorem*{thm*}{Theorem}
\newtheorem*{lem*}{Lemma}
\newtheorem*{prop*}{Proposition}
\theoremstyle{definition}
\newtheorem{defn}{Definition}[section]
\theoremstyle{remark}
\newtheorem{rem}{Remark}[section]
\newcommand{\eval}[2][\right]{\relax
  \ifx#1\right\relax \left.\fi#2#1\rvert}
\begin{document}
\title[
Persistence of H\"{o}lder continuity 
for  integro-differential  equations]{Persistence of H\"{o}lder continuity 
\\for non-local integro-differential  equations}
\author{Kyudong Choi}
\address{1 University Station C1200, Austin, TX 78712, USA}
\thanks{Department of Mathematics, University of Texas at Austin}
\email{kchoi@math.utexas.edu}
\subjclass[2010]{Primary 35B45, 45G05, 47G20}
\keywords{image processing, integro-differential equations,  nonlinear partial differential equations, nonlocal operators}
%\markboth{Persistence of H\"{o}lder continuity 
% for
% % non-local
%   integro-differential  equations}{Persistence of H\"{o}lder %continuity 
% for 
% %non-local
%  integro-differential  equations}
\renewcommand{\sectionmark}[1]{}
\begin{abstract}
In this paper, we consider
  non-local integro-differential equations
  under certain natural assumptions
on the kernel, and obtain
 persistence of H\"{o}lder continuity 
for their solutions. In other words, we prove that a solution  stays in $C^\beta$ for all time %forever
 if
its initial data  lies in $C^\beta$. This result has an application
for a fully non-linear problem, which is used in the field of image processing.
The proof  is in the spirit of \cite{variation_kiselev} where
%the method of  \cite{variation_kiselev}, who 
%they observed
%the evolution of a class of test functions and
Kiselev and Nazarov
 established H\"{o}lder continuity
of the critical surface quasi-geostrophic (SQG) equation.
%We follow the spirit of \cite{variation_kiselev} to show
%persistence of H\"{o}lder continuity 
%of solutions for some non-local integro-differential equations
%under certain natural assumptions
%on the kernel. 
%Due to the structure of the proof, if the order  of %the equation
%is greater than or equal to $1$, then  
%we impose a natural cancellation condition to the %kernel,
% which
%%in order to 
%make the involved integral well-defined in the sense
%of principal-value.
%interpret the integral operator
%in classical sense.
% which
%showed global regularity of 
% the critical surface quasi-geostrophic (SQG) equation
%observation of evolution of some test functions. 
%We prove that if $u$ is a solution of some parabolic nonlinear 
%integral equation, then
%the derivative $\nabla u$ persists some H\"{o}lder continuity in $x$ forever.
\end{abstract}
\maketitle
\section{Introduction and the main result}\label{sec_Introduction}

Let $N\geq1$ be any dimension.
We consider the following evolution equation
\begin{equation}\label{main_lin_eq}
 %\partial_t w(t,x)-\int_{\mathbb{R}^N}[w(t,y)-w(t,x)]K(t,x,y)dy=0
  \partial_t w(t,x)=\int_{\mathbb{R}^N}[w(t,y)-w(t,x)]K(t,x,y)dy
\end{equation} 
% where we consider only 
%From now on we fix any  dimension $N\geq2$.\\
%\begin{defn}
where $K$ satisfies 
the $weak$-$(*)$-kernel condition,
which will be given in Definition \ref{def_parameter2}.
The above integral is understood in the sense of principal value.
More precisely,
% For convenience, 
 we denote the integral operator $T^K_t$ and $(T^K_t)_\epsilon$ for $\epsilon>0$ corresponding 
to any given kernel $K$
at time $t$ by %: %for a function $f$ on $\mathbb{R}^N$,
\begin{equation*}\begin{split}
 (T^K_t)_\epsilon(f)(x)&=\int_{|x-y|\geq\epsilon}[f(x)-f(y)]K(t,x,y)dy
 \quad\mbox{ and}\\
 (T^K_t)(f)(x)&=\lim\limits_{\epsilon\rightarrow 0}(T^K_t)_\epsilon(f)(x).
\end{split}\end{equation*}
%\end{defn}
%\begin{rem}
Then, \eqref{main_lin_eq} is equivalent to $(\partial_t w)(t,x)
+T^K_t(w(t,\cdot))(x)=0$. Related to the above singular integral, 
there have been many interests recently, not only from the field of analysis, but also from the field of probability
(e.g. Caffarelli  and Silvestre \cite{caffa_silve_regular},
Schwab \cite{MR2733264}, 
Bass  and Levin \cite{MR1895210}, 
Jacob, Potrykus,   and Wu  \cite{MR2728173},
 and Chen,  Kim,   and Kumagai  \cite{MR2806700}).\\

%\end{rem}
%\ \\

%History for the equation.\\
%and the origin and some applications???....\\

%\ \\

Our main concern   is to obtain \textit{a priori} estimate
for solutions of \eqref{main_lin_eq}. %We use the word $smooth$ to indicate $C^2$,
%which makes every terms  classical.
The aim is to prove the result \cite{ccv} of
Caffarelli, Chan,  and Vasseur with
different techniques (a similar result for the stationary case was obtained by Kassmann in \cite{kassman_apriori}).
In particular, we prove
 persistence of H\"{o}lder continuity in
 $L^\infty(0,\infty; C^\beta(\mathbb{R}^N))$,
 which is a new result,
by observing the evolution of a dual class of test functions. This class, which appears in the work of
 Kiselev and Nazarov \cite{variation_kiselev},
 plays a similar role of the dual space of $C^\beta$. %It will be proved that
%there exists a constant $\beta>0$ such that
%we have a solution of \eqref{main_lin_eq} for $C^\beta\cap L^1$ initial data with %\textit{a priori} 
%$L^\infty(0,\infty;C^\beta(\mathbb{R}^N))$ estimate. 
They obtained, in \cite{variation_kiselev},
H\"{o}lder regularity for solutions of
the critical surface quasi-geostrophic (SQG) 
equation.
It is interesting
to compare this method with that of 
Caffarelli  and Vasseur \cite{caf_vas}. In \cite{caf_vas}, the estimate 
$C^\beta([t,\infty)\times\mathbb{R}^N))$
for any $t>0$
was proved by using 
a De Giorgi iteration technique %The main difficulty of the proof in \cite{ccv} is
%to handle non-local character of the above integral in \eqref{main_lin_eq}.
(for other different proofs, we refer to 
Kiselev, Nazarov,  and Volberg \cite{knv} and Constantin and Vicol
\cite{constantin_vicol}).\\

%it was proved that $L^2$ initial data gives us a weak solution %of \eqref{main_lin_eq}
%% for kernels satisfying certain conditions
%with %\textit{a priori} 
%$C^\beta((t,\infty)\times\mathbb{R}^N)$  estimate by using 
%De Giorgi technique \\
We define the $(*)$-kernel condition
 on the kernel $K$.
\begin{defn}\label{def_parameter} 
Let $0<{\alpha}<2$, $0<\zeta\leq\infty$, 
$0\leq\omega<{\alpha}$
 and 
 %$0<{\Lambda^{-1}}\leq\Lambda<\infty$ 
 $1\leq\Lambda<\infty$ 
(for the case $\alpha\geq1$, 
three more parameters $\nu$, $s_0$ and $\tau$, which 
are satisfying $(\alpha-1)<\nu<1$,
$0<s_0\leq\infty$,
$0\leq\tau<\infty$ and $\nu+\omega<\min\{N,\alpha\}$,
are needed).
Then we say that a %non-negative 
measurable
 function
 $K: [0,T]\times\mathbb{R}^N\times\mathbb{R}^N
\rightarrow [0,\infty)$
satisfies the $(*)$-kernel condition on $[0,T]$ 
for the parameter set $\{{\alpha},\zeta,\omega,
%{\Lambda^{-1}},
\Lambda\}$ 
(if $\alpha\geq1$, 
for the parameter set $\{{\alpha},\zeta,\omega,
%{\Lambda^{-1}},
\Lambda,\nu,s_0,\tau\}$)
if the following conditions hold
for all finite $t\in[0,T]$:
\begin{equation}\label{cond_symm}
\odot
%\mbox{1.}
\mbox{ Symmetry in } x,y \mbox{: } K(t,x,y)=K(t,y,x) ,
\mbox{ for   } x,y\in\mathbb{R}^N.
\end{equation}
\begin{equation}\label{cond_bounds}
\odot
%\mbox{2.}
\mbox{ Bounds: }{\Lambda^{-1}}\cdot\mathbf{1}_{|x-y|\leq \zeta}
\leq K(t,x,y)|x-y|^{N+{\alpha}}
\leq \Lambda\cdot(1+ |x-y|^\omega)
\quad\mbox{ for   } x,y\in\mathbb{R}^N.
\end{equation}
For convenience, we define the associated function $k$ 
by $k(t,x,z)=K(t,x,x+z)|z|^{N+{\alpha}}$. 
Then the above two conditions are equivalent to
$k(t,x,y-x)=k(t,y,x-y)$ and 
${\Lambda^{-1}}\cdot\mathbf{1}_{|z|\leq\zeta}\leq k(t,x,z)
\leq \Lambda\cdot(1+|z|^\omega) $, respectively.\\
%\begin{equation}\begin{split}\label{smooth_assump_lin}
%\odot
%%\mbox{2.}
%k(\cdot,\cdot,\cdot)\mbox{  is smooth in } x \mbox{ in the sense that }&
%k\in L^\infty\Big(0,T;(C^b\cap W^{b,\infty})(\mathbb{R}^N\times\mathbb{R}^N)\Big)\\
%&\mbox{  for some integer  } b\geq \frac{N}{2}.
%\end{split}\end{equation}

Only when $\alpha\geq1$,   we assume one more condition:
\begin{equation}\label{cond_Holder}
\odot
%\mbox{3.}
\mbox{ Local H\"{o}lder continuity: }\sup\limits_{|z-\tilde{z}|\leq s_0, |z|\leq s_0}\frac{|{k(t,x,z)-k(t,x,\tilde{z})}|}{|z-\tilde{z}|^\nu}\leq 
\tau, \mbox{ for } x\in\mathbb{R}^N.
\end{equation} %for all $x$ and $t$.

%where we define the function $k$ by $k(t,x,y)=K(t,x,y)|x-y|^{N+{\alpha}}$.

We present the definition of 
the $weak$-$(*)$-kernel condition,
which
is slightly weaker than 
the above $(*)$-kernel condition in Definition
\ref{def_parameter}.
\end{defn}
\begin{defn}\label{def_parameter2} %Let $1\leq\alpha<2$.
Under the same setting of the parameters in Definition \ref{def_parameter}, 
%we define the weak-$(*)$-kernel condition
%excepts that we replace the Local H\"{o}lder continuity condition 
%\eqref{cond_Holder} with the %following
%Cancellation condition \eqref{cond_cancell}
%if $\alpha\geq1$.
we say that
 $K$
satisfies 
the $weak$-$(*)$-kernel condition 
on $[0,T]$ if 
$K$ 
satisfies \eqref{cond_symm} and \eqref{cond_bounds}
for the case $\alpha<1$.
If $\alpha\geq1$, we ask $K$ to
hold the following condition \eqref{cond_cancell} as well as
\eqref{cond_symm} and \eqref{cond_bounds}.
\begin{equation}\label{cond_cancell}
\odot
%\mbox{3.}
\mbox{ Cancellation: }%(only for }\alpha\geq1\mbox{) } 
\Big| \int_{S^{N-1}} 
k(t,x,s\sigma)\sigma d\sigma\Big|\leq \tau \cdot s^\nu %\cdot(1+s^\omega)
\mbox{ for  }
 s\in(0,s_0)
%s\in(0,\infty)
% 0<s<\infty
  \mbox{ and for   } x\in\mathbb{R}^N
\end{equation} % and for $ x\in\mathbb{R}^N$ 
where $\sigma$ is a surface element on the unit 
%surface 
sphere
$S^{N-1}\subset
\mathbb{R}^N$.
\end{defn}
\begin{rem}\label{rem_really_weaker}
 %It is easy to observe that 
 The  $(*)$-kernel condition in Definition \ref{def_parameter} 
 implies
  the $weak$-$(*)$-kernel condition
  in Definition \ref{def_parameter2}. Indeed,
for the case $\alpha<1$, 
%the $(*)$-kernel condition 
%in Definition \ref{def_parameter} 
%and the $weak$-$(*)$-kernel condition 
%in Definition \ref{def_parameter2} 
they
are exactly same. If $\alpha\geq1$, then
the only difference between them
 %between the above two
  is
that %when $\alpha\geq1$,  
the $(*)$-kernel condition needs \eqref{cond_Holder}
while the $weak$-$(*)$-kernel condition requires
\eqref{cond_cancell}.
Also, it is easy to verify that 
\eqref{cond_Holder} implies 
 \eqref{cond_cancell} up to a constant
 by the following argument:
for any $s\in(0,s_0/2)$,
\begin{equation*}\begin{split}
\Big| \int_{S^{N-1}} 
k(t,x,s\sigma)\sigma d\sigma\Big|&=
\Big| \int_{S^{N-1}_+} 
k(t,x,s\sigma)\sigma d\sigma
+\int_{S^{N-1}_-} 
k(t,x,s\sigma)\sigma d\sigma\Big|\\
%&=
%\Big| \int_{S^{N-1}_+} 
%k(t,x,s\sigma)\sigma d\sigma
%-\int_{S^{N-1}_+} 
%k(t,x,-s\sigma)\sigma d\sigma\Big|\\
&\leq
\int_{S^{N-1}_+}\Big|  
k(t,x,s\sigma)-k(t,x,-s\sigma)\Big| d\sigma.
\end{split}\end{equation*}  
where $S^{N-1}_+$ and $S^{N-1}_-$ are the upper and the lower
hemispheres, respectively. Then, thanks to
\eqref{cond_Holder}, we have
\begin{equation*}\begin{split}
&\leq
\int_{S^{N-1}_+}\tau\Big|2s\sigma \Big|^\nu 
 d\sigma\leq (C\tau)  \cdot s^\nu.
\end{split}\end{equation*} 

\end{rem}
%\begin{rem}
%the radius $4$ of the support of the lower bound in thewhere $V_N$ is the volume of unit sphere $S^{N-1}$ in $\mathbb{R}^N$. bounds %condition
%\eqref{cond_bounds} can be replaced with any value $>0$.
%\end{rem}
\begin{rem}
In the work of \cite{ccv}, the upper bound for $k$ is just $\Lambda$ while,
in this paper, we have $\Lambda\cdot(1+ |x-y|^\omega)$ 
in \eqref{cond_bounds}, which is slightly more general than that of \cite{ccv}.
Some examples with the  upper bound \eqref{cond_bounds}
can be found in Section 4 of Komatsu \cite{komatsu_fundamental}.
\end{rem}
\begin{rem}
The purpose of the condition \eqref{cond_cancell}
with $(\alpha-1)<\nu$ is 
to consider $T_t^K(f)(\cdot)$ not only
as a distribution but also as a locally 
integrable function.
%to handle 
%point-wise values of $T_t^K(f)$. 
In general, without such an additional cancellation condition, if $\alpha\geq1$,
then $T_t^K(f)(x)$ is not well-defined even for $f\in C_c^\infty$.
 % In this paper, any  integral in space should be understood in the sense of
% principal-value.
In Lemma \ref{lem_property_kernel}
and Lemma
\ref{lem_fractional_laplacian}, it will be shown  that 
as long as the corresponding kernel $K$ satisfies the $weak$-$(*)$-kernel condition,
the operator $T^K_t$
is well defined, and 
$T^K_t(f)$ is a locally integrable function for
some class of functions $f$.

\end{rem}
\begin{rem}
Let the
kernel $K$ satisfy the $weak$-$(*)$-kernel condition
 for some $\alpha\geq1$. Then
we can combine the two conditions  \eqref{cond_bounds} and \eqref{cond_cancell}
in order to get an estimate of the integral in \eqref{cond_cancell} for all $s\in(0,\infty)$.
Indeed, the  condition \eqref{cond_bounds}
 implies that, for $s\in[s_0,\infty)$,
\begin{equation*}%\label{cond_candcell_all}
%\Big| \int_{S^{N-1}} 
%k(t,x,s\sigma)\sigma d\sigma\Big|\leq 
 \int_{S^{N-1}}\Big| 
k(t,x,s\sigma) \Big|d\sigma\leq 
%N\cdot V_N \cdot\Lambda\cdot(1+s^\omega)\leq
%[N\cdot V_N \cdot\Lambda\cdot s_0^{-\nu} ]\cdot %s^\nu\cdot(1+s^\omega)
C\Lambda\cdot(1+s^\omega)\leq
(C\Lambda\cdot s_0^{-\nu} )\cdot s^\nu\cdot(1+s^\omega).
\end{equation*} 
Thus, together with the condition \eqref{cond_cancell},
we have, for $s\in(0,\infty)$,
\begin{equation}\label{cond_candcell_all_s}
\Big| \int_{S^{N-1}} 
k(t,x,s\sigma)\sigma d\sigma\Big|\leq \bar{\tau}
\cdot s^\nu\cdot(1+s^\omega)
\end{equation} where $\bar{\tau}:=\max\{\tau,(C\Lambda\cdot s_0^{-\nu})\}$.\\
\end{rem}
\begin{rem}\label{examples}
%Several conditions makes the cancellation condition
% \eqref{cond_cancell} hold for the case %$\alpha\geq1$. \\
 We present some typical examples satisfying  either
the $(*)$-kernel condition or
the $weak$-$(*)$-kernel condition.\\
 
% the local H\"{o}lder continuity condition \eqref{cond_Holder}
%or the cancellation condition
% \eqref{cond_cancell}.\\
 
  (I) For the  simplest example,
if $K:=c_\alpha/|x-y|^{N+\alpha}$ (i.e. $k:=c_\alpha$), then the equation \eqref{main_lin_eq} becomes
the fractional heat equation (some regularity
results can be found in Caffarelli and   Figalli \cite{caffarelli_figalli}).
%and the integral in \eqref{cond_cancell} vanishes. 
%(Note: $c_\alpha\leq C\cdot(2-\alpha)$) 
This kernel satisfies the  $(*)$-kernel condition.
%for any $\alpha\in(0,2)$. 
Indeed,
\eqref{cond_symm} and \eqref{cond_bounds} are trivial.
For $\alpha\geq1$, since $k$ is a constant function,
%Thus,
%\eqref{cond_cancell}
\eqref{cond_Holder}
 holds for
% $\nu=1$
 any $\nu\in(\alpha-1,1)$
% such that $ (\alpha-1)<\nu<1$
 %works
 % together 
  with $\tau=0$ and $s_0=\infty$.\\   
   
  (II) One may assume that the kernel has the form
  not of $K(t,x,y)$
  but of $K(t,x-y)$ (for more general cases, we refer to Silvestre \cite{silve:fractional_like}). Then the natural symmetry we would impose to the kernel is
  $K(t,x-y)=K(t,y-x)$,
  % (i.e. $k(t,z)=k(t,-z)$), 
  which implies \eqref{cond_symm} directly. This $K$ holds 
  the $weak$-$(*)$-kernel condition 
  for any $\alpha\in(0,2)$ once 
  we assume the bounds condition \eqref{cond_bounds}.
Indeed,   for $\alpha\geq1$,
  % the condition \eqref{cond_cancell} for $\alpha\geq1$
  %is trivial because
   the integral in \eqref{cond_cancell} 
  is  always zero
  due to the cancellation
    from
   the symmetry
 (i.e. % $\nu=1$ 
 any $\nu\in(\alpha-1,1)$
  with $\tau=0$ and $s_0=\infty$ works).\\  
\end{rem}

Here is our main theorem about persistence of H\"{o}lder continuity.
\begin{thm}\label{main_thm}
Let 
$\{{\alpha},\zeta,\omega,
%{\Lambda^{-1}},
\Lambda\}$ 
(for the case $\alpha\geq1$, 
$\{{\alpha},\zeta,\omega,
%{\Lambda^{-1}},
\Lambda,\nu,s_0,\tau\}$)
be a set of the parameters in Definition \ref{def_parameter}. 
Then there exist two constants  $\beta>0$ and $C>0$ with the following two properties $(I)$ and $(II)$:\\

$(I)$. Let  ${w}_0\in (L^1\cap L^\infty)(\mathbb{R}^N)$ be a given function
and let $0<T\leq\infty$.
Let $K$ satisfy the $(*)$-kernel condition
on $[0,T]$
(see Definition \ref{def_parameter}).
Then there exists a   weak solution $w$
of \eqref{main_lin_eq} on $(0,T)$
satisfying the following estimates for a.e.  $t\in(0,T)$:
\begin{equation}\label{eq_C_beta_main_thm}
      \|{w}(t,\cdot)\|_{C^\beta(\mathbb{R}^N)}
\leq C 
%\cdot (e^{at}+1)
\cdot\|{w}_0\|_{C^\beta(\mathbb{R}^N)}
\quad\mbox{ if } w_0\in C^\beta(\mathbb{R}^N), 
     \end{equation}
\begin{equation}\label{eq_L_infty_main_thm}
      \|{w}(t,\cdot)\|_{C^\beta(\mathbb{R}^N)}
\leq C \cdot{\max\{1,\frac{1}{t^{\beta/\alpha}}\}}\cdot\|{w}_0\|_{L^\infty(\mathbb{R}^N)}, \quad\mbox{ and}
     \end{equation}
\begin{equation}\label{eq_L_1_main_thm}
      \|{w}(t,\cdot)\|_{C^\beta(\mathbb{R}^N)}
\leq C \Big(
\|{w}_0\|_{L^\infty(\mathbb{R}^N)}+
{\max\{1,\frac{1}{t^{(N+\beta)/\alpha}}\}}\cdot\|{w}_0\|_{L^1(\mathbb{R}^N)}\Big).\\
     \end{equation}
     
$(II)$. Let  ${w}_0\in C^\infty(\overline{\mathbb{R}^N})$
 be a given  function
and let $0<T\leq\infty$. 
Let $K$ satisfy the $weak$-$(*)$-kernel condition
on $[0,T]$
(see Definition \ref{def_parameter2}). In addition, we assume
\begin{equation}\begin{split}\label{smooth_assump_lin}
%\odot
%&\mbox{ Smoothness: }\quad
 &k(\cdot,\cdot,\cdot)
\in C^\infty_{t,x,z}\Big([0,T]\times\overline{\mathbb{R}^N}\times\overline{\mathbb{R}^N}\Big).
%&\mbox{ with } \||k|+|\partial_t k|\|_{L^\infty(0,T;W^{b,\infty}(\mathbb{R}^3))}<\infty\mbox{  for some integer  } b\geq \frac{N}{2}.
\end{split}\end{equation}
Suppose that
  ${w}\in L^\infty(0,T;L^2(\mathbb{R}^N))$ is a smooth solution of \eqref{main_lin_eq} 
  on $[0,T)\times\mathbb{R}^N$
for the initial data 
${w}_0$. 
%for some $0<T\leq\infty$ 
%with the kernel $K$.
Then, $w$ satisfy
 \eqref{eq_C_beta_main_thm},
 \eqref{eq_L_infty_main_thm}, and 
 \eqref{eq_L_1_main_thm}  for any $t\in(0,T)$.

\end{thm}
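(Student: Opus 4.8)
The plan is to follow the Kiselev--Nazarov strategy of testing the equation against a carefully chosen family of test functions that behaves like an approximate dual of $C^\beta$. Concretely, for a fixed point $x_0\in\mathbb{R}^N$ and a fixed radius $r>0$, one introduces a test function $\varphi=\varphi_{x_0,r}$ which is nonnegative, has unit mass, is supported in a ball of radius $\sim r$ around $x_0$, and satisfies pointwise bounds $\varphi\lesssim r^{-N}$ together with a ``tail'' estimate for its translates. The quantity to track is the dual expression
\begin{equation*}
  \mathcal{D}(t)=\sup_{x_0,\,r,\,\psi}\ \frac{1}{r^\beta}\int_{\mathbb{R}^N} w(t,x)\,\psi(x)\,dx,
\end{equation*}
where $\psi$ ranges over differences (or suitable averages) of such $\varphi$'s normalized so that $\mathcal{D}(0)\lesssim\|w_0\|_{C^\beta}$ (and, for the $L^\infty$ and $L^1$ versions, $\mathcal D(0)$ is controlled by $\|w_0\|_{L^\infty}$ or $\|w_0\|_{L^1}$ after waiting a time $t$). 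The point is that $\mathcal D(t)\lesssim\|w(t,\cdot)\|_{C^\beta}$ and conversely, so a differential inequality for $\mathcal D$ yields the theorem.

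The key steps, in order, are: (1) Record the properties of the test-function class and show the two-sided comparison between $\mathcal D(t)$ and the $C^\beta$ seminorm of $w(t,\cdot)$; this uses the $L^1$ and $L^\infty$ bounds that come from the energy/maximum-principle structure of \eqref{main_lin_eq}, which in case $(II)$ is immediate from smoothness and the assumption $w\in L^\infty(0,T;L^2)$. (2) Differentiate $\int w(t,x)\varphi(x)\,dx$ in time, substitute \eqref{main_lin_eq}, and use the symmetry \eqref{cond_symm} to symmetrize the double integral, obtaining
\begin{equation*}
  \frac{d}{dt}\int w\,\varphi\,dx
  =\tfrac12\iint [w(t,y)-w(t,x)][\varphi(y)-\varphi(x)]K(t,x,y)\,dy\,dx ;
\end{equation*}
this is the step where the cancellation condition \eqref{cond_cancell} (for $\alpha\ge1$) is needed so that the operator $T^K_t\varphi$ makes sense as a locally integrable function --- here I would invoke Lemma~\ref{lem_property_kernel} and Lemma~\ref{lem_fractional_laplacian}. (3) Estimate $T^K_t\varphi$ using the upper bound in \eqref{cond_bounds}, splitting into the near region $|x-y|\lesssim r$ (where the $|x-y|^{-N-\alpha}$ singularity is integrated against the second-order vanishing of $\varphi(y)-\varphi(x)$, or against \eqref{cond_candcell_all_s} when $\alpha\ge1$) and the far region (where one uses the decay of $\varphi$ and the $\Lambda(1+|z|^\omega)$ growth, with $\omega<\alpha$ ensuring convergence). (4) Feed these bounds back: the near-diagonal part produces a dissipative gain of order $r^{-\alpha}$ times the ``oscillation of $w$ at scale $r$'', which is exactly what is captured by $\mathcal D$, while the interaction with coarser scales is absorbed; balancing these yields
\begin{equation*}
  \frac{d}{dt}\,\mathcal D(t)\le -c\,\mathcal D(t)^{1+\alpha/\beta}\,\big(\text{something}\big)+ C,
\end{equation*}
or, for the pure persistence statement \eqref{eq_C_beta_main_thm}, simply $\tfrac{d}{dt}\mathcal D\le C\,\mathcal D$ on bounded time intervals and a genuine decay once $\mathcal D$ is large relative to $\|w\|_{L^\infty}/r^\beta$. (5) Integrate the ODE and translate back to the three stated estimates, choosing $\beta$ small enough (depending only on the parameters $\alpha,\zeta,\omega,\Lambda,\nu$) that the constants in step (3) close.

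The main obstacle I anticipate is step (3)--(4): controlling the contribution of $T^K_t\varphi$ from the region where $|x-y|$ is comparable to or larger than $r$ but where the lower bound $\Lambda^{-1}\mathbf 1_{|x-y|\le\zeta}$ forces a genuine dissipative interaction between the scale-$r$ bump and all coarser scales of $w$. Capturing that as a favorable sign in the differential inequality for $\mathcal D$ --- rather than just as an error --- is the crux of the Kiselev--Nazarov method and is where the choice of the test-function family (in particular how $\psi$ is built as a difference of $\varphi$'s at nearby centers) has to be made precisely. A secondary technical point is that, for $\alpha\ge1$, one must handle the principal-value definition of $T^K_t\varphi$ with only the weak cancellation \eqref{cond_cancell} (not Hölder regularity of $k$), so the symmetrization in step (2) and the near-diagonal estimate in step (3) have to be carried out via \eqref{cond_candcell_all_s} together with the local regularity of $\varphi$ itself, rather than any regularity of the kernel.
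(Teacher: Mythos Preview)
Your proposal misses the central mechanism of the Kiselev--Nazarov argument as it is used here. You keep the test function $\varphi$ fixed and differentiate $\int w(t)\varphi$ in time, hoping to extract a dissipative sign from the symmetrized double integral. But $\iint[w(y)-w(x)][\varphi(y)-\varphi(x)]K\,dy\,dx$ carries no sign in general, so the ``dissipative gain of order $r^{-\alpha}$'' you invoke in step~(4) is not there; pointwise bounds on $T^K_t\varphi$ only give you $|\frac{d}{dt}\int w\varphi|\lesssim r^{-\alpha}\|w\|_{L^\infty}$, which is an error term, not a damping term, and cannot close a differential inequality for $\mathcal D(t)$.

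The paper proceeds differently: by duality (Lemma~\ref{lem_dual}) one has $\int w(t)\varphi_0=\int w_0\,\varphi(t)$, where $\varphi$ solves the \emph{backward} equation with the time-reversed kernel. Thus the whole problem becomes: if $\varphi_0\in\mathcal U_r$ (mean-zero, $\int|\varphi||x-x_0|^\gamma\le r^\gamma$, $\|\varphi\|_{L^\infty}\le A r^{-N}$, $\|\varphi\|_{L^1}\le1$), show that $\varphi(s)\in(r/z)^\beta\mathcal U_{z}$ for a growing scale $z=z(r,s)$. This is Proposition~\ref{main_prop}. The dissipation now has a genuine source: the lower bound $\Lambda^{-1}\mathbf 1_{|x-y|\le\zeta}$ forces $\|\varphi(s)\|_{L^\infty}$ to decay (C\'ordoba--C\'ordoba argument at the maximum point, Lemma~\ref{lem_L_infty}) and, crucially using the mean-zero property, forces $\|\varphi(s)\|_{L^1}$ to decay as well (Lemma~\ref{lem_L_1}). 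The only growing quantity is the concentration $\int|\varphi||x|^\gamma$, which increases at rate $C_{\mathrm{conc}}A^{(\alpha-\gamma)/N}r^{-\alpha}$ (Lemma~\ref{lem_concent}, via Lemma~\ref{lem_fractional_laplacian} applied to $T^K_t(|\cdot|^\gamma)$). Because the $L^\infty$ decay rate carries a strictly larger power $A^{\alpha/N}$, one can choose $A$ large and then $\beta$ small so that the $L^\infty$ and $L^1$ gains absorb the concentration loss; this is the competition resolved in Step~5. Your proposal never evolves $\varphi$, never uses mean-zero to get $L^1$ decay, and therefore has no place where the lower bound of the kernel enters as a favorable sign. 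That is the missing idea.
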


%\begin{rem}\label{why_smooth_lin}
%The smoothness assumptions \eqref{smooth_assump_lin} and \eqref{smooth_assump_nonlin}
%can be removed if we assume that our solutions is a limit coming from smooth solutions for a 
%mollified $C^\infty$ kernel.

We concentrate our effort first to prove
the part $(II)$ of the above theorem in Section
\ref{sec_Preliminaries},
\ref{sec_Proof_of_main_prop}, and
\ref{sec_Proof_of_main_thm}. 
 %Indeed, 
%it is very important to observe that
%It is very important to observe that
%% to mention that
In fact, we will show the part $(II)$ carefully
to ensure that
  the two constants $C$ and $\beta$ 
  in the  conclusion of the part $(II)$
depend only on the parameters in Definition
\ref{def_parameter}. Thus,
      these  two constants $C$ and $\beta$
      depend neither on
 $T$ nor on any actual norms coming from % of $k\in C^\infty$ in 
the smoothness assumption \eqref{smooth_assump_lin}.
As a result,
 the part $(I)$, which will be proved
in Appendix, 
%becomes an easy consequence of
follows
 the part $(II)$ by a limit argument.
 Unfortunately, if $\alpha\geq1$, then we need
 the condition \eqref{cond_Holder},
 which is more restrictive than
 \eqref{cond_cancell}. \\

\begin{rem} More precisely, 
the conclusion of the part $(I)$ 
follows once we regularize the function $k$ in a proper way, which should keep all the parameters.
In short, since $k$ may not be bounded due to \eqref{cond_bounds},
we make it bounded first. Then take a convolution with a mollifier.
This process does not hurt the parameter set essentially if $\alpha<1$.
However for the case $\alpha\geq1$, the cancellation condition
\eqref{cond_cancell} is not preserved during the process. That is
the reason we 
impose 
the  $(*)$-kernel condition
to the part $(I)$ of Theorem \ref{main_thm} %$K$
instead of 
the $weak$-$(*)$-kernel condition.
%substitute \eqref{cond_cancell} by %\eqref{cond_Holder},
%which is  slightly more restrictive  than %\eqref{cond_cancell}.
\end{rem}
\begin{rem}
Thanks to the symmetry condition \eqref{cond_symm}, we use the following
 weak formulation of \eqref{main_lin_eq}:
 \begin{equation*}
 \int_{\mathbb{R}^N}(\partial_tw)(t,x)\eta(x) dx
 +\frac{1}{2}\iint_{\mathbb{R}^N\times\mathbb{R}^N}
 [w(t,x)-w(t,y)][\eta(x)-\eta(y)]K(t,x,y)dydx=0
\end{equation*} for $\eta(\cdot)\in C_c^\infty(\mathbb{R}^N)$ and for a.e. $t$
(e.g.  see \cite{ccv} or \cite{kassman_apriori}).\\
\end{rem}

%\section{An application to a non-linear problem}
As in \cite{ccv}, we
show how our result can be applied to a fully 
non-linear problem.
We introduce 
the following non-linear evolution problem:
\begin{equation}\label{main_non_lin_eq}
 \partial_t \theta(t,x)-
\int_{\mathbb{R}^N}\phi^\prime(\theta(t,y)-\theta(t,x))G(y-x)dy=0.
\end{equation} This equation can be considered as
the evolution problem coming from the Euler-Lagrange equation
for the variational integral 
\begin{equation*}
\iint_{\mathbb{R}^N\times\mathbb{R}^N}\phi(\theta(t,y)-\theta(t,x))G(y-x)dydx
\end{equation*} (for more
detailed explanation, see \cite{ccv}). This   non-linear problem 
can be found in 
Giacomin, Lebowitz,  and Presutti 
\cite{MR1661764}, or in the field of 
image processing (e.g. see 
Gilboa and Osher \cite{osher},
Lou,   Zhang,  Osher,  and Bertozzi \cite{MR2578033}).\\

We impose the following conditions to the equation \eqref{main_non_lin_eq}.\\

Let $0<\alpha<2$,  $0<\zeta\leq\infty$,
$0\leq\omega<{\alpha}$ and 
%$0<{\Lambda^{-1}}\leq\Lambda<\infty$
$1\leq\Lambda<\infty$
%(For the case $\alpha\geq1$, we need three more parameters $\nu,\mu$ and $M$
%such that $(\alpha-1)<\nu<1$, $0\leq\mu<\infty$ and $0\leq M<\infty$ ).
(for the case $\alpha\geq1$, we need two more parameters $\nu$  and $M$
such that $(\alpha-1)<\nu<1$, $0\leq M<\infty$, and $\nu+\omega<\min\{N,\alpha\}$).
Let  $\phi:\mathbb{R}\rightarrow [0,\infty)$ be
 an even function of class $C^2$ %\cap (C^b\cap W^{b,\infty})$ for some integer $b\geq(N/2)$
satisfying \begin{equation*}
            \phi(0)=0 \quad\mbox{ and }\quad \sqrt{{\Lambda^{-1}}}
\leq \phi^{\prime\prime}(x)\leq  \sqrt{\Lambda}, \quad x\in\mathbb{R}.
           \end{equation*}
We assume that the kernel $G:\mathbb{R}^N/\{0\}\rightarrow[0,\infty)$ satisfies
$G(-x)=G(x)$ and 
\begin{equation}\label{cond_bounds_kernel_nonlinear}
       %     G(-x)=G(x) \mbox{ and } 
\sqrt{{\Lambda^{-1}}}\cdot\mathbf{1}_{|x|\leq \zeta}\leq G(x)\cdot|x|^{N+{\alpha}}
\leq \sqrt{\Lambda}\cdot(1+|x|^\omega) \mbox{ for   } x\in\mathbb{R}^N/\{0\}.
\end{equation}
%For the case $\alpha\geq1$, we assume two more conditions
%\begin{equation}
% \begin{cases}
% & \varphi^{\prime\prime}\in C^{\nu} \mbox{ with }
%[\varphi^{\prime\prime}]_{C^{\nu}(\mathbb{R})}\leq \mu \mbox{ and }\\
%& \|\nabla{\theta}_0\|_{L^\infty(\mathbb{R}^N)} \leq M.
%\end{cases}
%\end{equation}
%For convenience, we define a function $g$ 
%by $g(x)=G(x)\cdot|x|^{N+{\alpha}}$. \\
%In addition, we assume that $g(\cdot)$ and $\phi(\cdot)$ are smooth:
%% in the sense that 
%\begin{equation}\label{smooth_assump_nonlin}
%g\in C^\infty(\overline{\mathbb{R}^N}) \mbox{ and } \phi\in C^\infty(\overline{\mathbb{R}}).
%\end{equation}
%$g\in (C^b\cap W^{b,\infty})(\mathbb{R}^N)$
%for some integer $ b\geq \frac{N}{2}$.\\
Let  ${\theta}_0\in (W^{1,1}\cap
W^{1,\infty}
 %C^{1,\beta}
 )(\mathbb{R}^N)$ be a
 given
 % smooth
   function. % and $0<T\leq\infty$.  
For the case $\alpha\geq1$, we assume further
\begin{equation*}
  \phi^{\prime\prime}\in C^{\nu}(\mathbb{R}) \mbox{ with }
[\phi^{\prime\prime}]_{C^{\nu}(\mathbb{R})}\cdot
 \|\nabla{\theta}_0\|^{\nu}_{L^\infty(\mathbb{R}^N)} \leq M.
\end{equation*}
%$\varphi^{\prime\prime}\in C^{\nu}$ and 
%$[\varphi^{\prime\prime}]_{C^{\nu}(\mathbb{R})}\leq \mu$.

\begin{rem}
The upper bound \eqref{cond_bounds_kernel_nonlinear} for $G(\cdot)$  is 
$\sqrt{\Lambda}\cdot(1+|x|^\omega)$ and
it is more flexible than that of  \cite{ccv},
where just $\sqrt{\Lambda}$ was used as the upper bound.\\
\end{rem}

Following the approach of \cite{ccv}, we present
 the following important consequence of 
 the part $(II)$ of Theorem 
 \ref{main_thm}.  
 %It says that some class of 
%non-linear parabolic integral equations has 
%\textit{a priori} estimates $ \nabla\theta \in L^\infty_tC_x^{\beta}$.
 \begin{thm}\label{main_thm_nonlinear}

We have two constants $\beta>0$ and $C>0$
which depend only on the above parameters, % set $\{\alpha,{\Lambda^{-1}},\Lambda,\zeta\}$,
and there exists a global-time weak solution  ${\theta}
$
%\in L^\infty(0,\infty;
%C^{1,\beta}(\mathbb{R}^N)
%)$ 
of the equation \eqref{main_non_lin_eq}
with the following estimates for  a.e. $t\in(0,\infty)$:
\begin{equation}\label{c_1,beta}
      \|\nabla{\theta}(t,\cdot)\|_{C^\beta(\mathbb{R}^N)}
\leq C 
%\cdot (e^{at}+1)
\cdot\|\nabla{\theta}_0\|_{C^\beta(\mathbb{R}^N)} \quad\mbox{ if }\quad\nabla\theta_0\in {C^\beta(\mathbb{R}^N)}  , 
     \end{equation}
\begin{equation*}
      \|\nabla{\theta}(t,\cdot)\|_{C^\beta(\mathbb{R}^N)}
\leq C \cdot{\max\{1,\frac{1}{t^{\beta/\alpha}}\}}\cdot
\|\nabla{\theta}_0\|_{L^\infty(\mathbb{R}^N)}, \quad\mbox{ and}
     \end{equation*}
\begin{equation*}
      \|\nabla{\theta}(t,\cdot)\|_{C^\beta(\mathbb{R}^N)}
\leq C \Big(
\|\nabla{\theta}_0\|_{L^\infty(\mathbb{R}^N)}+
{\max\{1,\frac{1}{t^{(N+\beta)/\alpha}}\}}\cdot\|\nabla{\theta}_0\|_{L^1(\mathbb{R}^N)}\Big).
     \end{equation*}

\end{thm}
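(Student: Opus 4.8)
The plan is to reduce the nonlinear problem \eqref{main_non_lin_eq} to the linear problem \eqref{main_lin_eq} by differentiating in $x$, exactly as in \cite{ccv}, and then apply the part $(II)$ of Theorem \ref{main_thm}. First I would set $w := \nabla\theta$ (a vector field, so the linear theory is applied componentwise) and differentiate \eqref{main_non_lin_eq} with respect to $x$. Since $\phi$ is even, $\phi'$ is odd and $\phi''$ is even; differentiating the nonlinear term and using the antisymmetry of the difference quotient together with $G(-x)=G(x)$, one obtains that $w$ solves, at least formally, an equation of the form \eqref{main_lin_eq} with the \emph{solution-dependent} kernel
\begin{equation*}
 K_\theta(t,x,y) := \phi''\bigl(\theta(t,y)-\theta(t,x)\bigr)\,G(y-x).
\end{equation*}
The first task is to verify that $K_\theta$ satisfies the $weak$-$(*)$-kernel condition for the parameter set $\{\alpha,\zeta,\omega,\Lambda,\nu,s_0,\tau\}$ (with $s_0=\infty$ and $\tau$ controlled by $M$ when $\alpha\geq1$). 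Symmetry \eqref{cond_symm} is immediate from $G(-x)=G(x)$ and evenness of $\phi''$. The bounds \eqref{cond_bounds} follow by multiplying the hypotheses $\sqrt{\Lambda^{-1}}\le\phi''\le\sqrt{\Lambda}$ and \eqref{cond_bounds_kernel_nonlinear}. For the cancellation condition \eqref{cond_cancell} when $\alpha\geq1$, with $k_\theta(t,x,z)=\phi''(\theta(t,x+z)-\theta(t,x))\,G(z)|z|^{N+\alpha}$, one uses the mean value theorem $|\theta(t,x+z)-\theta(t,x)|\le\|\nabla\theta(t,\cdot)\|_{L^\infty}|z|$ together with $\phi''\in C^\nu$ to estimate $|k_\theta(t,x,s\sigma)-k_\theta(t,x,-s\sigma)|$ on the sphere; the hypothesis $[\phi'']_{C^\nu}\|\nabla\theta_0\|_{L^\infty}^\nu\le M$ — combined with the $L^\infty$ maximum principle $\|\nabla\theta(t,\cdot)\|_{L^\infty}\le\|\nabla\theta_0\|_{L^\infty}$ — yields exactly the $s^\nu$ bound with $\tau$ proportional to $M$ (this is the same hemisphere-splitting computation as in Remark \ref{rem_really_weaker}).

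Next I would construct the solution by an approximation scheme: regularize $\phi$, $G$, and $\theta_0$ so that \eqref{smooth_assump_lin} holds and standard parabolic-type well-posedness gives a smooth solution $\theta^\varepsilon\in L^\infty(0,\infty;L^2)$ of the (regularized) nonlinear equation, with $\nabla\theta^\varepsilon\in L^\infty(0,\infty;L^2)$ following from energy estimates for the variational integral. Applying part $(II)$ of Theorem \ref{main_thm} to each component of $w^\varepsilon=\nabla\theta^\varepsilon$ with kernel $K_{\theta^\varepsilon}$ gives the three estimates \eqref{eq_C_beta_main_thm}--\eqref{eq_L_1_main_thm} for $\nabla\theta^\varepsilon$ with constants $\beta,C$ depending only on the parameters — crucially \emph{not} on $\varepsilon$, since those constants are independent of $T$ and of the smoothness norms in \eqref{smooth_assump_lin}. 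Then I pass to the limit $\varepsilon\to0$: the uniform $C^\beta$ bound gives compactness (Arzel\`a--Ascoli locally in $x$, plus the $L^1\cap L^\infty$ control), producing a weak solution $\theta$ of \eqref{main_non_lin_eq} inheriting \eqref{c_1,beta} and its two companions for a.e.\ $t$.

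The main obstacle is the rigorous justification that the formal differentiation is legitimate and that the limiting object is genuinely a weak solution of the nonlinear equation — the nonlinear term $\phi'(\theta(t,y)-\theta(t,x))G(y-x)$ must be shown to converge in an appropriate sense under only the compactness we have, which requires pairing the $C^\beta$ bound on $\nabla\theta^\varepsilon$ with the decay $G(y-x)\le\sqrt{\Lambda}(1+|y-x|^\omega)|y-x|^{-N-\alpha}$ to get integrability near the diagonal (where one uses $|\phi'(\theta^\varepsilon(y)-\theta^\varepsilon(x))|\lesssim|x-y|$ from $\phi'(0)=0$ and Lipschitz control) and at infinity (where $\phi'$ grows at most linearly and $\nabla\theta^\varepsilon\in L^1$ helps). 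This is essentially the argument of \cite{ccv}; the only genuinely new input is that our part $(II)$ tolerates the slightly more general upper bound $\sqrt{\Lambda}(1+|x|^\omega)$ in \eqref{cond_bounds_kernel_nonlinear}, so the verification of the $weak$-$(*)$-kernel condition for $K_\theta$ must be done with that $\omega$-term carried through, which is routine but must be checked in the cancellation estimate above.
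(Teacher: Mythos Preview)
Your proposal is correct and follows essentially the same route as the paper: differentiate \eqref{main_non_lin_eq} to obtain \eqref{main_lin_eq} for $w=D_e\theta$ with kernel $K_\theta(t,x,y)=\phi''(\theta(t,y)-\theta(t,x))G(y-x)$, verify the $weak$-$(*)$-kernel condition (including the hemisphere-splitting cancellation argument when $\alpha\ge1$), apply part $(II)$ of Theorem \ref{main_thm}, and recover the general case by regularizing $\phi,G,\theta_0$ and passing to the limit. One small correction: the cancellation computation actually yields a bound $C\,M\sqrt{\Lambda}\,s^\nu(1+s^\omega)$, so you should take $s_0=1$ (not $s_0=\infty$) to absorb the factor $(1+s^\omega)\le 2$; the paper does exactly this.
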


\begin{comment}
\begin{rem}\label{why_smooth}
%The smoothness assumptions \eqref{smooth_assump_lin} and \eqref{smooth_assump_nonlin}
%can be removed if we assume that our solutions is a limit coming from smooth solutions for a 
%mollified $C^\infty$ kernel.
Like  the remark \ref{why_smooth_lin}, the smoothness 
assumption \eqref{smooth_assump_nonlin} is not essential
in the sense that $C$ and $\beta$ depends only on the parameters
and they are independent of actual norms from
\eqref{smooth_assump_nonlin}. % For example,
%under all the assumptions without \eqref{smooth_assump_nonlin},
% the above estiamtes of the conclusion holds 
%for a weak solution constructed in the theorem 2.1 of \cite{ccv}.
\end{rem}
\end{comment}
%The proof of the above theorem \ref{main_thm_nonlinear} will be given in the appendix.\\

The main idea of the above theorem \ref{main_thm_nonlinear} is the following:
First, we regularize $\theta_0$, $G$, and
$\phi$ in a proper way so that we obtain
a sequence of smooth solutions of \eqref{main_non_lin_eq}. Then, 
 we take a derivative $(w:=D_e\theta)$ to the non-linear equation
\eqref{main_non_lin_eq},
and  we freeze some coefficients.
 As a result of this process,
we obtain the linear equation \eqref{main_lin_eq}
% in terms of
%$D_e\theta$ 
together with the $weak$-$(*)$-kernel condition
on the $K$
satisfying \eqref{smooth_assump_lin}. Thus, we can use the conclusion
of the part $(II)$ of Theorem \ref{main_thm}.
Finally, we extract a weak solution by a limit 
argument.
This
 proof will be given in Appendix.\\

Now we want to explain the main idea %, which will
%be used in the
 of the part $(II)$ of
 %the proof of 
 Theorem \ref{main_thm}, which is the heart of this paper.
As mentioned earlier, our proof 
%of Theorem\ref{main_thm}
 follows the spirit of the paper \cite{variation_kiselev}. 
 First, thanks to the duality of the equation \eqref{main_lin_eq}
 from the symmetry condition \eqref{cond_symm}, we can focus only on  
% lead us to focusing
 the evolution of  $\mathcal{U}_r$,
 a class of test functions,
 which is related to the dual space of the H\"{o}lder space $C^\beta$ (see
 Definition \ref{def_u_r} of $\mathcal{U}_r$, Lemma
\ref{lem_c_beta}, and Lemma \ref{lem_dual}).
% A good explanation
%of 
%the relation between $C^\beta$ and $\mathcal{U}_r$
%in terms of local Hardy space
%can be found in \cite{chamorro}. 
%The class of test functions 
%should be neither too wide
%nor too narrow.
In this paper, we take the same definition of  the class $\mathcal{U}_r$
from the paper \cite{variation_kiselev}
%which was used in \cite{variation_kiselev},
 while
other classes can be found in Dabkowski
 \cite{eventual_dabkowski} and
 Chamorro \cite{chamorro}. In particular,
the class introduced in \cite{eventual_dabkowski} is quite different from that of 
\cite{variation_kiselev} and it was successfully used to obtain
eventual regularity of the super-critical surface quasi-geostrophic (SQG) equation.\\

Second, we prove the short-time evolution of 
test functions (Proposition \ref{main_prop}). In order to obtain
it, we need to manage the competition
(refer to %see 
%STEP 5
%in Section \ref{sec_Proof_of_main_prop} or 
Remark \ref{rem_competition}) between
the $L^p$ condition and the concentration condition.
The former condition, which can be proved from the lower bound 
${\Lambda^{-1}}\cdot\mathbf{1}_{|z|\leq \zeta}$,
of the kernel % (regularization effect),
has a regularization effect 
(Lemma \ref{lem_L_infty}, Lemma \ref{lem_L_1})  as
 a diffusion term in usual PDEs does. However, the latter condition  comes from the upper bound 
$\Lambda\cdot(1+ |z|^\omega)$ 
of the kernel and this upper bound
plays a similar role as a source term in usual PDEs  
(Lemma \ref{lem_concent}).\\

In addition, since the length of the time interval
coming from the conclusion of Proposition
\ref{main_prop} is proportional to
 $r^\alpha$ where $r$ is the parameter of $\mathcal{U}_r$,
 it should be verified that
we can repeat the short-time evolution 
(Proposition \ref{main_prop}) as many times as we want
in order to reach any fixed time
(refer to Remark \ref{rem_repeat}).\\
% (see Section \ref{sec_Proof_of_main_thm}).\\

For the case $\alpha<1$, the main difficulty
is to handle both lower and upper bounds 
\eqref{cond_bounds} of the kernel:
in particular, both the finite size $\zeta$ of support of the lower bound
and the term $(1+|z|^\omega)$ of the upper bound cause some troubles.
In order to cover the case $\alpha\geq1$,
we use  the cancellation condition \eqref{cond_cancell},
which is designed to cancel desirable amount of singularity at $x=y$ of the kernel. 
Then we can  interpret $T_t^K(f)$ as locally integrable
functions for some class of functions (see Lemma \ref{lem_fractional_laplacian},
  Lemma \ref{lem_property_kernel}).
This fact will be crucial to prove 
the concentration condition (Lemma \ref{lem_concent}).\\

We want to mention a few articles related to
the integral operator $T_t^K$ corresponding a kernel $K$.   
    For smooth bounded kernels, we may use
a theory of pseudo differential operators (e.g. Kumano-go \cite{Kumano-go},
Komatsu
 \cite{komatsu_continuity}), while
for measurable kernels, there exists a fundamental solution 
(see  \cite{komatsu_fundamental}). 
Also, we 
refer to
%want to mention the works in %of Kassmann with many colleagues
\cite{kassman_apriori} and 
Barlow,     Bass,  Chen,   and
              Kassmann \cite{BBCK}. Recently, in  Dyda and  Kassmann \cite{Dyda_kassmann},
assumptions of kernels have been extended in some geometrical sense.
If we focus on non-divergence case, we refer to
%\cite{silve:fractional_like}.\\
 \cite{caffa_silve_regular}. \\
%Lastly, the equation \eqref{main_non_lin_eq} is related to
%an image processing (e.g. see \cite{osher}).\\

As mentioned before, the following three
sections  \ref{sec_Preliminaries},
\ref{sec_Proof_of_main_prop}, and
\ref{sec_Proof_of_main_thm}
are dedicated to the proof of 
the part $(II)$ of the main theorem \ref{main_thm}. More precisely,
in Section \ref{sec_Preliminaries}, we 
introduce some definitions and few important lemmas.
%present the main proposition \ref{main_prop}
%together with several lemmas.
After that, we present and prove the main proposition \ref{main_prop}
in Section \ref{sec_Proof_of_main_prop}. 
Finally, the proof of the part $(II)$ of  Theorem \ref{main_thm} ends
in Section \ref{sec_Proof_of_main_thm}.
At the end of this paper,
 Appendix contains the proofs of 
 the part $(I)$ of
 Theorem \ref{main_thm} and Theorem \ref{main_thm_nonlinear}.

\section{Preliminaries and lemmas 
%the main proposition
}\label{sec_Preliminaries}

From now on, we fix  
a parameter set $\{{\alpha},\zeta,\omega,\Lambda\}$, 
%$\{{\alpha},\zeta,\omega,{\Lambda^{-1}},\Lambda\}$ 
which appears in Definition \ref{def_parameter}
(for the case $\alpha\geq1$, 
 %$\{{\alpha},\zeta,\omega,{\Lambda^{-1}},\Lambda,\nu,s_0,\tau\}$).
 $\{{\alpha},\zeta,\omega,\Lambda,\nu,s_0,\tau\}$).
%$\{{\alpha},{\Lambda^{-1}},\Lambda,\zeta\}$
%such that $0<{\alpha}<2$, $0<{\Lambda^{-1}}\leq\Lambda<\infty$ 
%(for $\alpha\geq1$,
%fix $\nu$ and $\tau$ such that $\nu>\alpha-1$ and $0<\tau<\infty$).
Also, suppose that $K$ satisfies the $weak$-$(*)$-kernel condition  in Definition \ref{def_parameter2}
on the parameter set 
together with the
smoothness assumption \eqref{smooth_assump_lin}.
For the case $\alpha<1$, we define and fix a constant $\gamma$ such that 
$0<\gamma<(\alpha-\omega)$ 
 while, for the case
$\alpha\geq1$, we take $\gamma$ to be 
$\max\{(\alpha-N),0\}<\gamma<
(\alpha-(\omega+\nu))$).\\
%$\max\{(\alpha-N),0\}<\gamma<
%\min\{(\alpha-\omega),(\alpha-\nu)\}$). 
%(\alpha-(\omega+\nu))$).\\

Before considering  a general $\zeta\in(0,\infty]$,
we will prove first the conclusion of
the part $(II)$ of  Theorem \ref{main_thm} for 
a fixed $\zeta=\zeta_0$ where  
\begin{equation}\label{def_zeta_0}
\zeta_0
:=\max\{\Big(
\frac{8}{V_N}\Big)^{1/N}, 2\cdot(11)^{1/\gamma}\}
\end{equation}
($V_N$ is the volume of  the unit ball %$S^{N-1}$
 in $\mathbb{R}^N$). %The reason for this definition of $\zeta_0$ 
%will be clear after all the proof.
This definition of $\zeta_0$ will help us to obtain enough regularization directly so that
%and to make 
the proof becomes more straightforward.
%will be clear after all the proof.  
%where $V_N$ is the volume of unit sphere $S^{N-1}$ in $\mathbb{R}^N$.
 Once we prove the part $(II)$ of Theorem \ref{main_thm} with $\zeta=\zeta_0$, a general proof
for any value ${\zeta}\in(0,\infty]$ will follow a scaling argument.
Indeed, the case $\zeta>\zeta_0$ is included in the case $\zeta=\zeta_0$ because
$\{|x-y|\leq\zeta_0\}\subset\{|x-y|\leq\zeta\}$. On the other hand,
for the case $\zeta<\zeta_0$, we define a scaling:
${w}^\epsilon(t,x)={w}(\epsilon^\alpha t,
\epsilon x)$  and $K^\epsilon(t,x,y)=K(\epsilon^\alpha t,
\epsilon x,\epsilon y)$. Thus, 
if ${w}$ satisfies \eqref{main_lin_eq}  on $[0,T]$ for a kernel $K$ with $\zeta<\zeta_0$, then ${w}^\epsilon$ is a solution on $[0,T/\epsilon^\alpha ]$
for the kernel $K^\epsilon$ with a new $\zeta=\zeta_0$
once we pick up  $\epsilon$ by $\epsilon=\zeta/ \zeta_0$. Then
we can apply the part $(II)$ of Theorem \ref{main_thm} for ${w}^\epsilon$ and the same result for ${w}$ follows.\\

%From now on,
% every constant which will appear in this paper may depend on such constants.\\
%\section{Main result}\label{sec_Main_result}

In this paper, we denote Sobolev spaces by $W^{k,p}$ and 
$H^k:=W^{k,2}$ for integers $k\geq0$ and
for $p\in[1,\infty]$
in the usual way. In addition,
 the symbol $\mathcal{S}$ is used to represent the Schwartz space in $\mathbb{R}^N$.

%\begin{defn}Let $0<\beta<1$. Then we define fractional
%Sobolev spaces $H^\beta(
%\mathbb{R}^N)$:\\
%\begin{equation}
%H^\beta(
%\mathbb{R}^N):=
%\{f\in L^2(
%\mathbb{R}^N)
%\quad |\quad 
%\int_{\mathbb{R}^N}\int_{\mathbb{R}^N}
%\frac{(f(x)-f(y))^2}{|x-y|^{N+2\beta}}
%dydx<\infty
%\}
%\end{equation} We define the norm
%\|f\|^2_{H^\beta(
%\mathbb{R}^N)}:=\|f\|^2_{L^2(
%\mathbb{R}^N)}+
%\int_{\mathbb{R}^N}\int_{\mathbb{R}^N}
%\frac{(f(x)-f(y))^2}{|x-y|^{N+2\beta}}dydx$. 
%\end{defn}

\begin{defn}
We say that a  function $f$ lies in $C^k(\mathbb{R}^d)$ for an integer $k\geq 0$
if $f$ is $k$-times differentiable in $\mathbb{R}^d$ and
all derivatives up to $k$ order are continuous,
while $f$ lies in $C^k(\overline{\mathbb{R}^d})$ if $f\in C^k(\mathbb{R}^d)$ and if $\nabla^l f $ are bounded 
for all integer $l$ such that $0\leq l \leq k$. In other words, $ C^k(\overline{\mathbb{R}^d})=C^k(\mathbb{R}^d)\cap W^{k,\infty}(\mathbb{R}^d).$

\end{defn}
\begin{defn}
We say that a bounded function $f$ lies in 
$C^\beta(\mathbb{R}^d)$ for $0<\beta<1$
if $\sup_{x,y}{|f(x)-f(y)|}/{|x-y|^\beta}$ is finite and
 we define the semi-norm
  $[f]_{C^\beta}:=\sup_{x,y}{|f(x)-f(y)|}/{|x-y|^\beta}$
and the norm $
\|f\|_{C^\beta}:=\|f\|_{L^\infty}+
[f]_{C^\beta}$. We also define the space $C^{k,\beta}(\mathbb{R}^d)$
by the norm $
\|f\|_{C^{k,\beta}}:=\|f\|_{W^{k,\infty}}+
[\nabla^k f]_{C^\beta}$
\end{defn}
%\begin{rem}
%By the above definition, a function in $C^1(\mathbb{R}^N)$
%may neither have a  bounded derivative nor is bounded itself.
%For example, in this paper we distiguish two spaces
%$(C^1\cap W^{1,\infty})(\mathbb{R}^N)\neq C^1(\mathbb{R}^N) $
%\end{rem}

It will be shown 
in Lemma \ref{lem_property_kernel} that the operator $T^K_t(f)$ is well-defined pointwise
for $f\in (C^2\cap L^1)(\mathbb{R}^N)$.
%, which will be shown
 Moreover
the operator  can be extended  to more general spaces.
For example, if $f$ is locally integrable and
$\int \frac{|f|}{1+|x|^{N+\alpha-\omega}}dx<\infty$,
%(for the case $\alpha\geq 1$,
%$\int \frac{|f|}{1+|x|^{N+\alpha-(\omega+\nu)}}dx<\infty$)
 then we can define $T^K_t(f)$ as an element
of $\mathcal{S}^\prime$ where %$\mathcal{S}$ is the
%Schwartz space in $\mathbb{R}^N$ and 
$\mathcal{S}^\prime$ is 
the dual of 
Schwartz space $\mathcal{S}$ (see also
Silvestre 
\cite{silve:fractional}).
%For deeper argument, we refer to % Silvestre 
%\cite{silve:fractional} while we do not need
%a full generalization in this paper. 
We will make use of
the following Lemma \ref{lem_fractional_laplacian}, which  says that
$T^K_t(|\cdot|^\gamma)$ is not only an element of $\mathcal{S}^\prime$
but also a locally integrable function with a desirable estimate. This fact %is crucial because it
will be used to obtain the concentration 
condition (Lemma
\ref{lem_concent}) for the evolution
of $\mathcal{U}_r$, which will be introduced in
Definition \ref{def_u_r}.
\begin{lem}\label{lem_fractional_laplacian}
%Let $w \in (0,{\alpha})$. Then 
%Let $0<\gamma<\alpha<1$. Then,
We have an estimate
\begin{equation*}
\Big|T^K_t\Big(|\cdot|^\gamma\Big)(x)\Big|\leq 
\begin{cases}& C\cdot
%\Lambda\cdot
 |x|^{{\gamma}-{\alpha}}\cdot(1+|x|^\omega),
\quad\mbox{ if } 0<\alpha<1,\\
% & C\cdot(\Lambda+\tau)\cdot |x|^{{\gamma}-{\alpha}}\cdot(1+|x|^\nu),
 & C\cdot
 %(\Lambda+\bar{\tau})\cdot
  |x|^{{\gamma}-{\alpha}}\cdot
 %(1+|x|^{{\nu}})\cdot(1+|x|^\omega),
 (1+|x|^{{\nu}+\omega}),
  \quad\mbox{ if } 1\leq\alpha<2.\\
\end{cases}
%\mbox{ for 
%every } x\neq 0 \mbox{ and for 
%every } t
%\Big|\int_{\mathbb{R}^N}[|y|^{\gamma}-|x|^{\gamma}]K(t,x,y)dy\Big|\leq C\cdot |x|^{{\gamma}-{\alpha}} \mbox{ for 
%every } x\neq 0
\end{equation*} %where $C$ depends only 
%on $\alpha$ and $\gamma$ for $0<\alpha<1$ and on $\alpha,\gamma$ and $\nu$
%for $1\leq\alpha<2$.

\end{lem}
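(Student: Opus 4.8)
The plan is to estimate $T^K_t(|\cdot|^\gamma)(x)$ directly from its principal-value definition, splitting the $y$-integration into three regions according to $|y-x|$ relative to $|x|$ and to $s_0$, and using the chosen range of $\gamma$ to guarantee absolute integrability in each region. Write $T^K_t(|\cdot|^\gamma)(x) = \int_{\mathbb{R}^N}[|x|^\gamma - |y|^\gamma]K(t,x,y)\,dy$ (principal value), and substitute $z = y-x$, so that using the associated function $k$ we must control $\int [|x|^\gamma - |x+z|^\gamma]\,k(t,x,z)\,|z|^{-N-\alpha}\,dz$. The natural split is: (a) the \emph{near} region $|z|\le |x|/2$, where $g(z):=|x|^\gamma-|x+z|^\gamma$ is smooth with $|g(z)|\lesssim |x|^{\gamma-1}|z|$ and $|D^2 g(z)|\lesssim |x|^{\gamma-2}$; (b) the \emph{intermediate/far} region $|z|\ge |x|/2$, where we merely use $|g(z)|\le |x|^\gamma + |x+z|^\gamma \lesssim |z|^\gamma$ (since $|x|\le 2|z|$ and $|x+z|\le 3|z|$).

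For the far region (b), the integrand is bounded by $|z|^\gamma \cdot \Lambda(1+|z|^\omega)\,|z|^{-N-\alpha}$, which is integrable at infinity precisely because $\gamma + \omega < \alpha$ (this is built into the definition of $\gamma$ in both cases, since $\gamma < \alpha - \omega$ when $\alpha<1$ and $\gamma < \alpha - \omega - \nu \le \alpha-\omega$ when $\alpha\ge1$); carrying out the radial integral over $|z|\gtrsim |x|$ yields a bound of the form $C|x|^{\gamma-\alpha}(1+|x|^\omega)$, which matches the asserted right-hand side in both cases. For the near region (a), when $0<\alpha<1$ we do not even need cancellation: $\int_{|z|\le|x|/2}|x|^{\gamma-1}|z|\cdot\Lambda(1+|z|^\omega)|z|^{-N-\alpha}\,dz \lesssim |x|^{\gamma-1}\cdot(|x|^{1-\alpha}+|x|^{1-\alpha+\omega}) = |x|^{\gamma-\alpha}(1+|x|^\omega)$, using $1-\alpha>0$ so the singularity at $z=0$ is integrable. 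When $1\le\alpha<2$ the first-order term is not integrable near $z=0$, so we subtract it off using symmetry: split the near ball further into $|z|\le \min(|x|/2,s_0)$ and the annulus $\min(|x|/2,s_0)\le |z|\le |x|/2$ (the latter only present when $s_0<|x|/2$, and there handled as an absolutely convergent piece bounded by $|x|^{\gamma-1}\cdot s_0^{\,?}$ which is absorbed). On the innermost ball, write $g(z)=g(0)+\nabla g(0)\cdot z + O(|x|^{\gamma-2}|z|^2)=-\gamma|x|^{\gamma-2}x\cdot z + O(|x|^{\gamma-2}|z|^2)$; the $|z|^2$ term is integrable against $|z|^{-N-\alpha}$ since $\alpha<2$, contributing $\lesssim |x|^{\gamma-2}\cdot |x|^{2-\alpha} = |x|^{\gamma-\alpha}$. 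The linear term, after integrating in the angular variable, is exactly where the cancellation condition \eqref{cond_cancell} enters: $\int_{|z|\le \rho} (x\cdot z)\,k(t,x,z)\,|z|^{-N-\alpha}\,dz = \int_0^\rho s^{-1-\alpha}\big(\int_{S^{N-1}} (x\cdot s\sigma)\,k(t,x,s\sigma)\,d\sigma\big)\,s^{N-1}\cdot s^{N}\cdot\frac{1}{s^N}$\,---\,more cleanly, $= |x|\int_0^\rho s^{-\alpha}\,\big(\int_{S^{N-1}} k(t,x,s\sigma)\,\hat x\cdot\sigma\,d\sigma\big)\,ds$, and \eqref{cond_cancell} bounds the inner integral by $\tau s^\nu$, so the $s$-integral converges because $\nu > \alpha-1$, giving $\lesssim |x|\cdot |x|^{\gamma-1}\cdot \rho^{\,\nu-\alpha+1}$ which after substituting $\rho = \min(|x|/2, s_0)$ produces the $|x|^{\gamma-\alpha}(1+|x|^{\nu+\omega})$-type bound (the $|x|^{\nu+\omega}$ and $s_0$-dependence emerging exactly as in Remark's computation and in the construction of $\bar\tau$ in \eqref{cond_candcell_all_s}).

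I expect the main obstacle to be the bookkeeping in the case $1\le\alpha<2$ when $s_0$ is small relative to $|x|$: the cancellation estimate \eqref{cond_cancell} is only available for $s\in(0,s_0)$, so one must patch together the small-scale regime (where cancellation kills the linear singularity) with the scales $s_0\le |z|\le |x|/2$ (where one falls back on $|g(z)|\lesssim |x|^{\gamma-1}|z|$ and the absolute bound on $k$), and verify that the resulting powers of $|x|$ assemble into $|x|^{\gamma-\alpha}(1+|x|^{\nu+\omega})$ uniformly — both for $|x|$ small and $|x|$ large, and uniformly in $t$ and $x$ as a parameter of $k$. A secondary point to be careful about is justifying that the principal-value limit exists (so that the pointwise formula for $T^K_t(|\cdot|^\gamma)$ makes sense), which again follows from the cancellation condition together with the $C^2$ regularity of $|\cdot|^\gamma$ away from the origin; this is essentially the content of the earlier Lemma~\ref{lem_property_kernel}, which may be invoked. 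The constant $C$ throughout depends only on $N$, $\alpha$, $\omega$, $\Lambda$, $\gamma$ (and $\nu$, $s_0$, $\tau$ when $\alpha\ge1$), as required.
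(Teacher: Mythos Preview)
Your approach is correct and essentially the same as the paper's: split at $|z|\sim|x|/2$, handle the far piece by the crude bound $|g(z)|\lesssim|z|^\gamma$ together with $\gamma+\omega<\alpha$, and on the near piece Taylor-expand $g$ and (for $\alpha\ge1$) use the angular cancellation to control the linear term. The paper streamlines the computation in two ways that dissolve the ``main obstacle'' you identify: it first rescales $\bar z=z/|x|$ so that the factor $|x|^{\gamma-\alpha}$ comes out immediately and the remaining integral is over $s\in(0,\infty)$ independent of $|x|$, and it invokes \eqref{cond_candcell_all_s} (valid for \emph{all} $s>0$, with the constant $\bar\tau$ absorbing the $s_0$-dependence) rather than splitting the near ball at $s_0$; so no separate bookkeeping for $s_0$ small relative to $|x|$ is needed.
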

\begin{rem}
Recall that $\gamma$ is a fixed constant such that $0<\gamma<\alpha-\omega$  (for $\alpha\geq1$,
$\max\{(\alpha-N),0\}<\gamma<
(\alpha-(\omega+\nu))$). 
%Also, the definition of 
%$\bar{\tau}$ are around \eqref{cond_candcell_all_s}.
\end{rem}
\begin{comment}
\begin{rem}
The following proof can be justified more rigorously by the following way:
First, we regularize $|\cdot|^\gamma$ by $\{f_n\}_{n=1}^\infty$. Then, it can be proved 
that
the above estimate holds for 
$(T^K_t)_{1/n}(f_n)(x)$,
which is
%Moreover, 
%it is 
% $(T^K_t)_{1/n}(|\cdot|^\gamma)$ is 
uniform Cauchy sequence 
as $n$ goes to $\infty$
%. Moreover, it converges uniformly 
in any compact set away from the origin. In this paper, we do not try to write the details of this procedure.
\end{rem}
\end{comment}
\begin{proof}
%Define $k(t,x,y)=K(t,x,y)|x-y|^{N+\alpha}$ Then $0\leq k(t,x,y) \leq\Lambda$.
 \begin{equation*}
  \begin{split}
  &\int_{\mathbb{R}^N}\Big[|x|^\gamma-|y|^\gamma\Big]K(t,x,y)dy=
\int_{\mathbb{R}^N}\Big[|x|^\gamma-|x+z|^\gamma\Big]K(t,x,x+z)dz\\
&=\int_{\mathbb{R}^N}\frac{|x|^\gamma-|x+z|^\gamma}{|z|^{N+\alpha}}k(t,x,z)dz\\
&=|x|^\gamma \int_{\mathbb{R}^N}\frac{1-|\frac{x}{|x|}+\frac{z}{|x|}|^\gamma}{|z|^{N+\alpha}}k(t,x,z)dz.
 \end{split}
 \end{equation*} Then we use the change of
 variables $\frac{z}{|x|}=\bar{z}$
 and the polar coordinate
 to get
 \begin{equation*}
  \begin{split}
&=|x|^{\gamma-\alpha} \int_{\mathbb{R}^N}\frac{1-|\frac{x}{|x|}
+\bar{z}|^\gamma}{|\bar{z}|^{N+\alpha}}k(t,x,|x|\bar{z})d\bar{z}\\
&=|x|^{\gamma-\alpha} \int_{S^{N-1}}\int_{0}^{\infty}\frac{1-|\frac{x}{|x|}
+s\sigma|^\gamma}{s^{1+\alpha}}k(t,x,|x|s\sigma)dsd\sigma\\
&=|x|^{\gamma-\alpha}\Big( \int_{S^{N-1}}\int_{0}^{1/2}\cdots dsd\sigma
+ \int_{S^{N-1}}\int_{1/2}^{\infty}\cdots dsd\sigma\Big)=|x|^{\gamma-\alpha}
\Big((I)+(II)\Big).\\
 \end{split}
 \end{equation*}
From the condition $\gamma+\omega<\alpha$, we have
 \begin{equation*}
  \begin{split}
\Big|(II)\Big|&\leq
\Lambda\int_{S^{N-1}}\int_{1/2}^{\infty}
\frac{(2+s^\gamma)(1+|x|^\omega s^\omega)}{s^{1+\alpha}}dsd\sigma\leq C
%\cdot\frac{1+|x|^\omega}{\alpha-(\gamma+\omega)}
(1+|x|^\omega). %\cdot\Lambda
 \end{split}
 \end{equation*}
On the other hand, from  Taylor expansion $1-|\frac{x}{|x|}
+s\sigma|^\gamma = -\gamma(\frac{x}{|x|}\cdot\sigma)s+R_\sigma(s)$
with an error estimate $|R_\sigma(s)|\leq Cs^2$ 
for $s\in[0,1/2]$ and $\sigma\in S^{N-1}$,
we have 
%from $\alpha<1$, 
\begin{equation*}
  \begin{split}
\Big|(I)&\Big|\leq 
\Big|\int_{S^{N-1}}\int_{0}^{1/2}
\gamma(\frac{x}{|x|}\cdot\sigma)\frac{s}{s^{1+\alpha}} 
k(t,x,|x|s\sigma) dsd\sigma\Big|\\
&\quad\quad\quad\quad\quad
+\Lambda\int_{S^{N-1}}\int_{0}^{1/2}
\frac{|R_\sigma(s)|(1+|x|^\omega s^\omega)}{s^{1+\alpha}}
 dsd\sigma\\
&\leq C \int_{0}^{1/2}
\frac{1}{s^{\alpha}}
\Big|\int_{S^{N-1}}
k(t,x,|x|s\sigma) \sigma d\sigma \Big| ds
+C\cdot\frac{1+|x|^\omega}{2-\alpha}\cdot\Lambda\\
&=C\cdot(III)+C\cdot(1+|x|^\omega).
%\frac{1+|x|^\omega}{2-\alpha}\cdot\Lambda
 \end{split}
 \end{equation*}
For the case $\alpha<1$, $(III)$ is bounded above by $C\cdot
%\Lambda\cdot N\cdot V_N\cdot
(1+|x|^\omega)$.\\
If $\alpha\geq 1$, we can use 
the condition 
\eqref{cond_candcell_all_s}, which is obtained from
  \eqref{cond_cancell}
and \eqref{cond_bounds},
together with $\nu>(\alpha-1)$:
\begin{equation*}
  \begin{split}
(III)&\leq
\int_{0}^{1/2}
\frac{1}{s^{\alpha}}
\bar{\tau} |x|^\nu s^\nu(1+|x|^\omega s^\omega) ds\leq
\bar{\tau} |x|^\nu \cdot
(1+|x|^\omega)
\int_{0}^{1/2}
\frac{1}{s^{\alpha-\nu}}
 ds\\
 &
\leq
C\cdot
%\bar{\tau} 
%\cdot\frac{(1+|x|^\omega)}{\nu-(\alpha-1)} 
(1+|x|^\omega)
 \cdot|x|^\nu .
 \end{split}
 \end{equation*}

\end{proof}
%\begin{rem}
%The exact fractional Laplacian case  $T^K_t=(-\Delta)^{-\alpha/2}$ for $\alpha<1$ is one example of the above lemma.
%%Recall that for $0<\gamma<\alpha<2$,
%% $|(-\Delta)^{{\alpha}/2}(|\cdot|^{\gamma})(x)|\leq C\cdot |x|^{{\gamma}-{\alpha}} $
%%up to our kernel $K$:\\
%%For $\alpha<1$, this is just one corollary of the above lemma.\\
%For this fractional Laplacian, we can show same result even for $2>\alpha\geq1$:
%%But for $\alpha\geq 1$, we need to 
%The idea is just to use the cancellation of the integral on the sphere of the first order
%term of taylor expansion. But this idea cannot be applied to our general kernel because
%we do not have such cancellation in general.
%% so that for our kernel $\alpha\geq 1$ case 
%%is not simple. (Moreover it's not defined well?)
%\end{rem}

We give, in the following lemma, some properties of the integral operator $T^K_t$ and the related
evolution equation \eqref{main_lin_eq}.
\begin{lem}\label{lem_property_kernel}
%Let $K$ satisfy the kernel condition on
%$\{{\alpha},
%{\Lambda^{-1}},\Lambda,\zeta
%\}$ 
%(for the case $\alpha\geq1$, $\{{\alpha},
%{\Lambda^{-1}},\Lambda,\zeta,\nu,\tau
%\}$).  Then, 
% Our kernels have following properties:\\
For any $f\in C^2({\mathbb{R}^N})\cap L^1(\mathbb{R}^N)$, 
$T^K_t(f)$ is well-defined pointwise. Moreover,
%For any $f,g\in C^2(\overline{\mathbb{R}^N})\cap L^1(\mathbb{R}^N)$, 
% $T^K_t(f)\in ???$ makes sense and
the following properties hold:\\ 
% Suppose $f$ is a solution for $K$. After scaling $f^{\Lambda^{-1}}(t,x)=f({\Lambda^{-1}}^{\alpha} t,{\Lambda^{-1}} x)$ 
%for ${\Lambda^{-1}}\leq 1$, then $f^{\Lambda^{-1}}$ solves for another kernel $K^{\Lambda^{-1}}$ which
%has the same kernel condition.
% $ ({\alpha},{\alpha^{\prime}},\Lambda,\xi)$\\
(I).  Duality of T:
\begin{equation}\label{duality_of_T}
 \int f(x)T^K_t(g)(x)dx=
\int T^K_t(f)(x)g(x)dx,
\end{equation} for $f\in C^2(\overline{\mathbb{R}^N})\cap L^1(\mathbb{R}^N)$ and either
$g\in C^2(\overline{\mathbb{R}^N})\cap L^1(\mathbb{R}^N)$
or $g(x)=|x|^\gamma$.\\
(II). Mean zero of T:
\begin{equation*}
 \int T^K_t(f)(x)dx=0,
\end{equation*} for $f\in C^2(\overline{\mathbb{R}^N})\cap L^1(\mathbb{R}^N)$.
\begin{comment}
(III). Comparability with  
the norm of $H^{\alpha/2}(
\mathbb{R}^N)$:
\begin{equation*}
 \frac{1}{C}\|f\|^2_{H^{\alpha/2}(
\mathbb{R}^N)}\leq 
\|f\|^2_{L^2(
\mathbb{R}^N)}+
%\int_{\mathbb{R}^N}\int_{\mathbb{R}^N}
%\frac{(f(x)-f(y))^2}{|x-y|^{N+2\beta}}k(t,x,y)dydx
\int_{\mathbb{R}^N}f\cdot T^K_t(f) dx
\leq C\|f\|^2_{H^{\alpha/2}(
\mathbb{R}^N)}
\end{equation*} where $C$ depends only on $\alpha$, ${\Lambda^{-1}}$
and $\Lambda$.
\end{comment}
\end{lem}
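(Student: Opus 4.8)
The plan is to prove \lemref{lem_property_kernel} by first establishing the pointwise well-definedness of $T^K_t(f)$ for $f\in C^2(\mathbb{R}^N)\cap L^1(\mathbb{R}^N)$, and then deducing the duality and mean-zero properties from it together with an application of Fubini's theorem. The starting point for well-definedness is to write, after the change of variables $y=x+z$,
\[
(T^K_t)_\epsilon(f)(x)=\int_{|z|\geq\epsilon}\frac{f(x)-f(x+z)}{|z|^{N+\alpha}}\,k(t,x,z)\,dz,
\]
and to split the region of integration into $\{\epsilon\leq|z|\leq 1\}$, $\{1\leq|z|\leq R\}$, and $\{|z|\geq R\}$ for a suitable $R$ (say $R=1$, absorbing the second region). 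On the far region $|z|\geq 1$ the integrand is controlled by $\frac{|f(x)|+|f(x+z)|}{|z|^{N+\alpha}}\Lambda(1+|z|^\omega)$, which is integrable in $z$ since $f\in L^\infty$ gives the $|f(x)|$ term integrability (as $\alpha-\omega>0$) and $f\in L^1$ handles the $|f(x+z)|$ term. On the near region, for $\alpha<1$ one uses the Lipschitz bound $|f(x)-f(x+z)|\leq \|\nabla f\|_{L^\infty}|z|$ together with $k\leq\Lambda(1+|z|^\omega)$ to dominate the integrand by $C|z|^{1-N-\alpha}$, which is absolutely integrable near the origin since $1-\alpha>0$; hence the limit as $\epsilon\to0$ exists by dominated convergence. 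For $\alpha\geq1$ the integrand $|z|^{1-N-\alpha}$ is no longer absolutely integrable, so instead I would use the second-order Taylor expansion $f(x+z)=f(x)+\nabla f(x)\cdot z+O(|z|^2)$ and split the near-region integral into the quadratic-remainder part (dominated by $C|z|^{2-N-\alpha}$, integrable since $\alpha<2$) and the linear part $-\nabla f(x)\cdot\int_{\epsilon\leq|z|\leq s_0}\frac{z}{|z|^{N+\alpha}}k(t,x,z)\,dz$; passing to polar coordinates $z=s\sigma$, this linear part becomes $-\nabla f(x)\cdot\int_\epsilon^{s_0}s^{-\alpha}\big(\int_{S^{N-1}}k(t,x,s\sigma)\sigma\,d\sigma\big)ds$, and the cancellation condition \eqref{cond_cancell} bounds the inner integral by $\tau s^\nu$, so the $s$-integral is dominated by $\tau\int_0^{s_0}s^{\nu-\alpha}ds<\infty$ because $\nu>\alpha-1$. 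This both shows the $\epsilon\to0$ limit exists and gives a pointwise bound.

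For part (I), duality, I would work with the regularized operators. Fix $f\in C^2(\overline{\mathbb{R}^N})\cap L^1$ and $g$ either in the same class or $g=|\cdot|^\gamma$. By the symmetry $k(t,x,y-x)=k(t,y,x-y)$, equivalently $K(t,x,y)=K(t,y,x)$ from \eqref{cond_symm}, one has for each fixed $\epsilon>0$
\[
\int f(x)\,(T^K_t)_\epsilon(g)(x)\,dx
=\iint_{|x-y|\geq\epsilon} f(x)\,[g(x)-g(y)]K(t,x,y)\,dy\,dx,
\]
and the double integral is absolutely convergent — on $|x-y|\geq 1$ by the $L^1\cap L^\infty$ (resp. polynomial growth of $|\cdot|^\gamma$, handled via $\gamma<\alpha-\omega$) bounds, and on $\epsilon\leq|x-y|\leq1$ by boundedness of the integrand on this compact-in-$z$ region after localizing $x$ appropriately — so Fubini applies. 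Relabeling $x\leftrightarrow y$ and using $K(t,x,y)=K(t,y,x)$ converts it to $\iint_{|x-y|\geq\epsilon} g(x)\,[f(x)-f(y)]K(t,x,y)\,dy\,dx$, which is $\int g(x)\,(T^K_t)_\epsilon(f)(x)\,dx$. Then I let $\epsilon\to0$; the passage to the limit under the outer integral is justified by dominated convergence, using that $(T^K_t)_\epsilon(f)(x)$ and $(T^K_t)_\epsilon(g)(x)$ converge pointwise (from the well-definedness argument above, and from \lemref{lem_fractional_laplacian} in the case $g=|\cdot|^\gamma$) and are dominated by integrable functions of $x$ — the decay $|T^K_t(f)(x)|\lesssim |x|^{-N-\alpha}(1+|x|^\omega)$ at infinity coming from the far-region estimate combined with $f\in L^1$, so the product with $g\in L^1$ or $g=|\cdot|^\gamma$ is integrable.

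For part (II), mean zero, the cleanest route is to take $g\equiv 1$ in the duality identity: formally $T^K_t(1)=0$ because $g(x)-g(y)=0$, so $\int T^K_t(f)(x)\cdot 1\,dx=\int f(x)\cdot T^K_t(1)(x)\,dx=0$. Since $g\equiv1$ is not in $L^1$, I would instead justify this by a truncation: let $g_R(x)=\chi(x/R)$ be a smooth cutoff equal to $1$ on $|x|\leq R$, apply the duality identity (valid for $g_R\in C^2(\overline{\mathbb{R}^N})\cap L^1$ after checking the absolute convergence as above), obtaining $\int T^K_t(f)(x)\,g_R(x)\,dx=\int f(x)\,T^K_t(g_R)(x)\,dx$, and let $R\to\infty$. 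On the left side dominated convergence gives $\int T^K_t(f)$ since $T^K_t(f)\in L^1$ (from the decay estimate and $f\in L^1$). On the right side I claim $T^K_t(g_R)(x)\to0$ pointwise with an $L^1_{loc}$-uniform bound: indeed $(T^K_t)_\epsilon(g_R)(x)=\int_{|z|\geq\epsilon}\frac{g_R(x)-g_R(x+z)}{|z|^{N+\alpha}}k(t,x,z)\,dz$, the integrand vanishes for $|z|\lesssim R$ (when $x$ is fixed and $R$ large), so $|T^K_t(g_R)(x)|\lesssim \int_{|z|\gtrsim R}|z|^{-N-\alpha}\Lambda(1+|z|^\omega)\,dz\to0$; combined with $f\in L^1$ this forces the right side to $0$. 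The main obstacle throughout is bookkeeping the Fubini/dominated-convergence justifications uniformly in $\epsilon$ — in particular producing, once and for all, an $\epsilon$-independent integrable majorant for $(T^K_t)_\epsilon(f)(x)$ as a function of $x$, which for $\alpha\geq1$ requires carefully combining the Taylor-remainder control near the origin with the cancellation condition \eqref{cond_cancell} and the far-field decay, rather than any single clean estimate.
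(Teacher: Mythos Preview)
Your proposal is correct and follows essentially the same route as the paper: the same near/far split with Taylor expansion (and the cancellation condition \eqref{cond_cancell} when $\alpha\geq1$) for well-definedness, and the same cutoff $g_R(\cdot)=\chi(\cdot/R)$ plus duality for the mean-zero property. The only cosmetic difference is in part (I): the paper derives duality for $f,g\in C^2(\overline{\mathbb{R}^N})\cap L^1$ via the symmetrization identity $\int f\,T^K_t(g)=\tfrac12\iint (f(x)-f(y))(g(x)-g(y))K\,dy\,dx=\int g\,T^K_t(f)$ rather than passing through $(T^K_t)_\epsilon$ and taking $\epsilon\to0$, but this is the same Fubini-plus-symmetry argument packaged differently. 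One small slip: in your far-region bound you invoke ``$f\in L^\infty$'' for the $|f(x)|$ term, but the well-definedness hypothesis is only $f\in C^2\cap L^1$; what you actually need (and what the paper uses) is just that $|f(x)|$ is a finite constant in the $z$-integral and $\int_{|z|\geq1}|z|^{-N-\alpha}(1+|z|^\omega)\,dz<\infty$.
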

\begin{proof}
% $f\in ???$ then $T^K_t(f)\in ?????$.\\
% 
% e.g.  $f\in C^1\cap W^{1,\infty}$
% First, we can easily verify  $T^K_t(f)\in L^\infty$.
Let $f\in C^2({\mathbb{R}^N})\cap L^1(\mathbb{R}^N)$. Then, we have 
 % Indeed,
 \begin{equation*}\begin{split}
    &T^K_t(f)(x)
    =\int(f(x)-f(y))K(t,x,y)dy
    =\int\frac{f(x)-f(x+z)}{|z|^{N+\alpha}}k(t,x,z)dz\\
    %&=\int_0^\infty\int_{S^{N-1}}\frac{f(x)-f(x+r\sigma)}{r^{1+\alpha}}
    %k(t,x,r\sigma)d\sigma dr\\
    &=\int_0^1\int_{S^{N-1}}\frac{(\nabla f)(x)\cdot(r\sigma)+R_f(x,r\sigma)}{r^{1+\alpha}}
        k(t,x,r\sigma)d\sigma dr\\
        &\quad\quad\quad\quad\quad+\int_{|z|\geq1}\frac{
%        2\|f\|_{L^\infty}
f(x)-f(x+z)
        }{z^{N+\alpha}}
                k(t,x,z)dz=(a)+(b).
                     \end{split}\end{equation*}
                     where we used the Taylor expansion of $f$ in the first integral.\\
                     For $(b)$, we use the upper bound 
                     % condition
 of                                         \eqref{cond_bounds}:
                     \begin{equation*}\begin{split}
                     |(b)|&
\leq \Lambda\int_{|z|\geq1} \Big(|f(x)|+|f(x+z)|\Big)\frac{1+|z|^\omega}{|z|^{N+\alpha}}dz\\
&\leq C|f(x)| \int_{|z|\geq1}
\frac{1}{|z|^{N+\alpha-\omega}}dz   +  C\int_{|z|\geq1}|f(x+z)|dz\\
& \leq C|f(x)| + \|f\|_{L^1}.               
%                     \leq C\|f\|_{L^\infty}\int_1^\infty \frac{1+r^\omega}{r^{1+\alpha}}\leq C\|f\|_{L^\infty}.
                     \end{split}\end{equation*}
                     For $(a)$, if $\alpha<1$, we use  $|R_f(x,r\sigma)|\leq C\cdot \|\nabla^2 f\|_{L^\infty(B_x(1))
                                                               }\cdot r^2$ from the Taylor error estimate 
                                                               where
                                                               $B_x(r)$ is the ball of  radius $r$ centered at $x$:
 \begin{equation*}\begin{split}
                     |(a)|&\leq C|\nabla f(x)|\cdot\int_0^1 \frac{r}{r^{1+\alpha}}
       +C\|\nabla^2 f\|_{L^\infty(B_x(1))}\cdot\int_0^1 \frac{r^2}{r^{1+\alpha}}\\
       &\leq C
       \Big(|\nabla f(x)|+\|\nabla^2 f\|_{L^\infty(B_x(1))}\Big).
                      \end{split}\end{equation*}
                       
                      If $\alpha\geq1$, we use the condition \eqref{cond_candcell_all_s} with the assumption $(\alpha-1)<\nu$:
                                     \begin{equation*}\begin{split}                      
                        (a)&\leq C|\nabla f(x)|\cdot\int_0^1
                        \frac{1}{r^\alpha}\Big|\int_{S^{N-1}} k(t,x,r\sigma)\sigma d\sigma\Big|dr
                         +C\|\nabla^2 f\|_{L^\infty(B_x(1))}\\
                         &\leq C|\nabla f(x)|\cdot\int_0^1
                                                 \frac{1}{r^\alpha}
                                                 r^\nu
                                                 dr
                                                  +C\|\nabla^2 f\|_{L^\infty(B_x(1))}\\
                     &                             \leq C
                                                         \Big(|\nabla f(x)|+\|\nabla^2 f\|_{L^\infty(B_x(1))}\Big).
                                          \end{split}\end{equation*}
Now we can easily verify that $T^K_t(f)$                                          
is well-defined pointwise.\\

% by showing that
%$(T^K_t)_\epsilon(f)(x)$
%is a Cauchy sequence as $\epsilon$ goes to zero
%and the convergence is uniform in $x\in K, $ any compact set.
Note that                                      
        if  $f\in C^2(\overline{\mathbb{R}^N})\cap L^1(\mathbb{R}^N)$,
        then the above argument implies $T^K_t(f)\in L^\infty$. 
  % ***need good choice of domain of $T$ with a proof
%  so that the appendix could make sense\\
Then, the proof of $(I)$ follows the symmetry in $x,y$ of $K$. Indeed,
if  $f,g\in C^2(\overline{\mathbb{R}^N})\cap L^1(\mathbb{R}^N)$, 
then we have % the following computation.
\begin{equation}\begin{split}\label{dual_computation}
   \int f(x)T^K_t(g)(x)dx=  &  \int \int f(x)(g(x)-g(y))K(t,x,y)dy  dx \\&=
\frac{1}{2}\int \int (f(x)-f(y))(g(x)-g(y))K(t,x,y)dy  dx \\    
&=
\int \int (f(x)-f(y))g(x)K(t,x,y)dy  dx \\
&=\int T^K_t(f)(x)g(x)dx.\\      
                    \end{split}\end{equation}
In addition, for the case   $f\in C^2(\overline{\mathbb{R}^N})\cap L^1(\mathbb{R}^N)$ with  $g(x)=|x|^\gamma$, then   
the 
%first
 integral $\int |f(x)T^K_t(g)(x)|dx$
 % of \eqref{dual_computation}  
         is bounded due to the assumption $f\in L^\infty\cap L^1$ with Lemma  \ref{lem_fractional_laplacian}.
         Indeed, 
         Lemma  \ref{lem_fractional_laplacian}
         implies that $T^K_t(|\cdot|^\gamma)$ is  integrable
         in the unit ball containing the origin
         and is bounded outside of the ball. 
         Then, together with 
         $f\in L^\infty\cap L^1$, we obtain
         $f\cdot T^K_t(|\cdot|^\gamma)\in L^1$.
          Thus all equalities of \eqref{dual_computation} can be justified via a limit argument. \\

To prove $(II)$, we  take $g\in C_c^\infty$ such that $g=1$ in $B(1)$
and $supp(g)\subset B(2)$ and define 
$g_n$ by $g_n(\cdot):=g(\cdot/n)$. Then, thanks to the property  $(I)$ with $g_n$, the conclusion follows by taking a limit $n\rightarrow\infty$.
%of the equation \eqref{duality_of_T}
%(a proper approximation gives us a rigorous proof).\\
\begin{comment}
(III). 
\begin{equation*}\begin{split}
&\int \int
\frac{(f(x)-f(y))^2}{|x-y|^{N+\alpha}} dy  dx=
\int \int_{|x-y|\leq\zeta}\cdots dy  dx
+\int \int_{|x-y|>\zeta}\cdots dy  dx\\    
&\leq \frac{1}{{\Lambda^{-1}}}\int \int_{|x-y|\leq\zeta}
{(f(x)-f(y))^2}K(t,x,y) dy  dx\\
&\quad\quad\quad\quad\quad\quad\quad\quad\quad\quad+\int 4|f(x)|^2\int_{|x-y|>\zeta}
\frac{1}{|x-y|^{N+\alpha}}
 dy  dx\\
 &\leq \frac{2}{{\Lambda^{-1}}} \int f(x)T^K_t(f)(x)dx
+\frac{4N\cdot V_N}{\alpha\cdot\zeta^\alpha}
\|f\|^2_{L^2(
\mathbb{R}^N)}\\
                    \end{split}\end{equation*}
 and
\begin{equation*}\begin{split}
  &2\int f(x)T^K_t(f)(x)dx=
\int \int (f(x)-f(y))^2K(t,x,y)dy  dx \\
&\leq   \Lambda\int \int
\frac{(f(x)-f(y))^2}{|x-y|^{N+\alpha}} dy  dx
+\Lambda\int \int
\frac{(f(x)-f(y))^2}{|x-y|^{N+\alpha-\omega}} dy  dx\\
&\leq   2\Lambda\int \int
\frac{(f(x)-f(y))^2}{|x-y|^{N+\alpha}} dy  dx
+\Lambda\int \int_{|x-y|\geq 1}
\frac{(f(x)-f(y))^2}{|x-y|^{N+\alpha-\omega}} dy  dx\\
&\leq   2\Lambda\int \int
\frac{(f(x)-f(y))^2}{|x-y|^{N+\alpha}} dy  dx
+\Lambda\frac{4N\cdot V_N}{\alpha-\omega}
\|f\|^2_{L^2(
\mathbb{R}^N)}\\    
                    \end{split}\end{equation*}
\end{comment}
\end{proof}

In the following lemma, we present a
maximum principle for solutions of  \eqref{main_lin_eq}.
\begin{lem}\label{lem_property_equation_with_kernel}
%Let $K$ satisfy the kernel condition on
%$\{{\alpha},
%{\Lambda^{-1}},\Lambda,\zeta
%}$ 
%(for the case $\alpha\geq1$, $\{{\alpha},
%{\Lambda^{-1}},\Lambda,\zeta,\nu,\tau
%\}$).  
Suppose that $w\in L^\infty_t(L^\infty_x\cap L^1_x)$ is a 
smooth
%$C^2$ 
solution of \eqref{main_lin_eq}.
Then, the following properties hold:\\
(I). For any 
%smooth 
$C^2$ convex function $\eta:\mathbb{R}\rightarrow\mathbb{R}$,
% which is convex in the space variable, 
we have 
%$\partial_t w+T^K_t(w)=0 \quad \Rightarrow \partial_t (\eta(w))+T^K_t(\eta(w))\leq 0$ \\
\begin{equation}\label{ineq_convex}
\partial_t (\eta(w))+T^K_t(\eta(w))\leq 0.
\end{equation} 
(II). $L^p-$norm is non-increasing for $1\leq
p\leq\infty$:\\
\begin{equation}\label{L_P_maximum}
 \|w(t,\cdot)\|_{L^p(\mathbb{R}^N)}\leq
\|w(s,\cdot)\|_{L^p(\mathbb{R}^N)} \mbox{ for any } s<t.
\end{equation}
%(III). existence theory for \eqref{main_lin_eq} esp 
%(smooth data gives smooth solutions)
\end{lem}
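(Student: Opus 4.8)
The plan is to prove both parts of Lemma~\ref{lem_property_equation_with_kernel} by exploiting the pointwise identity \eqref{main_lin_eq}, the symmetry \eqref{cond_symm}, and the duality/mean-zero properties from Lemma~\ref{lem_property_kernel}. For part $(I)$, fix a $C^2$ convex $\eta$ and compute, at each point $(t,x)$,
\begin{equation*}
\partial_t(\eta(w(t,x))) = \eta'(w(t,x))\,\partial_t w(t,x) = \eta'(w(t,x))\int_{\mathbb{R}^N}[w(t,y)-w(t,x)]K(t,x,y)\,dy .
\end{equation*}
On the other hand, $T^K_t(\eta(w(t,\cdot)))(x) = \int_{\mathbb{R}^N}[\eta(w(t,x))-\eta(w(t,y))]K(t,x,y)\,dy$. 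Adding these, the desired inequality \eqref{ineq_convex} reduces to the pointwise bound
\begin{equation*}
\int_{\mathbb{R}^N}\Big(\eta(w(t,x))-\eta(w(t,y)) - \eta'(w(t,x))\,[w(t,x)-w(t,y)]\Big)K(t,x,y)\,dy \le 0,
\end{equation*}
and the integrand is $\le 0$ at every $y$ because convexity of $\eta$ gives $\eta(b)\ge\eta(a)+\eta'(a)(b-a)$ (with $a=w(t,x)$, $b=w(t,y)$), while $K\ge 0$ by hypothesis. I would remark that the integral expression for $T^K_t(\eta(w(t,\cdot)))$ makes sense because $\eta(w(t,\cdot))\in C^2\cap L^1$ when, say, $\eta(0)=0$ (the general case follows by replacing $\eta$ with $\eta(\cdot)-\eta(0)$, which does not affect \eqref{ineq_convex} since $T^K_t$ annihilates constants); some care is needed to justify the termwise splitting of the principal-value integral near $y=x$, but the Taylor-expansion argument of Lemma~\ref{lem_property_kernel} applied to $\eta(w(t,\cdot))$ handles this, using that $w$ is smooth.

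For part $(II)$, I would treat $1\le p<\infty$ first. Take $\eta(\xi)=|\xi|^p$ (or, to keep $\eta\in C^2$ when $1\le p<2$, approximate by $\eta_\delta(\xi)=(\xi^2+\delta^2)^{p/2}-\delta^p$ and let $\delta\to 0$ at the end). Integrate \eqref{ineq_convex} in $x$ over $\mathbb{R}^N$: the second term vanishes by the mean-zero property $(II)$ of Lemma~\ref{lem_property_kernel}, applied to $f=\eta(w(t,\cdot))\in C^2(\overline{\mathbb{R}^N})\cap L^1(\mathbb{R}^N)$, which holds because $w\in L^\infty_t(L^\infty_x\cap L^1_x)$ and is smooth (for $1\le p<2$ one also uses $|\xi|^p\le |\xi|\cdot\|w\|_{L^\infty}^{p-1}$ to get the $L^1$ bound). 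Hence
\begin{equation*}
\frac{d}{dt}\int_{\mathbb{R}^N}|w(t,x)|^p\,dx \le 0,
\end{equation*}
so $t\mapsto\|w(t,\cdot)\|_{L^p}^p$ is non-increasing, giving \eqref{L_P_maximum} for finite $p$. For $p=\infty$, pass to the limit $p\to\infty$ in $\|w(t,\cdot)\|_{L^p}\le\|w(s,\cdot)\|_{L^p}$ using $\|w(t,\cdot)\|_{L^p}\to\|w(t,\cdot)\|_{L^\infty}$, valid since $w(t,\cdot)\in L^1\cap L^\infty$.

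The main obstacle is not the convexity inequality itself but the analytic bookkeeping: justifying that $\eta(w(t,\cdot))$ lies in the class $C^2(\overline{\mathbb{R}^N})\cap L^1$ required by Lemma~\ref{lem_property_kernel} (so that both the pointwise definition of $T^K_t$ and the mean-zero identity apply), legitimately differentiating under the integral sign in $t$, and—for $p<2$—controlling the $\delta\to 0$ limit uniformly in $t$ so that monotonicity survives passage to the limit. One should also check that the time-derivative $\partial_t\!\int |w(t,x)|^p dx = \int p|w|^{p-2}w\,\partial_t w\,dx$ is justified, which again follows from smoothness of $w$ together with the uniform $L^1\cap L^\infty$ bounds; all of these are routine given the standing hypotheses, so I would state them briefly and move on.
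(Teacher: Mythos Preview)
Your proof is correct and follows essentially the same approach as the paper: for $(I)$ you reduce to the pointwise convexity inequality $\eta(b)-\eta(a)-\eta'(a)(b-a)\ge 0$ against the nonnegative kernel, and for $(II)$ you integrate \eqref{ineq_convex} with $\eta(\cdot)=|\cdot|^p$ and invoke the mean-zero property of $T^K_t$, regularizing $\eta$ when $p<2$. The only addition is that you handle $p=\infty$ explicitly by letting $p\to\infty$, which the paper leaves implicit.
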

\begin{rem}
Also we assume that the solutions are smooth.
However
 the estimate of the result does
not depend on this smoothness. 
\end{rem}
\begin{proof}
To prove $(I)$, we multiply $\eta^\prime(w)$ to the equation \eqref{main_lin_eq} to get
$\partial_t (\eta(w))+\eta^\prime(w)T^K_t(w)=0$. Then it is enough to show
$\eta^\prime(w)T^K_t(w)-T^K_t(\eta(w))\geq0$.
%For convinience, we drop the dependence of $t$ for a moment. 
Using the integral representation of $T^K_t$, we have

\begin{equation*}\begin{split}
&\eta^\prime(w(x))\Big(T^K_t(w)\Big)(x)-\Big(T^K_t(\eta(w))\Big)(x)= \\
&\int(w(x)-w(y))K(x,y)\eta^\prime(w(x)) dy
-\int(\eta(w(x))-\eta(w(y)))K(x,y)dy\\
&=\int\Big(\eta^\prime(w(x)) (w(x)-w(y))-(\eta(w(x))-\eta(w(y)))\Big)
K(x,y)dy\geq 0\\
%&=\int\Big(
%\int_0^1 \eta^\prime\Big(w(x)s_1+w(y)(1-s_1)\Big)\cdot(w(x)-w(y))ds_1\\
%&\quad\quad\quad-\eta^\prime(w(x)) (w(x)-w(y))\Big)
%K(t,x,y)dy\\
%&=\int(w(x)-w(y))\Big(
%\int_0^1 \eta^\prime\Big(w(x)s_1+w(y)(1-s_1)\Big)
%-\eta^\prime(w(x)) ds_1\Big)
%K(x,y)dy\\
%&=\int(w(x)-w(y))\Big(
%\int_0^1 \\
%&\quad
%%\eta^\prime\Big(w(x)s+w(y)(1-s)\Big)
%%-\eta^\prime(w(x)) 
%\Big(\int_0^1      
%\eta^{\prime\prime}(\Big(w(x)s_1+w(y)(1-s_1)\Big)s_2+  w(x) (1-s_2))
%\cdot\Big( \Big(w(x)s_1+w(y)(1-s_1)\Big)-w(x)\Big)
%ds_2\Big)\\
%&\quad
%ds_1\Big)
%K(x,y)dy\\
%&\quad
%%\eta^\prime\Big(w(x)s+w(y)(1-s)\Big)
%%-\eta^\prime(w(x)) 
%\Big(\int_0^1      
%\eta^{\prime\prime}(\Big(w(x)s_1+w(y)(1-s_1)\Big)s_2+  \eta^\prime(w(x)) (1-s_2))
%\cdot\Big( \Big(w(x)s_1+w(y)(1-s_1)\Big)-\eta^\prime(w(x))\Big)
%ds_2\Big)\\
%&\quad
%ds_1\Big)
%K(x,y)dy\\
\end{split}\end{equation*} because 
$\eta^\prime(a) (a-b)-(\eta(a)-\eta(b))\geq0$ from convexity of $\eta$.\\

To prove $(II)$, % A proof of this maximum principle \eqref{L_P_maximum} is contained essentially 
%in the papers of 
%%Cordoba, Resnick.
%\cite{cordoba_maximum}.
 %Or 
 for $p\geq2$, we  use 
\eqref{ineq_convex} 
% the property $(I)$ 
 with putting 
$\eta(\cdot)=
|\cdot|^p$ and taking an integral in $x$ variable
in order to use $(II)$ of Lemma \ref{lem_property_kernel}.
For $p<2$,  we need to regularize $\eta(\cdot)=
|\cdot|^p$ first. % then we use \eqref{ineq_convex}.
%Then  take a limit.
\end{proof}

Now we adopt the notion of the class $\mathcal{U}_r$
of test functions following the paper \cite{variation_kiselev}. Let $A\geq 1$ be a constant which will be chosen later.
%We use a notation $\lceil x\rceil$ as the smallest integer not less than $x$.
%For the  usual Sobolev spaces in $\mathbb{R}^N$,
%we use the symbol $H^n=W^{n,2}(\mathbb{R}^N)$.
\begin{defn}\label{def_u_r} We say that a 
%$H^{\lceil(N/2)\rceil}$
measurable
 function $\varphi(\cdot)$ on $\mathbb{R}^N$ lies in 
$\mathcal{U}_r$ for some $r\in(0,\infty)$ if  $\varphi$ satisfies the following four conditions:
\begin{equation*}
 \int_{\mathbb{R}^N}\varphi(x)dx=0 \quad\quad \mbox{the mean zero-condition,}
\end{equation*}
\begin{equation*}
 \int_{\mathbb{R}^N}|\varphi(x)||x-x_0|^\gamma dx\leq r^\gamma 
\quad \mbox{for some }x_0\in\mathbb{R}^N \quad \mbox{the concentration-condition,}
\end{equation*}
\begin{equation*}
 \|\varphi\|_{L^\infty(\mathbb{R}^N)}\leq \frac{A}{r^N} \quad\quad \mbox{ the } L^\infty\mbox{-condition, and}
\end{equation*}
\begin{equation*}
 \|\varphi\|_{L^1(\mathbb{R}^N)}\leq 1\quad\quad \mbox{ the } L^1\mbox{-condition}.
\end{equation*} In addition, we say that $\varphi$ lies in $ a\mathcal{U}_r$ for some $a>0$ when  $(1/a)\varphi\in\mathcal{U}_r$. We call $x_0$ 
a center of $\varphi$.
\end{defn}
%\begin{rem}
% $\mathcal{U}_r$ depends on $A$ and $\gamma$. 
% %$x_0$ is called a center of $\varphi$.
%\end{rem}

The following lemma connects between
$C^\beta$ space and $\mathcal{U}_r$,
which tells us that  $r^{-\beta}\mathcal{U}_r$
plays a similar role of the dual space of $C^\beta$.
\begin{lem}\label{lem_c_beta}
Let $\beta$ be any constant such that $0<\beta\leq\gamma$.\\
(I)  % $0<\beta<\min(1,\gamma)$.
Then  we have 
\begin{equation*}
 \Big|\int_{\mathbb{R}^N}{w}(x)\varphi(x)dx\Big|\leq  r^\beta [{w}]_{C^\beta(\mathbb{R}^N)}
\end{equation*}
 for any ${w}\in C^\beta(\mathbb{R}^N)$,
 for any $0<r<\infty$,   and for any $\varphi\in\mathcal{U}_r$. \\

(II) Conversely,
%if $0<\beta<\min(1,\gamma)$, then
%for any $\beta>0$ and 
 we have a constant $C$ such that \\
if  a bounded function ${w}$ satisfies
$\mbox{ } \sup\limits_{\varphi\in
\mathcal{S}\cap
\mathcal{U}_r,0<r\leq 1}
r^{-\beta}\Big|\int_{\mathbb{R}^3}{w}(x)\varphi(x)dx\Big|<\infty$,\\
then ${w}\in C^\beta$ and
\begin{equation}\label{lem_c_beta_(II)}
\|{w}\|_{C^\beta(\mathbb{R}^N)}\leq C\cdot \Big(\|{w}\|_{L^\infty} + 
\sup\limits_{\varphi\in
\mathcal{S}\cap
\mathcal{U}_r,0<r\leq 1}
r^{-\beta}\Big|\int_{\mathbb{R}^3}{w}(x)\varphi(x)dx\Big| \Big).
\end{equation}
\end{lem}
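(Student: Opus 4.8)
\textbf{Proof proposal for Lemma \ref{lem_c_beta}.}

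The plan is to treat the two parts separately; part (I) is a direct computation exploiting the defining properties of $\mathcal{U}_r$, and part (II) is the genuinely substantive direction, where one must reconstruct a pointwise Hölder estimate from a family of integral bounds.

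\emph{Part (I).} First I would use the mean-zero condition to subtract a constant: for any fixed point $x_0\in\mathbb{R}^N$ that serves as a center of $\varphi$, we have $\int{w}(x)\varphi(x)dx=\int({w}(x)-{w}(x_0))\varphi(x)dx$. Then I would estimate
\begin{equation*}
\Big|\int({w}(x)-{w}(x_0))\varphi(x)dx\Big|\leq [{w}]_{C^\beta}\int|x-x_0|^\beta|\varphi(x)|dx.
\end{equation*}
Since $\beta\leq\gamma$, the concentration condition $\int|\varphi(x)||x-x_0|^\gamma dx\leq r^\gamma$ does not immediately give the power $\beta$, so the remaining step is to interpolate: split the integral over $\{|x-x_0|\leq r\}$, where $|x-x_0|^\beta\leq r^\beta$ and the $L^1$-condition bounds $\int|\varphi|\leq 1$ giving a contribution $\leq r^\beta$, and over $\{|x-x_0|>r\}$, where $|x-x_0|^\beta\leq r^{\beta-\gamma}|x-x_0|^\gamma$ and the concentration condition yields $\leq r^{\beta-\gamma}\cdot r^\gamma=r^\beta$. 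Adding these gives the bound $r^\beta[{w}]_{C^\beta}$ (up to an absolute constant, which one can absorb or track; if the statement wants the clean constant $1$, one bounds $|x-x_0|^\beta\le\max(r^\beta,\,r^{\beta-\gamma}|x-x_0|^\gamma)$ rather than summing).

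\emph{Part (II).} Here I want to show: if the quantity $Q:=\sup_{\varphi\in\mathcal{S}\cap\mathcal{U}_r,\,0<r\le 1}r^{-\beta}|\int{w}\varphi|$ is finite, then $[{w}]_{C^\beta}\lesssim \|{w}\|_{L^\infty}+Q$. The standard device is a Littlewood--Paley / mollification argument: fix a smooth bump $\psi\in\mathcal{S}$, $\psi\geq0$, supported in the unit ball with $\int\psi=1$, set $\psi_r(x)=r^{-N}\psi(x/r)$, and consider the difference ${w}*\psi_r(x)-{w}*\psi_r(x')$ and the telescoping decomposition ${w}={w}*\psi_r+\sum_{j}({w}*\psi_{2^{-j}r}-{w}*\psi_{2^{-j+1}r})$ as $r\to 0$. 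The key point is that for each dyadic scale the convolution kernel $\psi_{2^{-j}r}-\psi_{2^{-j+1}r}$ — and, for the difference-of-translates estimate, the kernel $\tau_h\psi_r-\psi_r$ — has mean zero, is bounded by $C/(\text{scale})^N$, and has $L^1$ norm $\le C$, hence after normalization by a fixed constant $A$ lies in $\mathcal{U}_{\text{scale}}$; these are exactly the four defining conditions of $\mathcal{U}_{\text{scale}}$, including the concentration condition since the kernel is compactly supported at that scale. Therefore $|{w}*(\psi_{2^{-j}r}-\psi_{2^{-j+1}r})(x)|\leq C\,Q\,(2^{-j}r)^\beta$, and summing the geometric series in $j$ (which converges because $\beta>0$) together with $\|{w}-{w}*\psi_r\|_{L^\infty}\le$ (same type of bound) controls $|{w}(x)-{w}(x')|$ when $|x-x'|\sim r\le 1$; for $|x-x'|\ge 1$ one uses $\|{w}\|_{L^\infty}$ directly. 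Care is needed to ensure the test functions used are actually in $\mathcal{S}$ (so one takes $\psi$ smooth, not just a characteristic function) and to fix $A$ large enough once and for all so that all these kernels, after dividing by $A$, satisfy the $L^\infty$-condition $\|\varphi\|_{L^\infty}\le A/(\text{scale})^N$.

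\emph{Main obstacle.} The delicate part is part (II): one must be careful that the kernels produced by the Littlewood--Paley decomposition genuinely satisfy \emph{all four} conditions defining $\mathcal{U}_r$ at the correct scale — in particular the concentration condition with the right power $r^\gamma$ — and that the constant $A$ can be chosen uniformly; the bookkeeping of which scale each kernel belongs to, and the convergence of the dyadic sum (which is where $\beta>0$, as opposed to $\beta\ge 0$, is used), is where the real content lies. The note ``$\mathbb{R}^3$'' appearing in the statement is presumably a typo for $\mathbb{R}^N$ and I would state and prove it for general $N$.
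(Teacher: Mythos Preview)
Your approach is correct in spirit and close to the paper's, with one small miss in part~(I). Your splitting argument gives $2r^\beta[w]_{C^\beta}$, and the suggested fix via the pointwise $\max$ does not recover the constant~$1$: once you integrate, the max splits back into the same two regions and you still sum two copies of $r^\beta$. The paper instead applies H\"older's inequality with exponents $\gamma/\beta$ and $\gamma/(\gamma-\beta)$ against the measure $|\varphi(x)|\,dx$,
\[
\int |x-x_0|^\beta|\varphi|\,dx \le \Big(\int |x-x_0|^\gamma|\varphi|\,dx\Big)^{\beta/\gamma}\Big(\int|\varphi|\,dx\Big)^{(\gamma-\beta)/\gamma}\le (r^\gamma)^{\beta/\gamma}\cdot 1 = r^\beta,
\]
which yields the sharp constant directly.

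For part~(II) your telescoping argument is essentially the paper's proof in unpacked form. The paper quotes the Littlewood--Paley characterization of $C^\beta$ from Stein, $\|w\|_{C^\beta}\lesssim \|w\|_{L^\infty}+\sup_{j\ge 1}2^{\beta j}\|\Delta_j w\|_{L^\infty}$, and then simply checks that the single convolution kernel $\Psi_{2^{-j}}$ defining $\Delta_j$ lies in $a\,\mathcal{U}_{2^{-j}}$ for a fixed $a$ depending only on $\Psi$ (mean zero from $\widehat\Psi(0)=0$, and the $L^1$, $L^\infty$, concentration bounds from $\Psi\in\mathcal{S}$). Your dyadic telescope $\psi_{2^{-j}r}-\psi_{2^{-j+1}r}$ reproduces exactly this, just without citing the black-box characterization. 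One phrasing point: rather than ``fix $A$ large enough,'' you should say the normalized kernel lies in $a\,\mathcal{U}_r$ for some fixed $a$ and absorb $a$ into the final constant $C$; the parameter $A$ in the definition of $\mathcal{U}_r$ is fixed elsewhere in the paper and is not yours to choose here.
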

%\begin{rem}&\int(\eta(w(x))-\
 %We will see that the above constant $C$ depends only on 
 %$N,\gamma$ and $\beta$ as long as $A\geq1$.
%\end{rem}
\begin{proof}
 %Let ${w}\in C^\beta(\mathbb{R}^N)$ with $0<\beta\leq\gamma$
%Note here $0<\beta\leq\gamma<+\infty $ for both directions
For the part \textit{(I)}, let $x_0$ be a center of   $\varphi$. Then, from
the mean zero property,
\begin{equation*}\begin{split}
 &\Big|\int_{\mathbb{R}^N}{w}(x)\varphi(x)dx\Big|
\leq  |\int_{\mathbb{R}^N}\Big(
{w}(x)-{w}(x_0)\Big)\varphi(x)dx\Big| \\
&\leq [{w}]_{C^\beta(\mathbb{R}^N)}
\int_{\mathbb{R}^N}|
x-x_0|^{\beta}|\varphi(x)|dx \\
&\leq [{w}]_{C^\beta(\mathbb{R}^N)}
\Big(\int_{\mathbb{R}^N}|
x-x_0|^{\gamma}|\varphi(x)|dx \Big)^{\beta/\gamma}
\Big(\int_{\mathbb{R}^N}|\varphi(x)|dx \Big)^{(\gamma-\beta)/\gamma}
 \leq  r^\beta [{w}]_{C^\beta(\mathbb{R}^N)}.
\end{split}\end{equation*}
For the part \textit{(II)}, we recall 
%relationship between H\"{o}lder space and 
Littlewood-Paley projections $\Delta_j$,
which is defined by
$\Delta_j({w})={w} * \Psi_{2^{-j}}$ where
$\Psi_{t}(x)=t^{-N}\Psi(x/t)$ and $\Hat{\Psi}(\xi)=\eta(\xi)-\eta(2\xi)$
with $\eta\in C^\infty_0, 0\leq \eta(\xi)\leq 1, \eta=1 \mbox{ for } 
|\xi|\leq1 \mbox{ and } \eta=0 \mbox{ for } 
|\xi|\geq2$. We use the  characterization of $C^\beta$
in terms of Littlewood-Paley projections (see
 %p253 of
 Stein \cite{ste:harmonic}). Indeed,
if a bounded function ${w}$ in $\mathbb{R}^N$ satisfies
\begin{equation*}
%\sup_{j\mbox{:integer, }j\geq1}
\sup_{j=1,2,3,\cdots}
2^{\beta j}
\|\Delta_j({w})\|_{L^\infty(\mathbb{R}^N)}<\infty
\end{equation*}
then ${w}$ lies in $ C^\beta(\mathbb{R}^N)$ and it has the estimate
\begin{equation*}
\|{w}\|_{C^\beta(\mathbb{R}^N)}\leq C_1(\|{w}\|_{L^\infty(\mathbb{R}^N)}+
\sup_{j=1,2,3,\cdots}
2^{\beta j}
\|\Delta_j({w})\|_{L^\infty(\mathbb{R}^N)})
\end{equation*} where $C_1$ depends only on $\beta,N$ and the choice of $\Psi$.
In order to show \eqref{lem_c_beta_(II)}, it is enough to find $0<a<\infty$ such that
$\Psi_{2^{-j}}\in a\mathcal{U}_{2^{-j}}$ for all $j\geq1$ because 
$\Delta_j({w})(x)=\int_{\mathbb{R}^3}{w}(y)\Psi_{2^{-j}}(x-y)dy$
and $\mathcal{U}_r$ is translation invariant.  \\
It is clear that $\Psi$ is a Schwartz function from
the fact $\eta\in C_0^\infty$. Thus we can take  $a:=\|\Psi\|_{L^\infty(\mathbb{R}^3)}+
\|\Psi\|_{L^1(\mathbb{R}^3)}+
\int_{\mathbb{R}^N}|\Psi(x)||x|^\gamma dx<\infty$. Then, 
%clearly, $\int(1/a)\Psi_r$ lies in the Sobolev space %$H^{\lceil(N/2)\rceil}$ and, 
for any $r>0$, we have
\begin{equation*}\begin{split}
&\int(1/a)\Psi_r =(1/a)\int\Psi =(1/a)\Hat{\Psi}(0)=0,\\
&\|(1/a)\Psi_r\|_{L^\infty} \leq (1/a)r^{-N}
\|\Psi\|_{L^\infty}\leq r^{-N} \leq \frac{A}{r^{N}},\\
&\|(1/a)\Psi_r\|_{L^1} \leq (1/a)
\|\Psi\|_{L^1}\leq 1, \mbox{ and}\\
&\int_{\mathbb{R}^N}|(1/a)\Psi_r(x)||x|^\gamma dx
=(1/a)r^\gamma \cdot\int_{\mathbb{R}^N}|\Psi(x)||x|^\gamma dx\leq r^\gamma.
\end{split} 
\end{equation*}% Therefore if we assume that our parameter $A\geq(1/a)$, then
Thus \eqref{lem_c_beta_(II)} follows with $C=C_1\cdot\max\{1,a\}$.

%Let us define $\Phi=(1/a)\Psi$ and $\Phi_{t}(x)=t^{-N}\Phi(x/t)$.
%Then it is easy to see $\Phi_{2^{-j}}\in \mathcal{U}_{2^{-j}}$ i.e.
%$\Psi_{2^{-j}}\in a\mathcal{U}_{2^{-j}}$.
\end{proof}

%For any given kernel K on $[0,T]$, % with $T<\infty$, 
We define the backward kernel ${{K}^{(\overline{T})}}$ corresponding to any finite time $\overline{T}<T$ and
to the kernel $K$ by 
\begin{equation}\label{def_back_kernel}{{K}^{(\overline{T})}}(s,x,y)=K(\overline{T}-s,x,y).\end{equation} Then it is easy to see
$T^K_t=T^{{{K}^{(\overline{T})}}}_{\overline{T}-t}$ and they
share the $weak$-$(*)$-kernel condition with the same parameter set.
\begin{lem}\label{lem_dual}
 Let ${w},\varphi
 \in L^\infty(0,\overline{T};(L^1\cap L^\infty)(\mathbb{R}^N))$ be two smooth solutions of \eqref{main_lin_eq}
%on $[0,\overline{T}]\times\mathbb{R}^N$ 
with $\overline{T}<\infty$ 
for each smooth initial data ${w}_0,\varphi_0\in (L^1\cap L^\infty)(\mathbb{R}^N)$
and
for each associated kernels $K$ and ${{K}^{(\overline{T})}}$,
respectively. In addition, we assume $\varphi_0\in \mathcal{U}_r\cap\mathcal{S}$
for some $r\in(0,1]$.
Then, 
%for any $t\in[0,\overline{T}]$,
 we have
\begin{equation*}
 \int_{\mathbb{R}^3}{w}_0(x)\varphi(\overline{T},x)dx =
 \int_{\mathbb{R}^3}{w}(\overline{T},x)\varphi_0(x)dx.
\end{equation*}
\end{lem}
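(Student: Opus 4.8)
The plan is to show that the time-dependent pairing
\[
F(t):=\int_{\mathbb{R}^N} w(t,x)\,\varphi(\overline{T}-t,x)\,dx
\]
is constant on $[0,\overline{T}]$; the asserted identity then follows at once by comparing $F(0)=\int w_0(x)\varphi(\overline{T},x)\,dx$ with $F(\overline{T})=\int w(\overline{T},x)\varphi_0(x)\,dx$, both finite since $w_0,\varphi_0\in L^1\cap L^\infty$ and $w,\varphi\in L^\infty(0,\overline{T};L^1\cap L^\infty)$.

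First I would differentiate $F$. By the chain rule,
\[
F'(t)=\int_{\mathbb{R}^N}(\partial_t w)(t,x)\,\varphi(\overline{T}-t,x)\,dx-\int_{\mathbb{R}^N} w(t,x)\,(\partial_s\varphi)(\overline{T}-t,x)\,dx .
\]
Since $w$ solves \eqref{main_lin_eq} for $K$ and $\varphi$ solves it for $K^{(\overline{T})}$, we have the pointwise identities $(\partial_t w)(t,\cdot)=-T^K_t(w(t,\cdot))$ and $(\partial_s\varphi)(s,\cdot)=-T^{K^{(\overline{T})}}_s(\varphi(s,\cdot))$ (these are well-defined by \lemref{lem_property_kernel}, the solutions being smooth). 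Setting $s=\overline{T}-t$ and using $T^{K^{(\overline{T})}}_{\overline{T}-t}=T^K_t$, noted just after \eqref{def_back_kernel}, this becomes
\[
F'(t)=-\int_{\mathbb{R}^N}T^K_t\big(w(t,\cdot)\big)(x)\,\varphi(\overline{T}-t,x)\,dx+\int_{\mathbb{R}^N} w(t,x)\,T^K_t\big(\varphi(\overline{T}-t,\cdot)\big)(x)\,dx .
\]
Now I apply the duality of $T$, \eqref{duality_of_T} in \lemref{lem_property_kernel}, with $f=w(t,\cdot)$ and $g=\varphi(\overline{T}-t,\cdot)$, both lying in $C^2(\overline{\mathbb{R}^N})\cap L^1(\mathbb{R}^N)$: the two integrals coincide, so $F'(t)=0$ for every $t\in(0,\overline{T})$, hence $F$ is constant.

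The only real work is justifying the two formal steps: (i) differentiation under the integral, and (ii) that at each time the functions in play meet the hypotheses of \lemref{lem_property_kernel}(I). For (ii) I would invoke the smoothness of the solutions — which in particular supplies the bounds on $w(t,\cdot)$, $\varphi(\overline{T}-t,\cdot)$ and their first two derivatives needed for membership in $C^2(\overline{\mathbb{R}^N})$ — together with the uniform $L^1$ control from $L^\infty(0,\overline{T};L^1\cap L^\infty)$; the hypothesis $\varphi_0\in\mathcal{S}$ is used to propagate enough decay of $\varphi(s,\cdot)$ so that both the pairings and $T^K_t$ applied to it are well behaved. For (i) I would use the pointwise formula $(\partial_t w)(t,\cdot)=-T^K_t(w(t,\cdot))$ combined with the estimate $|T^K_t f(x)|\le C\big(|f(x)|+|\nabla f(x)|+\|\nabla^2 f\|_{L^\infty(B_x(1))}+\|f\|_{L^1}\big)$ obtained in the proof of \lemref{lem_property_kernel}, and likewise for $\partial_s\varphi$, to build a time-uniform integrable majorant for the difference quotients of the integrand and pass to the limit by dominated convergence; alternatively, under the smoothness assumptions $F\in C^1([0,\overline{T}])$ directly and one just integrates $F'\equiv 0$. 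I expect this bookkeeping, rather than the algebraic cancellation, to be the only obstacle.
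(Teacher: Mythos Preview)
Your argument is correct and mirrors the paper's proof essentially line for line: define the pairing $F(t)=\int w(t,x)\varphi(\overline{T}-t,x)\,dx$, differentiate, insert the two evolution equations, and cancel the resulting terms using $T^{K^{(\overline{T})}}_{\overline{T}-t}=T^K_t$ together with the duality in \lemref{lem_property_kernel}. The only difference is that you spell out the justification for differentiating under the integral and for invoking \eqref{duality_of_T}, which the paper leaves implicit.
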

%\begin{rem}
% $K$ and ${{K}^{(\overline{T})}}$ shares the kernel condition with the same parameter %set.
% %
%%$\{{\alpha},
%%{\Lambda^{-1}},\Lambda,\zeta
%%\}$ 
%%(if $\alpha\geq1$, $\{{\alpha},
%% %\}$). 
%\end{rem}

\begin{proof}
%We want to show 
%$\frac{d}{dt}\int_{\mathbb{R}^3}{w}(t,x)\varphi(T-t,x)dx=0$.\\
Let $t\in[0,\overline{T}]$.
% and $s\in[0,t]$. 
Then, we have
\begin{equation*}\begin{split}
&\frac{d}{dt}\int_{\mathbb{R}^3}{w}(t,x)\varphi(\overline{T}-t,x)dx\\&=
\int_{\mathbb{R}^3}(\partial_t{w})(t,x)\varphi(\overline{T}-t,x)dx
-\int_{\mathbb{R}^3}{w}(t,x)(\partial_t\varphi)(\overline{T}-t,x)dx\\
&=-\int_{\mathbb{R}^3}
T^K_t({w}(t,\cdot))(x)
\varphi(\overline{T}-t,x)dx+
\int_{\mathbb{R}^3}
{w}(t,x)
T^{{{K}^{(\overline{T})}}}_{\overline{T}-t}(\varphi(\overline{T}-t,\cdot))(x)dx.
\end{split} 
\end{equation*}
Then, we use Lemma \ref{lem_property_kernel} and the fact
$T^K_t=T^{{{K}^{(\overline{T})}}}_{\overline{T}-t}$ to get 
\begin{equation*}\begin{split}
&=-\int_{\mathbb{R}^3}
T^K_t({w}(t,\cdot))(x)
\varphi(\overline{T}-t,x)dx+
\int_{\mathbb{R}^3}
T^{{{K}^{(\overline{T})}}}_{\overline{T}-t}({w}(t,\cdot))(x)
\varphi(\overline{T}-t,x)dx\\
&=-\int_{\mathbb{R}^3}
T^K_t({w}(t,\cdot))(x)
\varphi(\overline{T}-t,x)dx+
\int_{\mathbb{R}^3}
T^{{K}}_{t}({w}(t,\cdot))(x)
\varphi(\overline{T}-t,x)dx=0.
\end{split} 
\end{equation*}
 As a result, we conclude that $ \int_{\mathbb{R}^3}{w}(t,x)\varphi(\overline{T}-t,x)dx$ is
constant in $t$.
Then put $t:=0$ and $t:=\overline{T}$.

\end{proof}

\section{The main proposition and its proof %\ref{main_prop}
}\label{sec_Proof_of_main_prop}
%\begin{proof}[Proof of proposition \ref{main_prop}]
We are ready to present the main proposition about
the evolution of test functions in a short time interval, whose
length is proportional to $r^\alpha$. 
Roughly speaking, 
%the following proposition tells us that
if $\varphi_0\in\mathcal{U}_r$, then there exist
$z=z(r,s)$ and $\beta$ such that $\varphi(s)\in\Big(\frac{r}{z}\Big)^\beta\mathcal{U}_z$ for $s\in[0,\delta r^\alpha]$.
%Note that the length of a time interval goes to zero
%as r approaches zerp
\begin{prop}\label{main_prop}
There exist constants $A\geq1$, $\delta>0$, $L>0$ and $\beta>0$ with the following property:\\

Let $0<r\leq1 $ and $\varphi_0\in\mathcal{U}_r\cap\mathcal{S}$.
% and $\overline{T}\in(0,T)$. 
 Then, there exist a 
 smooth solution  $\varphi
 \in L^\infty(0, {T};(L^1\cap L^\infty)(\mathbb{R}^N))$ of \eqref{main_lin_eq}
%on $[0,\overline{T}]\times\mathbb{R}^N$ 
% for a kernel $K$ with the kerneal condition
%$({\alpha},{\alpha^{\prime}},\Lambda,\xi)$
% and 
with the initial condition
$\varphi(0)=\varphi_0$.
%$\varphi(0)\in\mathcal{U}_r$ for some $0<r\leq1$. 
Also,  for 
any $s\in[0,\min\{\delta r^{{\alpha}}, {T}\}]$, we have
\begin{equation}\label{eq_main_prop}
 \varphi(s) \in \Big(\frac{r}{z(r,s)}\Big)^\beta\mathcal{U}_{z(r,s)}
\end{equation} where $z(r,s)$
is defined by $z(r,s)=r(1+L\frac{s}{r^\alpha})$.\\
%\begin{equation}\label{eq_main_prop}
% \varphi(s) \in \Big(\frac{r}{r(1+L\frac{s}{r^\alpha})}\Big)^\beta\mathcal{U}_{r(1+L\frac{s}{r^\alpha})}.
%\end{equation} 
Moreover, if $r=1$, then \begin{equation}\label{eq_main_prop_r_1}
\varphi(s) \in (1+L{s})\mathcal{U}_{1}\quad \mbox{ for any }  s\in[0, {T}).\end{equation}
\end{prop}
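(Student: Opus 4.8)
The plan is to prove Proposition~\ref{main_prop} by tracking each of the four defining conditions of $\mathcal{U}_r$ along the flow and showing that they persist, in a rescaled form, for the short time $s\in[0,\delta r^\alpha]$. First I would construct the solution $\varphi$: since $\varphi_0\in\mathcal{U}_r\cap\mathcal{S}$ is a smooth, integrable, bounded initial datum and $K$ satisfies the $weak$-$(*)$-kernel condition with the smoothness assumption \eqref{smooth_assump_lin}, standard parabolic existence theory (or a Picard iteration in a suitable Sobolev/Schwartz-based space, together with the a priori control from Lemma~\ref{lem_property_equation_with_kernel}) yields a smooth solution $\varphi\in L^\infty(0,T;(L^1\cap L^\infty)(\mathbb{R}^N))$ with $\varphi(0)=\varphi_0$. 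The mean-zero condition $\int\varphi(s,x)\,dx=0$ is immediate and exact for all time: integrating \eqref{main_lin_eq} in $x$ and using part $(II)$ (mean zero of $T$) of Lemma~\ref{lem_property_kernel} gives $\frac{d}{dt}\int\varphi(t,x)\,dx=0$. Likewise the $L^1$-condition $\|\varphi(s)\|_{L^1}\leq 1$ is preserved by the maximum principle \eqref{L_P_maximum} of Lemma~\ref{lem_property_equation_with_kernel}. So these two conditions require no loss and no rescaling.

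The substance is the remaining two conditions, which genuinely compete. For the $L^\infty$-condition I would invoke the regularization lemmas (Lemma~\ref{lem_L_infty}, and for the $L^1$-decay Lemma~\ref{lem_L_1}): the lower bound $\Lambda^{-1}\mathbf{1}_{|z|\le\zeta}$ on the kernel forces an ultracontractive-type decay of $\|\varphi(s)\|_{L^\infty}$, so that starting from $\|\varphi_0\|_{L^\infty}\le A/r^N$ one keeps $\|\varphi(s)\|_{L^\infty}\le A/z(r,s)^N$ for $s\in[0,\delta r^\alpha]$, provided $A$ is chosen large and $\delta$ small depending only on the parameters; since $z(r,s)=r(1+L s/r^\alpha)$ grows only mildly on this time scale, the decay of the sup-norm comfortably beats the growth of $z^{-N}$. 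For the concentration-condition I would use Lemma~\ref{lem_concent}, which is where Lemma~\ref{lem_fractional_laplacian} enters: testing the equation against $|x-x_0|^\gamma$ (with $x_0$ the center of $\varphi_0$, kept fixed, or allowed to drift slightly), using the duality \eqref{duality_of_T} and the pointwise bound on $T^K_t(|\cdot|^\gamma)$, one estimates $\frac{d}{ds}\int|\varphi(s,x)||x-x_0|^\gamma\,dx$ and shows it grows at a controlled rate so that $\int|\varphi(s,x)||x-x_0|^\gamma\,dx\le z(r,s)^\gamma$ for $s\in[0,\delta r^\alpha]$ — this is precisely what fixes the constant $L$ and the exponent in the law $z(r,s)=r(1+Ls/r^\alpha)$. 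Finally, given that $\varphi(s)$ lies in $\mathcal{U}_{z(r,s)}$ up to the scalar factor, one reads off from the $L^1$-bound (which is $\le 1$, hence $\le (r/z)^\beta$ times something $\ge1$... ) the correct normalization: comparing $\|\varphi(s)\|_{L^1}\le1$ with the $L^1$-size expected of an element of $(r/z)^\beta\mathcal{U}_z$ pins down that $\varphi(s)\in (r/z(r,s))^\beta\mathcal{U}_{z(r,s)}$, which is \eqref{eq_main_prop}; the exponent $\beta$ is whatever small positive number makes all four rescaled inequalities simultaneously consistent, and it depends only on the parameters. The case $r=1$ in \eqref{eq_main_prop_r_1} is the same argument run without the iteration: one simply does not rescale the spatial parameter (keep $z\equiv1$) but allows the scalar factor $1+Ls$ to absorb the growth of the concentration integral, and since $r=1$ there is no obstruction to extending to all $s\in[0,T)$ rather than a short interval (the $L^\infty$ and $L^1$ bounds only improve in time and $\zeta=\zeta_0$ already gives the needed regularization at unit scale).

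The main obstacle is the \emph{competition} flagged in Remark~\ref{rem_competition}: the lower bound on the kernel regularizes and shrinks $\|\varphi\|_{L^\infty}$ (good for the $L^\infty$-condition) but the upper bound $\Lambda(1+|z|^\omega)$, together with the finite support radius $\zeta_0$ of the lower bound, acts like a source term that \emph{spreads} mass and degrades the concentration integral $\int|\varphi||x-x_0|^\gamma$. One must choose $A$, $\delta$, $L$, $\beta$ in the right order — $\beta$ small enough that the $(r/z)^\beta$ slack covers the $L^\infty$ and $L^1$ deficits, $L$ large enough that $z(r,s)^\gamma$ outruns the growth of the concentration integral on $[0,\delta r^\alpha]$, $\delta$ small enough that the regularization estimate is valid and that $z(r,s)\le 2r$ stays comparable to $r$, and $A$ large enough to seed the $L^\infty$ induction — all while keeping every constant dependent only on $\{\alpha,\zeta_0,\omega,\Lambda\}$ (and $\nu,s_0,\tau$ when $\alpha\ge1$). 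The case $\alpha\ge1$ additionally requires the cancellation-based interpretation of $T^K_t(|\cdot|^\gamma)$ as a locally integrable function from Lemma~\ref{lem_fractional_laplacian}, without which the concentration estimate would not even be well-posed; this is the reason for the constraint $\max\{\alpha-N,0\}<\gamma<\alpha-(\omega+\nu)$ on $\gamma$.
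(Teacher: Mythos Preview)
Your overall architecture is right — track the four conditions defining $\mathcal{U}_r$, preserve mean zero exactly, use the diffusion from the lower bound of the kernel to improve $L^\infty$, and bound the growth of the concentration integral via Lemma~\ref{lem_fractional_laplacian} — but there is a genuine gap in how you handle the $L^1$-condition and, consequently, in where the factor $(r/z)^\beta$ actually comes from.

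You write that $\|\varphi(s)\|_{L^1}\le 1$ is preserved by the maximum principle and ``requires no loss and no rescaling.'' But membership in $(r/z)^\beta\mathcal{U}_z$ means $(z/r)^\beta\varphi(s)\in\mathcal{U}_z$, so the $L^1$-condition you must verify is $\|\varphi(s)\|_{L^1}\le (r/z)^\beta$, which is \emph{strictly less than} $1$ since $z(r,s)>r$ for $s>0$. The maximum principle gives only $\le 1$; you need genuine $L^1$-\emph{decay}. This is exactly the content of Lemma~\ref{lem_L_1}, which you cite parenthetically but then do not use in the assembly: it exploits the mean-zero property (positive and negative mass must cancel inside a ball of radius $\sim r$, and the kernel's lower bound forces this cancellation to eat $L^1$-mass) to obtain $\|\varphi(s)\|_{L^1}\le 1-C_{L^1}\,s/r^\alpha$. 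Without this decay the factor $(r/z)^\beta$ cannot be produced, and your sentence ``$\le 1$, hence $\le (r/z)^\beta$ times something $\ge 1$'' is circular.

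The same misreading affects the other two conditions: the targets are not $A/z^N$ and $z^\gamma$ but the sharper $(r/z)^\beta A/z^N$ and $(r/z)^\beta z^\gamma$. The paper's mechanism for closing this is quantitative and dictates the order of choices: the concentration integral grows at rate $C_{\mathrm{conc}}A^{(\alpha-\gamma)/N}$ (Lemma~\ref{lem_concent}), $\|\varphi\|_{L^\infty}$ decays at rate $C_{L^\infty}A^{\alpha/N}$ (Lemma~\ref{lem_L_infty}), and $\|\varphi\|_{L^1}$ decays at rate $C_{L^1}$ independent of $A$. Since $(\alpha-\gamma)/N<\alpha/N$, one \emph{first} takes $A$ large so that the $L^\infty$-decay dominates the concentration growth — this fixes $L$ — and \emph{then} takes $\beta>0$ small so that $\beta L\le C_{L^1}$, which makes the $A$-independent $L^1$-decay cover the required $(r/z)^\beta$. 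Your proposed ordering (``$\beta$ small, then $L$ large, then $\delta$ small, then $A$ large'') does not respect this dependency structure and would not close.
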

%\begin{rem}
%The second part \eqref{eq_main_prop_r_1} of the above conclusion
%\end{rem}
\begin{proof}%[Proof of proposition \ref{main_prop}]
Let $\varphi_0\in\mathcal{U}_r\cap\mathcal{S}$ for some $0<r\leq1$. 
Then there exists a weak solution $\varphi$ corresponding to the initial data $\varphi_0$
(this can be proved by following
 %Komatsu 
 \cite{komatsu_fundamental}. Or refer to the approximation scheme
 in \cite{ccv}).
Moreover this solution is smooth,  and it lies in $L_t^\infty(H^b_x)$ for every integer $b\geq0$
due to the smoothness assumption \eqref{smooth_assump_lin} of 
$k$
%$k\in C^\infty_{t,x,y}\Big([0,T]\times\overline{\mathbb{R}^N}\times\overline{\mathbb{R}^N})\Big)$
(it
can be proved by using
 a standard energy argument).\\
 
 First we state  the following elementary inequalities
 without   proof.
 \begin{lem}\label{lem_elemen} % We have the following elementary inequalties:\\
   (I). $(1-x)\leq\frac{1}{1+x}$ for $x\geq0$.\\
   (II). $(1+\frac{\eta}{2}x)\leq (1+x)^{\eta}$ for 
 any $0\leq x\leq 1$ if $ 0\leq\eta\leq1$.\\
   (III). $(1+x)^{\eta}\leq (1+{\eta}x)$ for any $x\geq0$ if $ 0\leq\eta\leq1$.\\
   (IV). $(1+x)^{\eta}\leq (1+2{\eta}x) $ for any $0<x<C_\eta$  if $ \eta\geq1$.
%  (e.g. $ C_\eta=\min\{\frac{2}{\eta-1},
% \frac{2}{\eta-1}\cdot 3^{2-\eta}\})$
 \end{lem}
 %\begin{proof}
 % These elementary inequalities can be proven directly
 %by using Taylor's expansion with its error estimate. 
 %\end{proof}

 In order to obtain \eqref{eq_main_prop}, we need to verify 
 the mean zero, the concentration, the $L^\infty$, 
and the $L^1$ conditions. First the mean-zero condition is easily verified
in {STEP 1}.
 Second, we derive  some estimates for remained three other conditions
 in {STEP 2-4}. Then, in {STEP 5}, we combine all the estimates we obtained
 in {STEP 2-4}
 to finish the proof. 
 Without loss of generality, we can assume that a center of $\varphi_0$ is the origin (i.e. $x_0=0$).\\
%\end{proof}

%\subsection{mean zero-condition}
\textbf{STEP 1.} Mean zero-condition.\\
%**need to show $\frac{d}{dt}\int \varphi(t,x)dx=0$\\
From (II) of Lemma \ref{lem_property_kernel}, we have, for any $t\in(0,T)$,
\begin{equation*}\begin{split}
 \frac{d}{dt}
\int_{\mathbb{R}^N}  \varphi(t,x)dx
 =& \int_{\mathbb{R}^N} \Big(\frac{\partial}{\partial t}\varphi\Big) (t,x)dx\\
 =& -\int_{\mathbb{R}^N} 
T^K_t(\varphi(t,\cdot))(x)dx=0.\\
\end{split}\end{equation*} 
%We used
%(I) of the lemma \eqref{lem_property_equation_with_kernel}.
%**i.e. we need $\int T^K_t(\varphi)(x)dx=0$.\\

%Mean zero condition is preserved by the structure of the equation.

%\subsection{Concentration-condition}
\textbf{STEP 2.} Concentration-condition.
\begin{lem}\label{lem_concent} 
 There exists a constant
 $C_{conc}>0$
such that,
for any $s\in(0,T)$, we have %\in[0,\delta r^{{{{\alpha}}}}]$,
\begin{equation}\label{eq_concent}
  \int_{\mathbb{R}^N} | \varphi(s,x)||x|^\gamma dx\leq 
  r^\gamma(1+
  C_{conc}
  %\Lambda 
  A^{\frac{{{{\alpha}}}-\gamma}{N}}\frac{s}{r^{{{{\alpha}}}}})
%\begin{cases}& r^\gamma(1+
%C_{conc}
%%\Lambda 
%A^{\frac{{{{\alpha}}}-\gamma}{N}}\frac{s}{r^{{{{\alpha}}}}}),
%\quad\mbox{ if } \alpha<1\\
% & r^\gamma(1+
%C_{conc}
%%(\Lambda+\bar{\tau})
% A^{\frac{{{{\alpha}}}-\gamma}{N}}\frac{s}{r^{{{{\alpha}}}}}),
%  \quad\mbox{ if } \alpha\geq 1.\\
%\end{cases}
\end{equation} where $C_{conc}$ does not 
depend on $A$ as long as $A\geq1$.

\end{lem}
\begin{rem}
This lemma says that test functions lose their concentration
  with certain rate as time goes on.
In Step 5, it will be shown that the  rate
can be absorbed into the regularization effect from the $L^1$ and the $L^\infty$
conditions.

\end{rem}
\begin{proof}

%**need $\partial_t (|w|)+T^K_t(|w|)\leq 0$ (distributional sense?)\\
%**i.e. need $\partial_t (\eta(w))+T^K_t(\eta(w))\leq 0$ for convex $\eta$ with limit argument\\
\begin{equation*}\begin{split}
 \frac{d}{ds}
\int_{\mathbb{R}^N} | \varphi(s,x)||x|^\gamma dx
 =& \int_{\mathbb{R}^N} \frac{\partial}{\partial s}\Big(| \varphi(s,x)|\Big)|x|^\gamma dx\\
\leq& \int_{\mathbb{R}^N} -T_s\Big(| \varphi(s,x)|\Big)\cdot|x|^\gamma dx\\
=& \int_{\mathbb{R}^N} -T_s\Big(|x|^\gamma\Big)\cdot | \varphi(s,x)|dx\\
\leq& \int_{\mathbb{R}^N} \Big|T_s\Big(|x|^\gamma\Big)\Big|\cdot
 | \varphi(s,x)|dx=(I)
\end{split}\end{equation*} where we used Lemma \ref{lem_property_equation_with_kernel} and Lemma \ref{lem_property_kernel}.\\
First, consider  the case $\alpha<1$. Then, thanks to Lemma \ref{lem_fractional_laplacian}, we have
\begin{equation*}\begin{split}
(I)\leq &C
%\Lambda
\int_{\mathbb{R}^N} |x|^{{\gamma}-{{{\alpha}}}}\cdot
(1+|x|^\omega)\cdot | \varphi(s,x)|dx\\
=& C
%\Lambda
\Big(\int_{B(A^{-1/N}r)} |x|^{{\gamma}-{{{\alpha}}}}\cdot 
(1+|x|^\omega)\cdot| \varphi(s,x)|dx
\\
&\quad\quad\quad +\int_{B(A^{-1/N}r)^C}|x|^{{\gamma}-{{{\alpha}}}}\cdot
  | \varphi(s,x)|dx\\
 &\quad\quad\quad +\int_{B(A^{-1/N}r)^C}|x|^{{\gamma}-{{{\alpha}}}+\omega}\cdot
   | \varphi(s,x)|dx\Big).
   \end{split}\end{equation*} 
From the condition $\gamma<(\alpha-\omega)$,
we have
%For the second inequality, 
decreasing of the functions
 $|\cdot|^{{\gamma}-{{{\alpha}}}}$ 
and  $|\cdot|^{{\gamma}-{{{\alpha}}}+\omega}$.
Also, note that 
 $L^\infty$ and $L^1$ norms are decreasing
and $A^{-{1}/{N}}\cdot r\leq 1$ from $A\geq 1$ and $r\leq 1$. Thus we have
   \begin{equation*}\begin{split}
\leq& C
%\Lambda
\Big(
%N\cdot V_N\cdot
(A^{-1/N}r)^{\gamma-{{\alpha}}+N} \cdot
(1+(A^{-1/N}r)^\omega)\cdot \| \varphi(s)\|_{L^\infty}\\
&\quad\quad\quad +(A^{-1/N}r)^{\gamma-{{\alpha}}}\| \varphi(s)\|_{L^1}
+(A^{-1/N}r)^{\gamma-{{\alpha}}+\omega}\| \varphi(s)\|_{L^1}\Big)\\
%\leq& C
%%\Lambda
%\Big(
%%N\cdot V_N\cdot
%(A^{-1/N}r)^{\gamma-{{\alpha}}+N}\cdot 2 \cdot Ar^{-N}
% +(A^{-1/N}r)^{\gamma-{{\alpha}}}
% +(A^{-1/N}r)^{\gamma-{{\alpha}}+\omega}\Big)\\
\leq& C
%\Lambda(N\cdot V_N+1)
 A^{\frac{{{\alpha}}-\gamma}{N}}r^{\gamma-{{\alpha}}}.\\
\end{split}\end{equation*} 
%for the third inequality.\\
%**need nonincreasing of $L^\infty$ and $L^1$ norms\\
Likewise, for the case $\alpha\geq1$, Lemma 
\ref{lem_fractional_laplacian}
with $\gamma<\alpha-(\nu+\omega)$
 gives us the same conclusion. 
 \begin{comment}
\begin{equation*}\begin{split}
(I)\leq& 
C(\Lambda+\bar{\tau})(N\cdot V_N+1) A^{\frac{{{\alpha}}-\gamma}{N}}r^{\gamma-{{\alpha}}}\\
%C\Lambda\int_{\mathbb{R}^N} |x|^{{\gamma}-{{{\alpha}}}}\cdot 
%| \varphi(s,x)|dx
%+C\tau\int_{\mathbb{R}^N} |x|^{{\gamma}-{{{\alpha}}}+\nu}\cdot | \varphi(s,x)|dx\\
%\leq& C\Lambda A^{\frac{{{\alpha}}-\gamma}{N}}r^{\gamma-{{\alpha}}}
%+C\tau A^{\frac{{{\alpha}}-\gamma-\nu}{N}}r^{\gamma-{{\alpha}}+\nu}
%\leq C(\Lambda+\tau) A^{\frac{{{\alpha}}-\gamma}{N}}r^{\gamma-{{\alpha}}}
\end{split}\end{equation*}
%For the last inequality,
%we used $A^{-{\nu}/{N}}\cdot r^\nu\leq 1$ from $A\geq 1$ and $r\leq 1$.\\
\end{comment}
%Because $\int_{\mathbb{R}^N} | \varphi(0,x)||x|^\gamma dx\leq r^\gamma$, 
Then, we have
\eqref{eq_concent} thanks to the initial condition $\int_{\mathbb{R}^N} | \varphi(0,x)||x|^\gamma dx\leq r^\gamma$.
\end{proof}

%\subsection{$L^\infty$-condition}
\textbf{STEP 3.} $L^\infty$-condition.
\begin{lem}\label{lem_L_infty}
 There exist two constants $\delta_{L^\infty}>0$
and $C_{L^\infty}>0$
such that,
for any $s\in[0,\min\{\delta_{L^\infty} r^{{\alpha}},T\}]$, we have
\begin{equation}\label{eq_l_infty}
  \|\varphi(s,\cdot)\|_{L^\infty({\mathbb{R}^N})}\leq \frac{A}{r^N}(1-
  %{\Lambda^{-1}}\cdot 
  C_{L^\infty}A^{\frac{{\alpha}}{N}}
\frac{s}{r^{{\alpha}}})
\end{equation}  where $C_{L^\infty}$ does not 
depend on $A$ as long as $A\geq1$.

\end{lem}
\begin{rem}
This lemma is proved by using the lower bound of \eqref{cond_bounds},
which gives us some regularization effect.
 We follow a similar argument of Theorem 4.1 in
 the paper 
 C{\'o}rdoba  and C{\'o}rdoba \cite{cordoba_maximum}, % C{\'o}rdoba and C{\'o}rdoba 
% (p522)
  which
showed a $L^\infty$ decay for smooth solutions of the 2D surface QG equation. \\
%Here we have to
%be careful because the lower bound of the kernel $k$
%%function ${\Lambda^{-1}}\cdot 1_{|x-y|\leq\zeta}$ 
%%of \eqref{cond_bounds} %our integral representation
%has a  bounded support with the radius $\zeta$.\\

\end{rem}
\begin{proof}

\begin{comment}
First, we note that
%Cordoba's
its argument needs an existence of maximum point for each time. A simple way
is to get a higher regularity of $\varphi(t,\cdot)$ for each time:\\
%(e.g. $f(\cdot)\in H^b(\mathbb{R}^N)$ for some $b\geq(N/2)$ implies
%that $f$ vanishes at infinity. Thus if $f$ is continous, then 
%$f$ achieves the maximum at a certain point.)
Let $b$ be any integer such that $(N/2)\leq b$.
%Let $b$ be the integer such that $(N/2)\leq b<(N/2)+1$.
 From the fact $\varphi_0\in\mathcal{S}(\mathbb{R}^N)$ together with 
$k(t,\cdot,\cdot)
\in W^{b,\infty}
(\mathbb{R}^N\times\mathbb{R}^N)$, it can be shown
that $\varphi(t,\cdot)$ lies in 
$H^{b}(\mathbb{R}^N)$ for any $t\in[0,T]$ (see Appendix). Then, thanks to Fourier transform,
%It implies that
 $\varphi(t,\cdot)$ vanishes at infinity so that
it has a point $x_t$ where $L^\infty$ norm of $\varphi(t,\cdot)$ is achieved 
due to continuity of $\varphi(t,\cdot)$:
i.e. $|\varphi(t,x_t)|=\|\varphi(t,\cdot)\|_{L^\infty}$. 
\end{comment}
\begin{comment}
\begin{rem}
Here we used the fact  that
$k(t,\cdot,\cdot)$ lies in $W^{b,\infty}
(\mathbb{R}^N\times\mathbb{R}^N)$
to be sure that there exists a maximum point for each time. However it is clear
that we did not use any $W^{b,\infty}-$norm of $k$. Hence, any following
estimates
do not depend on any $W^{b,\infty}-$norm of $k$.\\

My argument will be \\
k measurable\\
mollify k\\
$k*\phi_\epsilon$ $(\epsilon\leq1)$ satisfy the same $(*)$-kernel condition (except cancellation)\\
$k*\phi_\epsilon \in$ $ H^n$ every $n$ \\
so $ k*\phi_\epsilon \in W^{\lceil (N/2)\rceil,\infty}$ by Sobolev\\
gives $\varphi(t,\cdot) \in H^{\lceil (N/2)\rceil}(\mathbb{R}^N)$ for each 
$\epsilon$ so that a limit shares conclusion\\a 

\end{rem}
\end{comment}

First, we define
$M(t):=\|\varphi(t,\cdot)\|_{L^\infty}$. 
%We take $M>1$ large so that $\frac{\zeta^{-N}}{V_N\cdot M}\leq 1/2$. 
We claim that
there exist $\delta_{1}>0$ and $C_1>0$ such that
for any $t\in[0,\delta_{1}r^\alpha]$ satisfying
$M(t)\geq \frac{1}{2}\frac{A}{r^N}$, we have
 \begin{equation}\label{claim_infty}
M(t)\leq
\frac{A}{r^N}(1-
%{\Lambda^{-1}}\cdot
 C_1A^{\frac{{\alpha}}{N}}
\frac{t}{r^{{\alpha}}}).\end{equation}

% i.e. 
%$t\leq \frac{1}{2{\Lambda^{-1}}\cdot C_1 A^{\alpha/N}}r^\alpha\leq\delta_{L^\infty}\cdot %r^\alpha.$
% If  $M(t)> \frac{1}{2}\frac{A}{r^N}$, then we use \eqref{claim_infty}.
%Thus only thing left is to prove the above claim \eqref{claim_infty}. \\

To prove the above claim \eqref{claim_infty}, first pick any $t\in(0,T)$ such that 
\begin{equation*}M(t)\geq \frac{1}{2}\cdot\frac{A}{r^N}.\end{equation*}
Then we know $M(t^*)\geq \frac{1}{2}\frac{A}{r^N}$ for all $t^*<t$
from Lemma \ref{lem_property_equation_with_kernel}.
It can be easily proved that there exists a point $x_t$ such that $|\varphi(t,x_t)|=M(t)$. Indeed,
because our kernel lies in $C^\infty_{t,x,y}\Big([0,T]\times\overline{\mathbb{R}^N}\times\overline{\mathbb{R}^N})\Big)$
with $\varphi_0\in\mathcal{S}$, we can show $\varphi\in L^\infty_t H^d$ for every integer $d\geq 0$
by standard energy estimates. In particular, $\varphi(t,\cdot)\in H^b$ for some integer $b>(N/2)$ for every time. Then,
$\varphi(t,\cdot)$ vanishes at the infinity thanks to a Fourier transform argument. Since $\varphi(t,\cdot)$
is continuous, there exists a maximum (or minimum) point.\\

Then, for almost every time $t\in(0,T)$, there exist a point $\tilde{x}_{t}$ 
%which reaches the maximum of $\varphi$, which means
such that
 $|\varphi(t,\tilde{x}_{t})|=M(t)$ % and it satisfies 
 with
  the following inequality:
\begin{equation*}
\frac{d}{dt}M(t)\leq 
\begin{cases}
&\Big(\frac{\partial \varphi}{\partial t}\Big)(t,\tilde{x}_{t}) 
     \quad\mbox{ if } \varphi(t,\tilde{x}_{t})=M(t),\\ 
&-\Big(\frac{\partial \varphi}{\partial t}\Big)(t,\tilde{x}_{t}) 
     \quad\mbox{ if } \varphi(t,\tilde{x}_{t})=-M(t)                
\end{cases}
\end{equation*} (this can be proved
by following
the argument of \cite{cordoba_maximum}).\\

We assume the first case $\varphi(t,\tilde{x}_{t})=M(t)>0$ (the other one
can be dealt in similar fashion). 
%We fix $R$ so that $V_N R^N=\frac{4}{M(t)}$
%where $V_N$ is the volume of unit sphere in $\mathbb{R}^N$. 
Then
\begin{equation}\begin{split}\label{infty_inequal}
\frac{d}{dt}M(t)&\leq 
\Big(\frac{\partial \varphi}{\partial t}\Big)(t,\tilde{x}_{t}) 
\leq -T_t^K(\varphi(t,\cdot))(x)\\
&=-\int_{\mathbb{R}^N}\Big({\varphi(t,\tilde{x}_{t})-\varphi(t,y)}\Big)K(t,\tilde{x}_{t},y)dy\\
&\leq-{\Lambda^{-1}}\int_{|\tilde{x}_{t}-y|\leq\zeta}\frac{\varphi(t,\tilde{x}_{t})
-\varphi(t,y)}{|\tilde{x}_{t}-y|^{N+\alpha}}dy=-{\Lambda^{-1}}\cdot(*).\\
%&\leq{\Lambda^{-1}}\int_{|\tilde{x}_{t}-y|\leq R}\frac{\varphi(t,y)-\varphi(t,\tilde{x}_{t})}{|\tilde{x}_{t}-y|^{N+\alpha}}k(t,\tilde{x}_{t},y)dy
  \end{split} 
   \end{equation} 
We used the fact ${\varphi(t,\tilde{x}_{t})-\varphi(t,y)}\geq 0$
with the lower bound of the kernel \eqref{cond_bounds}.\\

Let $R$ be any number between $0$ and $\zeta$, which will be
chosen soon. We separate the ball $B_R(\tilde{x}_{t})$
%the above integral
into two disjoint regions $\Omega_1$ and $\Omega_2$ by the following way:
$\Big({\varphi(t,\tilde{x}_{t})-\varphi(t,y)}\Big)>\frac{1}{2}
\varphi(t,\tilde{x}_{t})$  implies $y\in\Omega_1$. Otherwise, $y\in\Omega_2$. Then we have 
the following upper bound
of measure of $\Omega_2$: 
\begin{equation*}
|\Omega_2|
%=\int_{\Omega_2} 1dy
=\frac{2}{M(t)}\int_{\Omega_2}\frac{M(t)}{2}dy
\leq\frac{2}{M(t)}\int_{\Omega_2}\varphi(t,y)dy
\leq\frac{2}{M(t)}\|\varphi(t,\cdot)\|_{L^1(\mathbb{R}^N)}\leq\frac{2}{M(t)}.
\end{equation*} As a result,
from $R<\zeta$, we have
\begin{equation*}\begin{split}
(*)&\geq
\int_{|\tilde{x}_{t}-y|\leq R}\frac{\varphi(t,\tilde{x}_{t})
-\varphi(t,y)}{|\tilde{x}_{t}-y|^{N+\alpha}}dy %\quad\mbox{ for any }0<R\leq\zeta  \\
\geq
\int_{\Omega_1}\frac{\varphi(t,\tilde{x}_{t})
-\varphi(t,y)}{|\tilde{x}_{t}-y|^{N+\alpha}}dy  \\
&\geq \frac{M(t)}{2R^{N+\alpha}}|\Omega_1|
= \frac{M(t)}{2R^{N+\alpha}}(|B_R(\tilde{x}_t)|-|\Omega_2|) 
\geq \frac{M(t)}{2R^{N+\alpha}}(V_N R^N-\frac{2}{M(t)}).
  \end{split} 
   \end{equation*}

%For any $ t\in[0,T]$ such that
Now we choose $R$ by  $V_N R^N=\frac{2}{M(t)}\cdot 2$. Then, it is clear
that $R\leq\zeta$ 
%once if we assume $M(t)\geq \frac{1}{2}\frac{A}{r^N}$ 
because $M(t)\geq \frac{1}{2}\frac{A}{r^N}\geq
\frac{1}{2}\frac{1}{r^N}\geq \frac{1}{2}$ and
%\begin{equation*}
$ R=\Big(\frac{4}{M(t)V_N}\Big)^{1/N}\leq
 \Big(\frac{8}{V_N}\Big)^{1/N}\leq\zeta$ by \eqref{def_zeta_0}.
%\end{equation*} 
Coming
back to \eqref{infty_inequal}, we have
\begin{equation*}\begin{split}
\frac{d}{dt}M(t)
&\leq -{\Lambda^{-1}}\frac{M(t)}{2R^{N+\alpha}}(V_N R^N-\frac{2}{M(t)})
= -{\Lambda^{-1}}\frac{M(t)}{2R^{N+\alpha}}\cdot\frac{2}{M(t)}
=-{\Lambda^{-1}}\frac{1}{R^{N+\alpha}}\\
&=-{\Lambda^{-1}}  {\Big(\frac{M(t)V_N}{4}\Big)^{(N+\alpha)/N}}
=-
%{\Lambda^{-1}}  {\Big(\frac{V_N}{4}\Big)}^{1+\frac{\alpha}{N}}
C
\cdot M(t)^{1+\frac{\alpha}{N}}.
  \end{split} 
   \end{equation*} Solving this differential inequality, we obtain
\begin{comment}\begin{equation*}\begin{split}
M(t)
&\leq \frac{M(0)}{\Big(1+
{\Lambda^{-1}}\cdot\frac{\alpha}{N}\cdot
{(\frac{V_N}{2\cdot 2})}^{1+\frac{\alpha}{N}}
\cdot M(0)^{\alpha/N}\cdot t\Big)^{N/\alpha}}\\
&\leq \frac{A/r^N}{\Big(1+{\Lambda^{-1}}\cdot\frac{\alpha}{N}\cdot
{(\frac{V_N}{2\cdot 2})}^{1+\frac{\alpha}{N}}
\cdot {(A/2r^N)}^{\alpha/N}\cdot t\Big)^{N/\alpha}}\\
&= \frac{A}{r^N}\frac{1}{\Big(1+{\Lambda^{-1}}\cdot\frac{\alpha}{N}\cdot
{(\frac{V_N}{2\cdot 2})}^{1+\frac{\alpha}{N}}
\cdot {(1/2)}^{\alpha/N}\cdot A^{\alpha/n}\cdot \frac{t}{r^\alpha}\Big)^{N/\alpha}}\\
&\leq \frac{A}{r^N}\frac{1}{\Big(1+{\Lambda^{-1}}\cdot
{(\frac{V_N}{2\cdot 2})}^{1+\frac{\alpha}{N}}
\cdot {(1/2)}^{\alpha/N}\cdot A^{\alpha/n}\cdot \frac{t}{r^\alpha}\Big)}\\  
&\leq \frac{A}{r^N}{\Big(1-\frac{1}{2}\cdot{\Lambda^{-1}}\cdot
{(\frac{V_N}{2\cdot 2})}^{1+\frac{\alpha}{N}}
\cdot {(1/2)}^{\alpha/N}\cdot A^{\alpha/n}\cdot \frac{t}{r^\alpha}\Big)}
\mbox{  }
\end{split} 
   \end{equation*}\end{comment}
\begin{equation*}\begin{split}
M(t)
&\leq {M(0)}{\Big(1+
%{\Lambda^{-1}}\cdot\frac{\alpha}{N}\cdot
%{(\frac{V_N}{2\cdot 2})}^{1+\frac{\alpha}{N}}
C
\cdot M(0)^{\alpha/N}\cdot t\Big)^{-N/\alpha}}\\
\end{split} 
   \end{equation*} From the fact $\frac{1}{2}\frac{A}{r^N}\leq M(t)\leq M(0)\leq \frac{A}{r^N}$, we have
   \begin{equation*}\begin{split}
&\leq \frac{A}{r^N}{\Big(1+
%{\Lambda^{-1}}\cdot\frac{\alpha}{N}\cdot
%{(\frac{V_N}{2\cdot 2})}^{1+\frac{\alpha}{N}}
C
\cdot {\Big(\frac{1}{2}\cdot\frac{A}{r^N}\Big)}^{\alpha/N}\cdot t\Big)^{-N/\alpha}}\\
%&= \frac{A}{r^N}\frac{1}{\Big(1+{\Lambda^{-1}}\cdot\frac{\alpha}{N}\cdot
%{(\frac{V_N}{2\cdot 2})}^{1+\frac{\alpha}{N}}
%\cdot {(1/2)}^{\alpha/N}\cdot A^{\alpha/n}\cdot \frac{t}{r^\alpha}\Big)^{N/\alpha}}\\
\end{split} 
   \end{equation*}
  For any $p>0$, it is easy to see $(1+x)^{-p}\leq(1-\frac{1}{2}p x)$ for $0\leq x\leq C_p$. Thus, we have
   \begin{equation*}\begin{split}
&\leq \frac{A}{r^N}{\Big(1-
%{\Lambda^{-1}}\cdot
%{(\frac{V_N}{2\cdot 2})}^{1+\frac{\alpha}{N}}
%\cdot {(1/2)}^{\alpha/N}
C_1
\cdot A^{\alpha/N}\cdot \frac{t}{r^\alpha}\Big)}\\  
%&\leq \frac{A}{r^N}{\Big(1-\frac{1}{2}\cdot{\Lambda^{-1}}\cdot
%{(\frac{V_N}{2\cdot 2})}^{1+\frac{\alpha}{N}}
%\cdot {(1/2)}^{\alpha/N}\cdot A^{\alpha/n}\cdot \frac{t}{r^\alpha}\Big)}
%\mbox{  }
\end{split} 
   \end{equation*}
 as long as $  {t}\leq C_1^{-1}A^{-\alpha/N}{r^\alpha}$.   
   % as long as %${\Lambda^{-1}}\cdot
%{(\frac{V_N}{2\cdot 2})}^{1+\frac{\alpha}{N}}
%\cdot {(1/2)}^{\alpha/N}\cdot A^{\alpha/N}\cdot \frac{t}{r^\alpha}\leq %1$.
%$  {t}{r^\alpha}\leq CA^{-\alpha/N}{r^\alpha}$.
%(or $t\leq \delta_1\cdot r^\alpha$ where 
%$\delta_1=\Big({\Lambda^{-1}}\cdot
%{(\frac{V_N}{2\cdot 2})}^{1+\frac{\alpha}{N}}
%\cdot {(1/2)}^{\alpha/N}\cdot A^{\alpha/n}\Big)^{-1}$).
%From $A\geq1$, the above inequality holds once $t\leq C_1\cdot r^\alpha$
%where $C_1=\Big({\Lambda^{-1}}\cdot
%{(\frac{V_N}{2\cdot 2})}^{1+\frac{\alpha}{N}}
%\cdot {(1/2)}^{\alpha/N}\cdot A^{\alpha/n}\Big)^{-1}$.
%For the second inequality, we used
%the fact $\frac{1}{2}\frac{A}{r^N}\leq M(t)\leq M(0)\leq \frac{A}{r^N}$.
%while, for the third and fourth ones, we used
%$(1+x)^\beta\geq (1+\beta x)$ for $\beta\geq1$ 
%and $1/(1+x)\leq(1-\frac{1}{2}x)$ for $0\leq x\leq 1$.\\
By taking  $\delta_1:= C_1^{-1}A^{-\alpha/N}$,
%$C_1={\Lambda^{-1}}\cdot\frac{1}{2}\cdot
%{(\frac{V_N}{2\cdot 2})}^{1+\frac{\alpha}{N}}
%\cdot {(1/2)}^{\alpha/N}$, 
we proved the claim \eqref{claim_infty}
under the assumption $M(t)\geq \frac{1}{2}\frac{A}{r^N}$.\\

Thanks to \eqref{claim_infty}, the whole case \eqref{eq_l_infty} % including $M(t)\leq 1/2$
 can be achieved easily by taking 
 $\delta_{L^\infty}:=
 \frac{1}{2}\delta_1$ and 
 $C_{L^\infty}:=C_1$.
 %\min\{\delta_1,\frac{1}{2
 %%{\Lambda^{-1}}\cdot
 % C_1 A^{\alpha/N}}\}$.
  Indeed, 
% Indeed, 
if $M(t)\leq \frac{1}{2}\frac{A}{r^N}$, then
%, from the maximum principle, 
we have
\begin{equation*}
M(t)
%\leq M(0)
%\leq\frac{1}{2}\leq\frac{1}{2}\frac{1}{r^N}
\leq\frac{1}{2}\frac{A}{r^N}
\leq\frac{A}{r^N}(1-
%{\Lambda^{-1}}\cdot 
C_1A^{\frac{{\alpha}}{N}}
\frac{t}{r^{{\alpha}}})
\end{equation*} as long as 
 $  {t}\leq \frac{1}{2}C_1^{-1}A^{-\alpha/N}{r^\alpha}=\frac{1}{2}\delta_1{r^\alpha}$.\\
 
%  Finally, we take 
% $\delta_{L^\infty}:=\frac{1}{2}\delta_1$, and %$C_{L^\infty}:=C_1$
% to finish the proof of this lemma \ref{lem_L_infty}.
%$
%%{\Lambda^{-1}}\cdot 
%C_1A^{\frac{{\alpha}}{N}}
%\frac{t}{r^{{\alpha}}}\leq(1/2)$.\\

% As a result,
%Lemma \ref{lem_L_infty} follows.
%\begin{equation}\begin{split}
%M(t)
%&\leq \frac{A}{r^N}{\Big(1-{\Lambda^{-1}}\cdot C_1
%\cdot \frac{t}{r^\alpha}\Big)}
%\mbox{  }
%\end{split} 
%   \end{equation} 

\end{proof}

%\subsection{$L^1$-condition}
\textbf{STEP 4.} $L^1$-condition.
\begin{lem}\label{lem_L_1}
 There exist two constants $\delta_{L^1}>0$
and $C_{L^1}>0$
such that,
for any $s\in[0,\min\{\delta_{L^1} r^{{\alpha}},T\}]$, we have
\begin{equation}\label{eq_L_1}
  \|\varphi(s,\cdot)\|_{L^1({\mathbb{R}^N})}\leq (1-
 % {\Lambda^{-1}}  \cdot 
 C_{L^1}\cdot\frac{s}{r^{{\alpha}}})
\end{equation}  where $C_{L^1}$ does not 
depend on $A$ as long as $A\geq1$.
\end{lem}

\begin{rem}
In this time, we obtain $L^1$ decay by using the lower bound
of the kernel \eqref{cond_bounds}.
In general, without the mean zero property,
we do not expect $L^1$ decay (refer to \cite{cordoba_maximum}).
However, with the mean zero property, we can manage
certain  amount of cancellation
of the $L^1$-norm.
 This idea comes from the argument in \cite{variation_kiselev} where
 $L^1$ decay for
mean-zero solutions for the 2D-SQG equation in a periodic setting was obtained.
\end{rem}

\begin{proof}
%Thus only thing left is to prove the above claim \eqref{claim_L_1}. 
First, by using \eqref{eq_concent}, we can find
$\delta_{2}$ such that 
$ \int_{\mathbb{R}^N} | \varphi(s,x)||x|^\gamma dx\leq \frac{11}{10}r^\gamma$
 for all $t\in[0,\delta_{2}r^\alpha]$.
i.e. we take $\delta_2$ so small that
$C_{conc}
%\Lambda 
A^{\frac{{{{\alpha}}}-\gamma}{N}}\delta_{2}
\leq\frac{1}{10}$.\\ % if $\alpha<1$. (or if $\alpha\geq1$, 
%$C(\Lambda+\tau)A^{\frac{{{{\alpha}}}-\gamma}{N}}\delta_{2}
%=\frac{1}{10}$ )

We claim that
there exists a constant $C_2>0$ such that
for any $t\in[0,\delta_{2}r^\alpha]$ satisfying
$\|\varphi(t)\|_{L^1(\mathbb{R}^N)}\geq 
\frac{9}{10}$, we have
 \begin{equation}\label{claim_L_1}
 \|\varphi(t,\cdot)\|_{L^1({\mathbb{R}^N})}\leq (1-
% {\Lambda^{-1}}\cdot
  C_2\cdot\frac{t}{r^{{\alpha}}}).\end{equation}

To prove \eqref{claim_L_1}, let $t\in[0,\delta_{2}r^\alpha]$ satisfy 
$\|\varphi(t,\cdot)\|_{L^1(\mathbb{R}^N)}\geq \frac{9}{10}$.
For simplicity, we define $a:=(11)^{1/\gamma}$. 
Then, from \eqref{def_zeta_0}, we know  \begin{equation}\label{2a_leq_zeta}
              2a\leq\zeta  %\quad\mbox{ and }
             \end{equation}  
 and  the following estimates hold:
\begin{equation}\label{L_1_estimate_ball_ar}
 \int_{B(ar)}|\varphi(t,x)|dx\geq\frac{8}{10},
\int_{B(ar)}\varphi^+(t,x)dx\geq\frac{3}{10}, \mbox{ and }
\int_{B(ar)}\varphi^-(t,x)dx\geq\frac{3}{10}.
\end{equation} where $f^+:=\max\{f,0\}$ and $f^-:=\max\{-f,0\}$.

Indeed, from  the concentration condition, 
we obtain the following upper bound of $L^1$-norm outside of the ball $B(ar)$: % away from the origin: 
\begin{equation*}
 \int_{B(ar)^C}|\varphi(t,x)|dx
=\int_{B(ar)^C}|\varphi(t,x)||x|^\gamma |x|^{-\gamma} dx\leq 
\frac{11}{10}r^\gamma (ar)^{-\gamma}=\frac{1}{10}.
\end{equation*} 
Then, thanks to the mean-zero property, 
we get the following lower bounds of $\|\varphi\|_{L^1(B(ar))}$,
$\|\varphi^+\|_{L^1(B(ar))}$ and
$\|\varphi^-\|_{L^1(B(ar))}$:
\begin{equation*}
 \int_{B(ar)}|\varphi(t,x)|dx
=\|\varphi(t,\cdot)\|_{L^1(\mathbb{R}^N)}-
\int_{B(ar)^C}|\varphi(t,x)| dx\geq 
\frac{9}{10}-\frac{1}{10}=\frac{8}{10},
\end{equation*} 
\begin{equation*}\begin{split}
& \int_{B(ar)}\varphi^{\pm}(t,x)dx
=\int_{\mathbb{R}^N}\varphi^{\mp}(t,x)dx-
\int_{B(ar)^C}\varphi(t,x)^{\pm} dx \\ 
&= 
\frac{1}{2}\|\varphi(t,\cdot)\|_{L^1(\mathbb{R}^N)}-
\int_{B(ar)^C}\varphi(t,x)^{\pm} dx\\
&\geq 
\frac{1}{2}\|\varphi(t,\cdot)\|_{L^1(\mathbb{R}^N)}-
\int_{B(ar)^C}|\varphi(t,x)| dx
\geq 
\frac{9}{20}-\frac{1}{10}>\frac{3}{10}.
\end{split}\end{equation*} 
%\begin{equation}
%\mbox{Likewise, } \int_{B(ar)}\varphi^-(t,x)dx\geq\frac{3}{10}.
%\end{equation} 
We denote symbols $D^s_+,D^s_-$ and $S^s$ by
\begin{equation*}\begin{split}                 
 &D^s_{\pm}=\{x\in\mathbb{R}^N |\quad  \pm \varphi(s,x)\geq 0\} 
 \mbox{ and}\\
&S^s=\{x\in\mathbb{R}^N |\quad   \varphi(s,x)= 0\}.
                \end{split}
\end{equation*}
Then, we have
\begin{equation*}\begin{split}                 
&\frac{d}{dt}\int_{\mathbb{R}^N} |\varphi(s,x)|dx 
=\int_{\mathbb{R}^N} \frac{\partial}{\partial t}|\varphi(s,x)|dx
=\int_{\mathbb{R}^N} \Big(1_{D^s_+}(x)-1_{D^s_-}(x)\Big)(\partial_t
\varphi)(s,x)dx\\
&=-\int_{\mathbb{R}^N} \Big(1_{D^s_+}(x)-1_{D^s_-}(x)\Big)
T^K_s(\varphi(s,\cdot))(x)
dx\\
&=-\int_{\mathbb{R}^N} \Big(1_{D^s_+}(x)-1_{D^s_-}(x)\Big)
\int_{\mathbb{R}^N}[\varphi(s,x)-\varphi(s,y)]K(s,x,y)dy
dx\\
%&=-\int_{\mathbb{R}^N} \Big(1_{D^s_+}(x)-1_{D^s_-}(x)\Big)
%\int_{\mathbb{R}^N}\frac{\varphi(s,x)-\varphi(s,y)}{|x-y|^{N+\alpha}}k(t,x,y)dy
%dx\\
&=-\frac{1}{2}\int_{\mathbb{R}^N} \int_{\mathbb{R}^N}
\Big[\Big(1_{D^s_+}(x)-1_{D^s_-}(x)\Big)
-\Big(1_{D^s_+}(y)-1_{D^s_-}(y)\Big)\Big]
\Big({\varphi(s,x)-\varphi(s,y)}\Big)\cdot\\
&\quad\quad\quad\quad\quad\quad\quad\quad\quad\quad\quad\quad
\quad\quad\quad\quad\quad\quad\quad\quad\quad\quad
K(s,x,y)dy
dx\\
&=-\frac{1}{2}\int_{\mathbb{R}^N} \int_{\mathbb{R}^N}
(*)dy
dx\\
 \end{split}\end{equation*} where we use the simplification $(*):=$
\begin{equation*}
%(*)=
\Big[\Big(1_{D^s_+}(x)-1_{D^s_-}(x)\Big)
-\Big(1_{D^s_+}(y)-1_{D^s_-}(y)\Big)\Big]
\Big({\varphi(s,x)-\varphi(s,y)}\Big)\cdot K(s,x,y).
\end{equation*} Then, we split the above integral into 9 components:
\begin{equation*}\begin{split}
&=-\frac{1}{2}\Big[
\int_{D^s_+} \int_{D^s_+}(*)dydx
+\int_{D^s_+} \int_{S^s}(*)dydx
+\int_{D^s_+} \int_{D^s_-}(*)dydx\\
&\quad\quad\quad+\int_{S^s} \int_{D^s_+}(*)dydx
+\int_{S^s} \int_{S^s}(*)dydx
+\int_{S^s} \int_{D^s_-}(*)dydx\\
&\quad\quad\quad+\int_{D^s_-} \int_{D^s_+}(*)dydx
+\int_{D^s_-} \int_{S^s}(*)dydx
+\int_{D^s_-} \int_{D^s_-}(*)dydx
\Big]\\
&=-\frac{1}{2}\Big[
(I)+(II)+\cdots+(IX)
\Big]\\
 \end{split}\end{equation*} 

We will prove the inequality: 
$\frac{1}{2}\Big[
(I)+(II)+\cdots+(IX)
\Big]\geq 
%{\Lambda^{-1}}\cdot
 C_2\cdot r^{-\alpha}$, which will imply the claim \eqref{claim_L_1} later. First,
we observe that $(I)=(V)=(IX)=0$ by the definition of $(*)$.\\

Second, we have $(II)=(IV)$ by symmetry of the kernel. Indeed,
\begin{equation*}\begin{split}  
&(II)=\int_{D^s_+} \int_{S^s}(*)dydx=
\int_{D^s_+} \int_{S^s}\varphi(s,x)\cdot K(s,x,y)
dydx\\
&= \int_{S^s}\int_{D^s_+}\varphi(s,x)\cdot K(s,x,y)
dxdy
= \int_{S^s}\int_{D^s_+}\varphi(s,y)\cdot K(s,y,x)
dydx\\
&= \int_{S^s}\int_{D^s_+}\varphi(s,y)\cdot K(s,x,y)
dydx=(IV).\\
 \end{split}\end{equation*}
Likewise, we have $(VI)=(VIII)$ and $(III)=(VII)$.
\begin{comment}
\begin{equation}\begin{split}  
&(VI)=\int_{S^s} \int_{D^s_-}(*)dydx=
\int_{S^s} \int_{D^s_-}-\varphi(s,y)\cdot K(s,x,y)
dydx\\
&= \int_{D^s_-}\int_{S^s} -\varphi(s,y)\cdot K(s,x,y)
dxdy
= \int_{D^s_-}\int_{S^s} -\varphi(s,x)\cdot K(s,y,x)
dydx\\
&= \int_{D^s_-}\int_{S^s} -\varphi(s,x)\cdot K(s,x,y)
dydx=(VIII)\\
 \end{split}\end{equation} 
\end{comment}
\begin{comment}
Also, we have
\begin{equation*}\begin{split}  
&(III)=\int_{D^s_+} \int_{D^s_-}(*)dydx=
\int_{D^s_+} \int_{D^s_-}2\Big(\varphi(s,x)-\varphi(s,y)\Big)\cdot K(s,x,y)
dydx\\
&=  \int_{D^s_-}\int_{D^s_+}2\Big(\varphi(s,x)-\varphi(s,y)\Big)\cdot K(s,x,y)
dxdy\\
&= \int_{D^s_-}\int_{D^s_+}2\Big(\varphi(s,y)-\varphi(s,x)\Big)\cdot K(s,y,x)
dydx\\
&=  \int_{D^s_-}\int_{D^s_+}2\Big(\varphi(s,y)-\varphi(s,x)\Big)\cdot K(s,x,y)
dydx\\
&=  \int_{D^s_-}\int_{D^s_+}-2\Big(\varphi(s,x)-\varphi(s,y)\Big)\cdot K(s,x,y)
dydx=(VII)\\
 \end{split}\end{equation*}
\end{comment}
Thus, we have 
\begin{equation*}\begin{split} 
&\Big[
(I)+(II)+\cdots+(IX)
\Big]=2\Big[(IV)+(VI)+(III)\Big]\\
&=2\Big[
\int_{S^s} \int_{D^s_+}(*)dydx
+\int_{S^s} \int_{D^s_-}(*)dydx
+\int_{D^s_+} \int_{D^s_-}(*)dydx
\Big]\\
&=2\Big[
\int_{S^s} \int_{D^s_+\cup D^s_-}(*)dydx
+\int_{D^s_+} \int_{D^s_-}(*)dydx
\Big]\\
&=2\Big[
\int_{S^s} \int_{\mathbb{R}^N}(*)dydx
+\int_{D^s_+} \int_{D^s_-}(*)dydx
\Big]=2[(D)+(B)]
 \end{split}\end{equation*} where the third equality follows
$\int_{S^s} \int_{S^s}(*)dydx=(V)=0$.\\

In order to use the lower bound of the kernel \eqref{cond_bounds},
we need to restrict the above integral on a subset of $\{|x-y|\leq\zeta\}$. 
For this purpose, we define the subsets
 $\tilde{D}^s_+,\tilde{D}^s_-$ and $\tilde{S}^s$ by
$\tilde{D}^s_{\pm}= D^s_{\pm}\cap B(ar)$ and $\tilde{S}^s=S_s\cap B(ar)$.
Then, if $x,y\in B(ar)$, then $|x-y|\leq 2ar\leq 2a\leq\zeta$
%from \eqref{def_zeta_0} and 
from \eqref{2a_leq_zeta}.
 Thus, from 
 the lower bound of the kernel \eqref{cond_bounds}, 
 we have
\begin{equation*}\begin{split} 
&(D)=
\int_{S^s} \int_{\mathbb{R}^N}(*)dydx\\
&=\int_{S^s} \int_{\mathbb{R}^N}
\Big[(0-0)
-\Big(1_{D^s_+}(y)-1_{D^s_-}(y)\Big)\Big]
\Big({-\varphi(s,y)}\Big)\cdot K(s,x,y)dydx\\
&=\int_{S^s} \int_{\mathbb{R}^N}|{\varphi(s,y)}|\cdot K(s,x,y)dydx\\
%\geq\int_{\tilde{S}^s} \int_{B(ar)}|{\varphi(s,y)}|\cdot K(s,x,y)dydx\\
&\geq
{\Lambda^{-1}}
\int_{\tilde{S}^s} \int_{B(ar)}\frac{|{\varphi(s,y)}|}{|x-y|^{N+\alpha}}
% k(s,x,y-x)
 dydx
\geq{\Lambda^{-1}}(2ar)^{-(N+\alpha)}
\int_{\tilde{S}^s} \|{{\varphi(s,\cdot)}}\|_{L^1({B(ar)})}
dx
\\&={\Lambda^{-1}}\cdot(2ar)^{-(N+\alpha)}\cdot
|{\tilde{S}^s}|\cdot \|{{\varphi(s,\cdot)}}\|_{L^1({B(ar)})}
\geq{\Lambda^{-1}}\cdot(2ar)^{-(N+\alpha)}\cdot
|{\tilde{S}^s}|\cdot\frac{8}{10}
 \end{split}\end{equation*} where, %for the first inequality,
%we used the fact $2ar\leq 2a\leq\zeta$ 
%from \eqref{def_zeta_0} and from \eqref{2a_leq_zeta}
%together with the lower bound of the kernel \eqref{cond_bounds}
%while, 
for the last equality,  the estimate 
\eqref{L_1_estimate_ball_ar} was used.\\
%Note that the fact $2ar\leq 2a\leq\zeta$
%was used to apply the lower bound of the kernel \eqref{cond_bounds}.\\

Also, we have
\begin{equation*}\begin{split} 
&(B)=\int_{D^s_+} \int_{D^s_-}(*)dydx\\
&=\int_{D^s_+} \int_{D^s_-}\Big[\Big(1_{D^s_+}(x)-0\Big)
-\Big(0-1_{D^s_-}(y)\Big)\Big]
\Big({\varphi(s,x)-\varphi(s,y)}\Big)\cdot K(s,x,y)dydx\\
&=2\int_{D^s_+} \int_{D^s_-}
\Big({\varphi(s,x)-\varphi(s,y)}\Big)\cdot K(s,x,y)dydx\\
&=2\Big[\int_{D^s_+} \int_{D^s_-}
\varphi(s,x)\cdot K(s,x,y)dydx
+\int_{D^s_+} \int_{D^s_-}
-\varphi(s,y)\cdot K(s,x,y)dydx\Big]\\
&=2[(B_1)+(B_2)].
 \end{split}\end{equation*}

%For $(B_1)$, we have 
We can obtain the following lower bound of $(B_1)$ by the following way:
\begin{equation*}\begin{split} 
&(B_1)=\int_{D^s_+} \int_{D^s_-}
\varphi(s,x)\cdot K(s,x,y)dydx=
\int_{D^s_-} \int_{D^s_+}
\varphi(s,x)\cdot K(s,x,y)dxdy\\
& %=\int_{D^s_-} \int_{D^s_+}
%\frac{\varphi(s,x)}{|x-y|^{N+\alpha}}\cdot k(s,x,y)dxdy
\geq {\Lambda^{-1}}
\int_{\tilde{D}^s_-} \int_{\tilde{D}^s_+}
\frac{\varphi(s,x)}{|x-y|^{N+\alpha}}\cdot
% k(s,x,y)
dxdy
\geq {\Lambda^{-1}} (2ar)^{-(N+\alpha)}
\int_{\tilde{D}^s_-} \|{{\varphi^+(s,\cdot)}}\|_{L^1({B(ar)})}dy\\
&\geq 
{\Lambda^{-1}} (2ar)^{-(N+\alpha)}\cdot
|{\tilde{D}^s_-}| \cdot \frac{3}{10}.
 \end{split}\end{equation*} Likewise, for $(B_2)$,
 we have $(B_2)\geq
{\Lambda^{-1}} \cdot(2ar)^{-(N+\alpha)}\cdot
|{\tilde{D}^s_+}| \cdot \frac{3}{10}$.\\
\begin{comment}
\begin{equation}\begin{split} 
&(B_2)=\int_{D^s_+} \int_{D^s_-}
-\varphi(s,y)\cdot K(s,x,y)dydx\\
&=\int_{D^s_+} \int_{D^s_-}
\frac{-\varphi(s,y)}{|x-y|^{N+\alpha}}\cdot k(s,x,y-x)dydx
\geq
\int_{\tilde{D}^s_+} \int_{\tilde{D}^s_-}
\frac{-\varphi(s,y)}{|x-y|^{N+\alpha}}\cdot k(s,x,y-x)dydx
\\
&\geq {\Lambda^{-1}} (2ar)^{-(N+\alpha)}
\int_{\tilde{D}^s_+} \|{{\varphi^-(s,\cdot)}}\|_{L^1({B(ar)})}dx
\geq 
{\Lambda^{-1}} (2ar)^{-(N+\alpha)}\cdot
|{\tilde{D}^s_+}| \cdot \frac{3}{10}.
 \end{split}\end{equation} 
\end{comment}

Now we have a desirable estimate for $\Big[
(I)+(II)+\cdots+(IX)
\Big]$:
\begin{equation*}\begin{split} 
&\frac{1}{2}\Big[
(I)+(II)+\cdots+(IX)
\Big]=\Big[(IV)+(VI)+(III)\Big]=[(D)+(B)]\\
&\geq \Big[
{\Lambda^{-1}}\cdot(2ar)^{-(N+\alpha)}\cdot
|{\tilde{S}^s}|\cdot\frac{8}{10}
\\
&\quad\quad\quad\quad\quad+2\Big(
{\Lambda^{-1}}\cdot (2ar)^{-(N+\alpha)}\cdot
|{\tilde{D}^s_-}| \cdot \frac{3}{10}
+{\Lambda^{-1}}\cdot (2ar)^{-(N+\alpha)}\cdot
|{\tilde{D}^s_+}| \cdot \frac{3}{10}
\Big)
\Big]\\
&\geq{\Lambda^{-1}}\cdot (2ar)^{-(N+\alpha)}\cdot\frac{6}{10}\cdot
\Big(
|{\tilde{S}^s}|+|{\tilde{D}^s_-}|+|{\tilde{D}^s_+}|
\Big)\\
&\geq{\Lambda^{-1}}\cdot (2ar)^{-(N+\alpha)}\cdot\frac{6}{10}\cdot
\Big(
(ar)^{N}\cdot V_N
\Big)=
%{\Lambda^{-1}}\cdot (2)^{-(N+\alpha)}(ar)^{-\alpha}\cdot\frac{6}{10}
%\cdot V_N\\
%&= 
%{\Lambda^{-1}}\cdot
 C_2\cdot r^{-\alpha}.
\\
 \end{split}\end{equation*}% where $C_2={\Lambda^{-1}}\cdot
%2^{-(N+\alpha)}\cdot a^{-\alpha}\cdot\frac{6}{10}
%\cdot V_N$. \\ 
%%=2^{-(N+\alpha)}\cdot (11)^{-\alpha/\gamma}\cdot\frac{6}{10}
%%\cdot V_N$.\\

It prove the claim \eqref{claim_L_1}, under the assumption
$\|\varphi(t)\|_{L^1(\mathbb{R}^N)}\geq 
\frac{9}{10}$,
 because
\begin{equation*}\begin{split} 
\|\varphi(t)\|_{L^1(\mathbb{R}^N)}&=
\|\varphi(0)\|_{L^1(\mathbb{R}^N)}+
\int_0^t\frac{d}{ds}\|\varphi(s,\cdot)\|_{L^1(\mathbb{R}^N)}ds\\
&\leq 1+ 
%{\Lambda^{-1}}\cdot
 C_2\cdot r^{-\alpha}\cdot t 
\quad\mbox{ for any } t\in[0,\delta_2r^\alpha].
\end{split}\end{equation*}

On the other hand, if $\|\varphi(t)\|_{L^1(\mathbb{R}^N)}\leq 
\frac{9}{10}$, then 
% can be achieved easily by taking 
% $\delta_{L^1}:=\min\{\delta_2,\frac{1}{10
% %{\Lambda^{-1}}
% \cdot C_2}\}$
%: indeed, 
%for $\|\varphi(t)\|_{L^1(\mathbb{R}^N)}\leq 
%\frac{9}{10}$, 
%, from the maximum principle, 
we have
\begin{equation*}
\|\varphi(t)\|_{L^1(\mathbb{R}^N)}
\leq\frac{9}{10}\leq
(1-
%{\Lambda^{-1}}\cdot
 C_2\cdot\frac{t}{r^{{\alpha}}})
\end{equation*} as long as
 $ %{\Lambda^{-1}}\cdot
  t\leq\frac{1}{10}C_2^{-1}r^{\alpha}$.
Therefore, by taking $\delta_{L^1}:=\min\{\delta_2,\frac{1}{10}C_2^{-1}\}$ and $C_{L^1}:=C_2$,
we finish the proof of Lemma \ref{lem_L_1}.
% %{\Lambda^{-1}}
% \cdot C_2}\}$
\\ % i.e. 
%$t\leq \frac{1}{10{\Lambda^{-1}}\cdot C_2}r^\alpha\leq\delta_{L^1}\cdot r^\alpha.$
% If  $\|\varphi(t)\|_{L^1(\mathbb{R}^N)}\geq 
%\frac{9}{10}$, then we use \eqref{claim_L_1}.\\

\end{proof}

%\subsection{Combining all conditions}
\textbf{STEP 5.} Combining all conditions.\\

Now we are ready to finish the proof of the main 
proposition \ref{main_prop}.
In {STEP 2-4}, we proved that
%, for any $s\in[0,\min\{\delta_3 r^{{\alpha}},T\}]$
%where $\delta_3:=\min\{\delta_1,\delta_2\}$,
\begin{equation}\label{step5_concent}
  \int_{\mathbb{R}^N} | \varphi(s,x)||x|^\gamma dx\leq 
  r^\gamma(1+
  C_{conc} %\Lambda
   A^{\frac{{{{\alpha}}}-\gamma}{N}}\frac{s}{r^{{{{\alpha}}}}})
\quad\mbox{ for } s\in[0,T),
%\begin{cases}& r^\gamma(1+
%C_{conc}\Lambda A^{\frac{{{{\alpha}}}-\gamma}{N}}\frac{s}{r^{{{{\alpha}}}}}),
%\quad\mbox{ if } \alpha<1\\
% & r^\gamma(1+
%C_{conc}(\Lambda+\tau) A^{\frac{{{{\alpha}}}-\gamma}{N}}\frac{s}{r^{{{{\alpha}}}}}),
%  \quad\mbox{ if } \alpha\geq 1.\\
%\end{cases}
\end{equation} 
\begin{equation}\label{step5_L_infty}
  \|\varphi(s,\cdot)\|_{L^\infty({\mathbb{R}^N})}
\leq \frac{A}{r^N}(1-
%{\Lambda^{-1}}\cdot
 C_{L^\infty}A^{\frac{{\alpha}}{N}}
\frac{s}{r^{{\alpha}}}) 
\quad\mbox{ for } s\in[0,\min\{\delta_{L^\infty}\cdot r^\alpha,T\}], \mbox{ and}
\end{equation}
\begin{equation}\label{step5_L_1}
  \|\varphi(s,\cdot)\|_{L^1({\mathbb{R}^N})}\leq (1-
  %{\Lambda^{-1}}\cdot
   C_{L^1}\cdot\frac{s}{r^{{\alpha}}}) 
\quad\mbox{ for } s\in[0,\min\{\delta_{L^1}\cdot r^\alpha,T\}].
\end{equation}

Note that the constants $C_{L^1},C_{L^\infty},$ and $C_{conc}$
are independent of $A$ as long as $A\geq1$ while 
$\delta_{L^1}$ and $\delta_{L^\infty}$ depend on $A$.
We define $\delta_3:=\min\{\delta_{L^1},\delta_{L^\infty}\}$
so that the above three estimates \eqref{step5_concent},
\eqref{step5_L_infty}, and \eqref{step5_L_1} hold at the same time for all 
$s\in[0,\min\{\delta_{3}r^\alpha,T\}]$.
Without loss of generality, we can assume
$C_{L^1}=C_{L^\infty}\leq C_{conc}$.\\
% Also we define $\tilde{\Lambda}:=\begin{cases}&\Lambda
%\mbox{ for }\alpha<1 \\ 
%&\Lambda+\tau \mbox{ for }\alpha\geq1. \end{cases}$.

Recall that we are looking for $\beta>0$ and  $z(r,s)$ such that
$\varphi(s)\in\Big(\frac{r}{z}\Big)^\beta\mathcal{U}_z$.
Thus, from Definition \ref{def_u_r} of $\mathcal{U}_r$ 
and from the above three estimates \eqref{step5_concent},
\eqref{step5_L_infty}, and \eqref{step5_L_1}, 
we need the followings:
\begin{equation}\begin{split}\label{goal_z_beta}
r^\gamma(1+
  C_{conc} %\Lambda
   A^{\frac{{{{\alpha}}}-\gamma}{N}}\frac{s}{r^{{{{\alpha}}}}})
\leq& \Big(\frac{r}{z}\Big)^\beta z^\gamma,\\
\frac{A}{r^N}(1-
%{\Lambda^{-1}}\cdot
 C_{L^\infty}A^{\frac{{\alpha}}{N}}
\frac{s}{r^{{\alpha}}}) \leq& \Big(\frac{r}{z}\Big)^\beta\cdot
\frac{A}{z^N}, \mbox{ and}\\
(1-
  %{\Lambda^{-1}}\cdot
   C_{L^1}\cdot\frac{s}{r^{{\alpha}}}) \leq& \Big(\frac{r}{z}\Big)^\beta.
\end{split}\end{equation} 
\begin{rem}\label{rem_competition}
\eqref{goal_z_beta} is equivalent to
\begin{equation}\begin{split}\label{concent_rough}
&r(1+
  C_{conc} %\Lambda
   A^{\frac{{{{\alpha}}}-\gamma}{N}}\frac{s}{r^{{{{\alpha}}}}})^{1/(\gamma-\beta)}
\leq z,
\end{split}\end{equation} 
\begin{equation}\begin{split}\label{concent_L_infty}
&z \leq r(1-
%{\Lambda^{-1}}\cdot
 C_{L^\infty}A^{\frac{{\alpha}}{N}}
\frac{s}{r^{{\alpha}}})^{-1/(N+\beta)}, \mbox{ and}
\end{split}\end{equation} 
\begin{equation}\begin{split}\label{concent_L_1}
&z \leq r(1-
  %{\Lambda^{-1}}\cdot
   C_{L^1}\cdot\frac{s}{r^{{\alpha}}})^{-1/\beta}.
\end{split}\end{equation} 
The power of $A$ in \eqref{concent_rough} 
is strictly less than that of $A$ in  \eqref{concent_L_infty}.
This fact is crucial because we can
choose $A$ large enough to hold \eqref{concent_rough} 
and \eqref{concent_L_infty} at the same time. Then we can
make $\beta$ small enough to hold \eqref{concent_L_1}, too.
We will now give all the details.\\
%From now on, we will make this argument more clear.\\ 
\end{rem}

We  take any $A\geq1$ large enough to satisfy the inequality:
\begin{equation*}
\frac{8}{\gamma}({N+(1/2)})\cdot
C_{conc}
%\tilde{\Lambda} 
A^{\frac{{{{\alpha}}}-\gamma}{N}} 
\leq 
%{\Lambda^{-1}}\cdot
 C_{L^\infty}A^{\frac{{\alpha}}{N}}.
\end{equation*}
%$\frac{8}{\gamma}({N+(1/2)})\cdot
%C_{conc}
%%\tilde{\Lambda} 
%A^{\frac{{{{\alpha}}}-\gamma}{N}} 
%\leq 
%%{\Lambda^{-1}}\cdot
% C_{L^\infty}A^{\frac{{\alpha}}{N}}$.
%i.e. we take $A=\Big(  
%2\cdot({N+(1/2)})\cdot\frac{2\cdot 2}{\gamma}\cdot
%\frac{C_{conc}}{C_{L^\infty}}
%\cdot({\tilde{\Lambda}}/{{\Lambda^{-1}}})\Big)^{\frac{N}{\gamma}}$
%.e. $A=C_{N,\alpha,\gamma}\cdot \Big(\frac{\tilde{\Lambda}}{{\Lambda^{-1}}}
%\Big)^{N/\gamma}$.
 In addition, we take any $\beta\in(0,\gamma/2]$ so small that
 the following inequality holds:
\begin{equation*}
 \frac{4}{\gamma}\beta\cdot
C_{conc} 
%\tilde{\Lambda}
 A^{\frac{{{{\alpha}}}-\gamma}{N}}
 \leq 
% {\Lambda^{-1}}\cdot
  C_{L^1}.
\end{equation*}
%$  \frac{4}{\gamma}\beta\cdot
%C_{conc} 
%%\tilde{\Lambda}
% A^{\frac{{{{\alpha}}}-\gamma}{N}}
% \leq 
%% {\Lambda^{-1}}\cdot
%  C_{L^1}$. 
Finally, we define a constant $L$ by 
\begin{equation*}
L:=\frac{2}{\gamma-\beta}\cdot
C_{conc}
A^{\frac{{{{\alpha}}}-\gamma}{N}}
\end{equation*}
%$L:=\frac{2}{\gamma-\beta}\cdot
%C_{conc}
%%\tilde{\Lambda} 
%A^{\frac{{{{\alpha}}}-\gamma}{N}}$ 
and a function $z(r,s)$ by 
\begin{equation*}
z(r,s):={r(1+L\frac{s}{r^\alpha})}.
\end{equation*}
%$z(r,s):={r(1+L\frac{s}{r^\alpha})}$.  
%i.e. we take $\beta=
%{{\Lambda^{-1}}}/{\tilde{\Lambda}})\cdot \frac{C_{L^1}}{C_{conc}} \Big(\frac{2\cdot %2}{\gamma}\cdot
% A^{\frac{{{{\alpha}}}-\gamma}{N}}\Big)^{-1}$ 
%i.e. $\beta=
%({{\Lambda^{-1}}}/{\tilde{\Lambda}})\cdot \frac{C_{L^1}}{C_{conc}}\cdot 
%\frac{\gamma}{2\cdot2} 
% \Big(  
%2\cdot({N+(1/2)})\cdot\frac{2\cdot 2}{\gamma}\cdot
%\frac{C_{conc}}{C_{L^\infty}}
%\cdot({\tilde{\Lambda}}/{{\Lambda^{-1}}})\Big)^{\frac{\gamma-\alpha}{\gamma}}$
%$=\Big(  
%2\cdot({N+(1/2)})
%\Big)^{\frac{\gamma-\alpha}{\gamma}}
%\cdot
%\Big[({{\Lambda^{-1}}}/{\tilde{\Lambda}})\cdot \frac{C_{L^1}}{C_{conc}}
%Big]^{\alpha/\gamma}
%\cdot 
%\Big[\frac{\gamma}{2\cdot2} \Big]^{\alpha/\gamma}=C_{N,\alpha,\gamma}\cdot
%\Big(\frac{\tilde{\Lambda}}{{\Lambda^{-1}}}
%\Big)^{-\alpha/\gamma}$ 

For the Concentration-condition, 
from
 (II) of Lemma \ref{lem_elemen}, we have
\begin{equation*}\begin{split}
 &\int_{\mathbb{R}^N} | \varphi(s,x)||x|^\gamma dx\leq  
 r^\gamma(1+
C_{conc}
%\tilde{\Lambda}
 A^{\frac{{{{\alpha}}}-\gamma}{N}}\frac{s}{r^{{{{\alpha}}}}})
= r^\gamma
\Big[1+\frac{\gamma-\beta}{2}\cdot L\cdot\frac{s}{r^\alpha}\Big]\\
&\leq r^\gamma
\Big[1+L\cdot\frac{s}{r^\alpha}\Big]^{\gamma-\beta}
%= \Big(\frac{1}{1+L\cdot\frac{s}{r^\alpha}}\Big)^\beta 
%\Big[{r(1+L\cdot\frac{s}{r^\alpha})}\Big]^\gamma
= \Big(\frac{r}{z}\Big)^\beta \cdot
z^\gamma
%&= \Big(\frac{r}{z(r,s)}\Big)^\beta z(r,s)^\gamma
\end{split}\end{equation*} where the last inequality
holds as long as 
%for $s\in[0,\min\{\delta_4 r^\alpha,T\}]$ where 
%$\delta_4:=\min\{\delta_3,(1/L)
%\}$ from
for $s\leq (1/L) r^\alpha $. %\in[0,\min\{(1/L) r^\alpha,T\}]$ %where 
%$\delta_4:=\min\{\delta_3,(1/L)
%\}$ 
We define $\delta_4:=\min\{\delta_3,(1/L)
\}$.\\

On the other hand, from $0<\beta\leq\gamma/2<1/2$, we observe the followings:
\begin{equation}\begin{split}\label{ineq_L_infty}
2\cdot({N+\beta})\cdot L
%&=2\cdot({N+\beta})\cdot\frac{2}{\gamma-\beta}\cdot C_{conc}
%\tilde{\Lambda}
 %A^{\frac{{{{\alpha}}}-\gamma}{N}} \\
&\leq\frac{8}{\gamma}({N+(1/2)})\cdot
C_{conc}
%\tilde{\Lambda}
 A^{\frac{{{{\alpha}}}-\gamma}{N}} 
\leq
%{\Lambda^{-1}}\cdot
 C_{L^\infty}A^{\frac{{\alpha}}{N}}\quad\mbox{and}
\end{split}\end{equation}
\begin{equation}\begin{split}\label{ineq_L_1}
 \beta\cdot L%=\beta\cdot \frac{2}{\gamma-\beta}\cdot C_{conc}
%%\tilde{\Lambda} 
%A^{\frac{{{{\alpha}}}-\gamma}{N}}
&\leq\frac{4}{\gamma} \beta\cdot 
C_{conc}
%\tilde{\Lambda}
 A^{\frac{{{{\alpha}}}-\gamma}{N}}
 \leq 
 %{\Lambda^{-1}}\cdot
  C_{L^1}.
\end{split}\end{equation}

For the $L^\infty$-condition,
from \eqref{ineq_L_infty} and from
(I) and (IV) of Lemma \ref{lem_elemen}, we have
\begin{equation*}\begin{split}
 & \|\varphi(s,\cdot)\|_{L^\infty({\mathbb{R}^N})}
\leq \frac{A}{r^N}(1-
%{\Lambda^{-1}}\cdot
 C_{L^\infty}A^{\frac{{\alpha}}{N}}
\frac{s}{r^{{\alpha}}})
\leq \frac{A}{r^N}
\cdot  \Big[{1-2\cdot({N+\beta})\cdot L\cdot\frac{s}{r^\alpha}}\Big]\\
&\leq \frac{A}{r^N}
\cdot  \Big[{1+2\cdot({N+\beta})\cdot L\cdot\frac{s}{r^\alpha}}\Big]^{-1}
\leq \frac{A}{r^N}
\cdot  \Big[{1+L\cdot\frac{s}{r^\alpha}}\Big]^{-(N+\beta)}%\\
%&= \Big(\frac{1}{1+L\cdot\frac{s}{r^\alpha}}\Big)^\beta 
%\cdot A\cdot \Big[{r(1+L\cdot\frac{s}{r^\alpha})}\Big]^{-N}\\
%&
= \Big(\frac{r}{z}\Big)^\beta\cdot \frac{A}{z^N}
\end{split}\end{equation*} 
for $s\in[0,\min\{\delta_5 r^\alpha,T\}]$ where $\delta_5:=\min\{\delta_4,(1/L)
\cdot C
%_{(N+\beta)}
\}$.\\
%so that $L\cdot\frac{s}{r^\alpha}\leq C_{(\gamma-\beta)}$
%We used $(1-x)\leq\frac{1}{1+x}$ for $x>0$\\
%we used 
%$(1+2{\eta}x)\geq (1+x)^{\eta}$ for $0<x<C_\eta(=\min\{\frac{2}{\eta-1},
%\frac{2}{\eta-1}\cdot 3^{2-\eta}\})$ and $\eta>1$

%\ \\
%(*)Need that $2\cdot({N+\beta})\cdot L\cdot\frac{s}{r^\alpha} 
%\leq{\Lambda^{-1}}\cdot C_{L^\infty}A^{\frac{{\alpha}}{N}}$
%\ \\
For the $L^1$-condition, from \eqref{ineq_L_1} and
(I) and (III) of Lemma \ref{lem_elemen}, we have
\begin{equation*}\begin{split}
  \|\varphi(s,\cdot)\|_{L^1({\mathbb{R}^N})}
\leq &(1-
%{\Lambda^{-1}}\cdot
 C_{L^1}
\frac{s}{r^{{\alpha}}})
\leq 
1-\beta\cdot L\cdot\frac{s}{r^\alpha}%\\
%&\leq \frac{1}{1+\beta\cdot L\cdot\frac{s}{r^\alpha}} 
%\leq \Big(\frac{1}{1+L\cdot\frac{s}{r^\alpha}}\Big)^\beta \\
%&
= \Big(\frac{r}{z}\Big)^\beta 
\end{split}\end{equation*} for $s\in[0,\min\{\delta_5 r^\alpha,T\}]$.\\

%We used $(1-x)\leq\frac{1}{1+x}$ for $x>0$\\
%$(1+{\eta}x)\geq (1+x)^{\eta}$ for $x>0$ and $0<\eta<1$
%\ \\
%
%(**)need that $ \beta\cdot L
% \leq {\Lambda^{-1}}\cdot C_{L^1}$ 
%
%\ \\
Together with the mean zero property of $\varphi$ in {STEP 1}, we proved
for any $s\in[0,\min\{\delta_5 r^\alpha,T\}]$ with $r\in(0,1]$ and for any
$\varphi_0\in\mathcal{U}_r$, we have the evolution estimate
\begin{equation*}
 \varphi(s) \in 
%\Big(\frac{r}{z(r,s)}\Big)^\beta
%\mathcal{U}_{z(r,s)}=
\Big(\frac{r}{r(1+L\frac{s}{r^\alpha})}\Big)^\beta
\mathcal{U}_{r(1+L\frac{s}{r^\alpha})}=
\Big(\frac{r}{z}\Big)^\beta
\mathcal{U}_{z}.
\end{equation*} which proves \eqref{eq_main_prop}. \\ %as long as $0<r\leq1$.\\

It remains to prove \eqref{eq_main_prop_r_1}.
Let $r=1$ (i.e. $\varphi_0\in\mathcal{U}_1$).
 Note that
Lemma \ref{lem_concent} holds for all time $s\in[0,T)$ %without any smallness assumption on length of a time interval
and $L^p$ norm is decreasing all time  $s\in[0,T)$  and for any $1\leq p\leq\infty$
 from (II) of Lemma \ref{lem_property_equation_with_kernel}. 
Thus we have $\varphi(s) \in (1+Ls)\mathcal{U}_1$ for all
$s\in[0,T)$. This
 is the end of the  proof of Proposition \ref{main_prop}.

\end{proof}

\section{Proof of the part $(II)$ of Theorem \ref{main_thm}}\label{sec_Proof_of_main_thm}
\begin{proof}[Proof of the part $(II)$ of Theorem 
\ref{main_thm}]
%\begin{proof}[Proof of theorem 
%\ref{main_thm} assuming proposition \ref{main_prop}]
Let $t$ be any time between $0$ and $T$.
Thanks to (II) of Lemma \ref{lem_c_beta}
 and 
 (II) of Lemma \ref{lem_property_equation_with_kernel},
%non-increasing of $L^\infty$-norm of ${w}(s)$, 
 the only thing we need to do is to find an estimate on
$r^{-\beta}\Big|\int_{\mathbb{R}^3}{w}(t,x)\varphi_0(x)dx\Big|$
for  $\varphi_0\in
\mathcal{S}\cap\mathcal{U}_r$ with $0<r\leq 1$.
From Proposition \ref{main_prop}, we have a smooth solution  $\varphi$  on $[0,t]$ correspoding to 
the initial data $\varphi_0$ with the kernel ${{K}^{(t)}}$ 
, which is defined by ${{K}^{(t)}}(s):=K(t-s)$
(see the definition \eqref{def_back_kernel}).
From Lemma \ref{lem_dual}, we want a control on 
$r^{-\beta}\Big|\int_{\mathbb{R}^3}{w}_0(x)\varphi(t,x)dx\Big|$.
  Indeed,
\begin{equation}\begin{split}\label{dual_conclu}
\|{w}(t,\cdot)\|_{C^\beta(\mathbb{R}^N)}&\leq C\cdot \Big(\|{w}(t,\cdot)\|_{L^\infty} + 
\sup_{\varphi_0\in
\mathcal{S}\cap
\mathcal{U}_r,0<r\leq 1}
r^{-\beta}\Big|\int_{\mathbb{R}^3}{w}(t,x)\varphi_0(x)dx\Big| \Big)\\
&\leq C\cdot \Big(\|{w}_0\|_{L^\infty} + 
\sup_{\varphi_0\in
\mathcal{S}\cap
\mathcal{U}_r,0<r\leq 1}
r^{-\beta}\Big|\int_{\mathbb{R}^3}{w}_0(x)\varphi(t,x)dx\Big| \Big).
\end{split}\end{equation}

\begin{rem}\label{rem_repeat}
 The main idea is to repeat \eqref{eq_main_prop} as 
many time as we want until the time evolution reaches the given time $t\in(0,T)$. For example,
as long as $s_1\leq\delta r^\alpha$, $s_2\leq
\delta z(r,s_1)^\alpha $, and $z(r,s_1)\leq 1$, we 
can repeat Proposition \ref{main_prop} twice to 
get
the following time evolution:
\begin{equation*}\begin{split}
 \varphi(0)\in\mathcal{U}_r &\Rightarrow 
\varphi(s_1)\in\Big(\frac{r}{z(r,s_1)}\Big)^\beta\mathcal{U}_{z(r,s_1)}
\end{split}\end{equation*}
\begin{equation*}\begin{split}
\Rightarrow 
\varphi(s_1+s_2)&\in\Big(\frac{r}{z(r,s_1)}\Big)^\beta\times
\Big(\frac{z(r,s_1)}{z(z(r,s_1),s_2)}\Big)^\beta\mathcal{U}_{z(z(r,s_1),s_2)}\\
&=
\Big(\frac{r}{z(z(r,s_1),s_2)}\Big)^\beta\mathcal{U}_{z(z(r,s_1),s_2)}.
%=\Big(\frac{r}{\overline{z}}\Big)^\beta\mathcal{U}_{\overline{z}}
\end{split}\end{equation*}
However, % this process works  only when %$r\in(0,1]$. Thus,
when $z(
r,s)$ reaches $1$ before the given time $t$, then we cannot use \eqref{eq_main_prop} any more.
Instead, we need to use \eqref{eq_main_prop_r_1}, which
grows as time increases. For this reason, 
we  obtain only \eqref{from c beta to c beta} first
which depends on $t$. 
This defect is overcome
by investigating the evolution of % only depending on 
the $L^1$ norm of $\varphi(s)$
(see \eqref{from L infty to c beta_before}). 
Since this examination requires 
a careful estimate \eqref{careful_repetition} for repetitions of \eqref{eq_main_prop},
we present a rigours argument  below.
As a result,
the final estimate is independent of the length of time interval
(see \eqref{from c beta to c beta_final}).\\

\end{rem}

%**need existence of a regular solution $\varphi(t)$ on $[0,T]$\\

%\ \\

Define a constant $\eta:=(1+L\cdot\delta)>1$. For each $r\in(0,1]$,
we define the integer $k=k(r)\geq1$ such that 
$r\cdot\eta^{k-1}\leq 1<r\cdot\eta^{k}$.
%which is always possible from the fact $\eta>1$.
Also define $z_n=z_n(r)$ for $n=0,1,2,\cdots,k-1,k$ by
\begin{equation*}z_n=\begin{cases}
&  r\cdot\eta^n
\quad\mbox{ if } 0\leq n\leq (k-1),\\
& 1
\quad\mbox{ if }  n=k.\\
\end{cases}\end{equation*} Note that $r=z_0< z_1 < \cdots< z_{k-1}\leq 1=z_k$.

We find $\tilde{t}=\tilde{t}(r)\in
[0,\delta r^\alpha(\eta^\alpha)^{k-1})
=[0,\delta (z_{k-1})^\alpha)$
such that  ${z_{k-1}(1+L\frac{\tilde{t}}{(z_{k-1})^\alpha})}$ $=1$,
which is always possible because 
$ z_{k-1}\leq 1 <{z_{k-1}\cdot(1+
L\frac{\delta (z_{k-1})^\alpha}{(z_{k-1})^\alpha})}=z_{k-1}\cdot\eta$.

Also define $t_n=t_n(r)$ for $n=0,1,2,\cdots,k-1,k,k+1$ by
\begin{equation*}t_n=\begin{cases}
& \delta\cdot r^\alpha\Big(\frac{(\eta^\alpha)^n-1}{\eta^\alpha-1}\Big) 
%\delta r^\alpha\Big(1+\eta^\alpha+(\eta^\alpha)^2+(\eta^\alpha)^3+\cdots+(\eta^\alpha)^{n-1}\Big)
\quad\mbox{ if } 0\leq n\leq (k-1),\\
&( t_{k-1}+\tilde{t})
\quad\mbox{ if }  n=k,\\% \mbox{ and}\\
&\infty \quad\mbox{ if } n=k+1.
\end{cases}\end{equation*} Note that, for $1\leq n\leq(k-1)$,
\begin{equation*}\begin{split}
t_n&=\delta r^\alpha\Big(1+\eta^\alpha+(\eta^\alpha)^2+\cdots+(\eta^\alpha)^{n-1}\Big)\\
&=\delta r^\alpha+\delta r^\alpha\eta^\alpha+\delta r^\alpha(\eta^\alpha)^2+\cdots+\delta r^\alpha(\eta^\alpha)^{n-1}\\
&=\delta(z_0)^\alpha+\delta(z_1)^\alpha+\cdots+\delta(z_{n-1})^\alpha \mbox{ and }\\
\end{split}\end{equation*} 
\begin{equation*}\begin{split}
t_n-t_{n-1}&=\delta(z_{n-1})^\alpha.\\
\end{split}\end{equation*} 
Now we make a partition of $[0,\infty)\subset \mathbb{R}^1$ by
\begin{equation*}
[0,\infty)=[t_0,t_1)\cup[t_1,t_2)\cup\cdots\cup[t_{k-2},t_{k-1})\cup[t_{k-1},t_{k})\cup[t_k,t_{k+1})
\end{equation*} where these union are disjoint.

 Finally, we are ready to  apply the main proposition \ref{main_prop}
as many time as we want. 
\begin{comment}
For example, if $t\in[t_0,t_1)$,
\begin{equation*}\begin{split}
\varphi(t)&\in \Big(\frac{r}{z_0(1+L\cdot\frac{t-t_0}{(z_0)^\alpha})}\Big)^\beta
\mathcal{U}_{z_0(1+L\cdot\frac{t-t_0}{(z_0)^\alpha})}.\\
%&\subset\Big(\frac{r}{z_0(1+L\cdot\frac{t-t_0}{(z_0)^\alpha})}\Big)^\beta
%\cdot(1+L\cdot t)\cdot
%\mathcal{U}_{z_0(1+L\cdot\frac{t-t_0}{(z_0)^\alpha})}
%&= \Big(\frac{r}{z_0(1+L\cdot\frac{t-t_0}{(z_0)^\alpha})}\Big)^\beta
%\mathcal{U}_{z_0(1+L\cdot\frac{t-t_0}{(z_0)^\alpha})}\\
\end{split}\end{equation*} 
\end{comment}
Indeed, if $t\in[t_n,t_{n+1})$ with $0\leq n\leq(k-1)$,
then
we can repeat the main proposition \ref{main_prop} so that we obtain
%$\varphi(t)\in $
\begin{equation*}\begin{split}
\varphi(t)\in 
&\Big(\frac{r}{z_1}\Big)^\beta\times
\Big(\frac{z_1}{z_2}\Big)^\beta\times
\cdots \times
\Big(\frac{z_n}{z_n(1+L\cdot\frac{t-t_n}{(z_n)^\alpha})}\Big)^\beta
\mathcal{U}_{z_n(1+L\cdot\frac{t-t_n}{(z_n)^\alpha})}\\
&=
\Big(\frac{r}{z_n(1+L\cdot\frac{t-t_n}{(z_n)^\alpha})}\Big)^\beta
\mathcal{U}_{z_n(1+L\cdot\frac{t-t_n}{(z_n)^\alpha})}.\\
%&\subset
%\Big(\frac{r}{z_n(1+L\cdot\frac{t-t_n}{(z_n)^\alpha})}\Big)^\beta
%\cdot(1+L\cdot t)\cdot
%\mathcal{U}_{z_n(1+L\cdot\frac{t-t_n}{(z_n)^\alpha})}\\
\end{split}\end{equation*} 

Moreover, because 
\begin{equation}\begin{split}
\label{varphi_t_k_estimate}
\varphi(t_k)\in 
&\Big(\frac{r}{z_k}\Big)^\beta
\mathcal{U}_{z_k}=\Big(\frac{r}{1}\Big)^\beta
\mathcal{U}_{1},
\end{split}\end{equation} 
we get, for the case 
$t\in[t_k,t_{k+1})=[t_k,\infty)$,
\begin{equation*}\begin{split}
\varphi(t)\in 
&\Big(\frac{r}{1}\Big)^\beta\cdot(1+L(t-t_k))\cdot
\mathcal{U}_{1}.
%\subset \Big(\frac{r}{1}\Big)^\beta\cdot(1+L\cdot t)\cdot
%\mathcal{U}_{1}
\end{split}\end{equation*} 
From the above argument,
for any fixed $r\in(0,1]$, we can extend the function $z=z(r,s)$ of Proposition \ref{main_prop}
%beyond 
%$s\leq\delta r^\alpha$ 
up to 
%any given time
all $s\in[0,\infty)$
 %on $r\in(0,1]$ and $s\in[0,t]$ 
 by
\begin{equation*}
z(r,s)=
\begin{cases}
&{z_n(1+L\cdot\frac{s-t_n}{(z_n)^\alpha})} \quad\mbox{ for }
s\in[t_n,t_{n+1}) \mbox{ with  } 0\leq n\leq(k-1),\\
&1 \quad\mbox{ for }
s\in[t_k,t_{k+1})=[t_k,\infty).
\end{cases}\end{equation*}
In terms of the function $z$, 
%for $r\in(0,1]$ and $t\in[0,T]$,
%there exists $z=z(r,t)>0$ and $t_k>0$
%such that
we obtained %, for $s\in[0,T)$,
\begin{equation*}\begin{split}
\varphi(t)\in 
& \Big(\frac{r}{z}\Big)^\beta\cdot(1+L\cdot (t-t_k)^+)\cdot
\mathcal{U}_{z}.
\end{split}\end{equation*} %where $t_k$ depends only on $r$. 
As a result, we have
\begin{equation*}\begin{split}
\Big|\int_{\mathbb{R}^3}{w}_0(x)\varphi(t,x)dx\Big| 
&\leq \Big(\frac{r}{z}\Big)^\beta \cdot(1+L\cdot t)\cdot z^\beta\cdot 
\|{w}_0\|_{C^\beta(\mathcal{R}^N)}\\
&=r^\beta \cdot (1+L\cdot t)\cdot 
\|{w}_0\|_{C^\beta(\mathcal{R}^N)}.\\
\end{split}\end{equation*} From the observation \eqref{dual_conclu}, we have proved, for any $t\in[0,T)$,
\begin{equation}\label{from c beta to c beta}
      \|{w}(t,\cdot)\|_{C^\beta(\mathbb{R}^N)}
\leq C 
\cdot (1+L\cdot t)
\cdot\|{w}_0\|_{C^\beta(\mathbb{R}^N)}
     \end{equation} where $C$ does not depend on $t$.
     Note that this estimate blows up
     as $t$ goes to infinity.\\

We can overcome the above blow-up defect by obtaining the evolution of $L^1$-norm of %${L^1(\mathbb{R}^N)}$ estimate of
 $\varphi(s)$. Indeed,
for the case $t<t_k$, i.e. for
$t\in[t_n,t_{n+1})$  with $0\leq n\leq(k-1)$,
 the function $z(r,t)$ is
 bounded below by $C\cdot t^{1/\alpha}$
 where $C$ does not depend on $r\in(0,1]$. Indeed,
\begin{equation}\begin{split}
\label{careful_repetition}
\Big(z(r,t)\Big)^\alpha&=
\Big({z_n(1+L\cdot\frac{t-t_n}{(z_n)^\alpha})}\Big)^\alpha\geq
(z_n)^\alpha=(r\cdot\eta^n)^\alpha=r^\alpha\cdot(\eta^\alpha)^n\\
&\geq
\Big(\frac{\eta^\alpha-1}{\eta\delta}\Big)\delta r^\alpha\cdot
\frac{((\eta^\alpha)^{n+1}-1)}{(\eta^\alpha-1)}
\geq
\Big(\frac{\eta^\alpha-1}{\eta\delta}\Big)t_{n+1}
\geq
\Big(\frac{\eta^\alpha-1}{\eta\delta}\Big)t.\\
\end{split}\end{equation}
Recall that  
$
\varphi(t)\in 
 ({r}/{z})^\beta\cdot
\mathcal{U}_{z}
$ for any $t<t_k$. Thus, thanks to
\eqref{careful_repetition}, we have the evolution of $L^1$-norm 
$\|\varphi(t)\|_{L^1(\mathbb{R}^N)}\leq ({r}/{z})^\beta\leq
C\cdot{r}^\beta\cdot {t^{-\beta/\alpha}} $.\\
On the other hand, from \eqref{varphi_t_k_estimate}, 
we have $\|\varphi(t_k)\|_{L^1(\mathbb{R}^N)}\leq r^\beta $. Thus,
from (II) of Lemma \ref{lem_property_equation_with_kernel}, we get
$\|\varphi(t)\|_{L^1(\mathbb{R}^N)}\leq r^\beta $ as long as $t\geq t_k$.
%If we focus not on ${w}_0\in C^\beta$ but on $L^\infty$,
Therefore, we have a control for any $t\in(0,T)$:
\begin{equation}\begin{split}\label{from L infty to c beta_before}
\Big|\int_{\mathbb{R}^3}{w}_0(x)\varphi(t,x)dx\Big| 
&\leq 
\|{w}_0\|_{L^{\infty}(\mathcal{R}^N)}\cdot
\|\varphi(t)\|_{L^1(\mathbb{R}^N)}\\
&\leq
C \cdot r^\beta\cdot{\max\{1,\frac{1}{t^{\beta/\alpha}}\}}\cdot
\|{w}_0\|_{L^{\infty}(\mathcal{R}^N)}.
\end{split}\end{equation} Thus, from
\eqref{dual_conclu}, we have
\begin{equation}\label{from L infty to c beta}
      \|{w}(t,\cdot)\|_{C^\beta(\mathbb{R}^N)}
\leq C \cdot{\max\{1,\frac{1}{t^{\beta/\alpha}}\}}\cdot\|{w}_0\|_{L^\infty(\mathbb{R}^N)} 
     \end{equation} where $C$ does not depend on $t$. 
 Now we can combine
   \eqref{from c beta to c beta}
with \eqref{from L infty to c beta}
to get
\begin{equation}\begin{split}\label{from c beta to c beta_final}
      \|{w}(t,\cdot)\|_{C^\beta(\mathbb{R}^N)}
& \leq C \cdot\min\{{\max\{1,\frac{1}{t^{\beta/\alpha}}\}}, 
(1+L\cdot t)
\}
\cdot\|{w}_0\|_{C^\beta(\mathbb{R}^N)}\\
&\leq C 
\cdot\|{w}_0\|_{C^\beta(\mathbb{R}^N)} 
\mbox{ for any } t\in[0,T).
   \end{split}  \end{equation}

Similarly, we can prove
$\|\varphi(t)\|_{L^\infty(\mathbb{R}^N)}
%\leq ({r}/{z})^\beta
\leq
C \cdot r^\beta\cdot{\max\{1,\frac{1}{t^{(N+\beta)/\alpha}}\}}$. As a result,
from
\eqref{dual_conclu},
we have 
\begin{equation}\label{from L 1 to c beta}
      \|{w}(t,\cdot)\|_{C^\beta(\mathbb{R}^N)}
\leq C \Big(
\|{w}_0\|_{L^\infty(\mathbb{R}^N)}+
{\max\{1,\frac{1}{t^{(N+\beta)/\alpha}}\}}\cdot\|{w}_0\|_{L^1(\mathbb{R}^N)}\Big).
     \end{equation}
\end{proof}

\section{Appendix}\label{sec_Appendix}

\subsection{Proof of the part $(I)$ of
Theorem \ref{main_thm}}
\begin{proof}[Proof of the part $(I)$ of Theorem 
\ref{main_thm}]
In this subsection, we suppose that the kernel 
$K$  satisfies not
the $weak$-$(*)$-kernel condition
in Definition \ref{def_parameter2}
but the $(*)$-kernel condition
in Definition \ref{def_parameter} (we recall
that the latter condition implies
the former one). Note that
the kernel $K$ does not need
to satisfy \eqref {smooth_assump_lin}
any more.
Thus, we first    construct a family of kernels
$K_\epsilon$
keeping all the parameters 
of the $(*)$-kernel condition uniformly in $\epsilon>0$,
and satisfying
\eqref{smooth_assump_lin}. Then
we   use
the conclusion of the part $(II)$ of Theorem 
\ref{main_thm}.\\

We define $\Phi$ by $\Phi(t,x,y):=\Phi^1(t)\Phi^2(x)\Phi^2(y)$
for $t\in\mathbb{R}$ and $x,y\in\mathbb{R}^N$ 
and $\Phi_\epsilon(\cdot):=\epsilon^{-(2N+1)}\Phi(\cdot/\epsilon)$ 
where
$\Phi^1$ and $\Phi^2$ are standard $C_c^\infty$ mollifiers in 
$\mathbb{R}$ and $\mathbb{R}^N$, respectively. % (i.e. $\int\Phi^i=1$).
Let $(w_0)_\epsilon:=w_0*\Phi^2$ and $h(t,x,y):=k(t,x,y-x)$.
Then we define a family of kernels  by $h_\epsilon:=h^{\epsilon}*_{t,x,y}\Phi_\epsilon$
where \begin{equation*}
{
h^\epsilon(t,x,y):=\begin{cases} & h(t,x,y)\quad\mbox{ for } 
{
%|x|+
|x-y|<(1/\epsilon)} \mbox{ with }t\in[0,\min\{(1/\epsilon),T\}],\\
&
{\Lambda^{-1}} 
%(k*_z\Phi^2_\epsilon)(t,x,\frac{1}{\epsilon}\frac{z}{|z|})
\quad \quad\quad\mbox{ otherwise}.\end{cases}
}
\end{equation*}
%$\Phi^2_\epsilon(\cdot):=\epsilon^{-N}\Phi^2(\cdot/\epsilon)$
%and 
%for $z\in\mathbb{R}^{2N+1}$.
Since $|h^\epsilon(t,x,y)|\leq \Lambda(1+\epsilon^{-\omega})<\infty$
for all $t,x$ and $y$, %the same bound holds for $k_\epsilon$. As a result,
we  observe that $k_\epsilon(t,x,z):=h_\epsilon(t,x,x+z)$ satisfies the condition \eqref{smooth_assump_lin}.

\begin{comment}
We define $\Phi$ by $\Phi(t,x,z):=\Phi^1(t)\Phi^2(x)\Phi^2(z)$
for $t\in\mathbb{R}$ and $x,z\in\mathbb{R}^N$ where
$\Phi^1$ and $\Phi^2$ are standard $C_c^\infty$ mollifiers in 
$\mathbb{R}$ and $\mathbb{R}^N$, respectively.
Then we define a family of kernels  by $k_\epsilon:=k^{\epsilon}*_{t,x,z}\Phi_\epsilon$
where \begin{equation*}
{
k^\epsilon(t,x,z):=\begin{cases} & k(t,x,z)\quad\mbox{ if } 
{
%|x|+
|z|<(1/\epsilon)} \mbox{ and }t\in[0,\min\{(1/\epsilon),T\}]\\
&
%{\Lambda^{-1}} 
(k*_z\Phi^2_\epsilon)(t,x,\frac{1}{\epsilon}\frac{z}{|z|})
\quad \quad\quad\mbox{ otherwise}\end{cases},
}
\end{equation*}
$\Phi^2_\epsilon(\cdot):=\epsilon^{-N}\Phi^2(\cdot/\epsilon)$
and $\Phi_\epsilon(\cdot):=\epsilon^{-(2N+1)}\Phi(\cdot/\epsilon)$. 
%for $z\in\mathbb{R}^{2N+1}$.
Since $|k^\epsilon(t,x,z)|\leq \Lambda(1+\epsilon^{-\omega})$
for all $z$, %the same bound holds for $k_\epsilon$. As a result,
we  observe that $k_\epsilon$ satisfies the condition \eqref{smooth_assump_lin}.
\end{comment}

For each % $\epsilon\in(0,\zeta/2)$,
$\epsilon\ll\zeta/2$,
%(for the case $\alpha\geq1$, $\epsilon\in(0,\min\{\zeta/2,s_0/2\})$),
the associated kernel $K_\epsilon(t,x,y)
:=k_\epsilon(t,x,y-x)\cdot|y-x|^{-(N+\alpha)}$ 
satisfies the $(*)$-kernel condition on the same parameter set  of the original kernel $K$
except $\zeta_\epsilon:=\zeta/2$ and
$\Lambda_\epsilon:=2\Lambda$ (for $\alpha\geq1$,
we assume further
$\epsilon\ll s_0/2$ and $(s_0)_\epsilon:=s_0/2$).
%(for the case $\alpha\geq1$, we also assume $(s_0)_\epsilon:=s_0/2$).
% (when $\alpha\geq1$, to verify cancellation condition \eqref{cond_cancell}is not trivial). 
Then, we can construct a weak solution $w_\epsilon$ corresponding
%these bounded smooth  function $k_\epsilon$, 
the kernel $K_\epsilon$ and the initial data $(w_0)_\epsilon$, 
and this solution $w_\epsilon$ is smooth
 since $k_\epsilon$
satisfy \eqref{smooth_assump_lin}
(for existence, see  \cite{komatsu_fundamental} or refer the approximation scheme
 in \cite{ccv} while  smoothness is a consequence of 
a standard energy argument). 
Thanks to the part $(II)$ of Theorem \ref{main_thm},
these solutions satisfy 
\eqref{eq_C_beta_main_thm},
 \eqref{eq_L_infty_main_thm}, and 
 \eqref{eq_L_1_main_thm}. As a result,
 we can extract a limit function $w$, which is a
 weak solution for the original kernel $K$ and the initial 
 data $w_0$.
\end{proof}
\subsection{Proof of Theorem 
\ref{main_thm_nonlinear}}\label{prof_non_linear}
\begin{proof}[Proof of Theorem \ref{main_thm_nonlinear}]
% assuming Theorem \ref{main_thm}]

For convenience, we define a function $g$ 
by $g(x)=G(x)\cdot|x|^{N+{\alpha}}$.
In addition to all the assumptions 
of Theorem \ref{main_thm_nonlinear}, we assume further  
%$\theta_0\in C^\infty(\overline{\mathbb{R}^N})$ and 
%In addition, we regularize 
%$g(\cdot)$ and $\phi(\cdot)$ first. %so that  $g(\cdot)$ and $\phi(\cdot)$ are smooth:
%% in the sense that 
\begin{equation}\label{smooth_assump_nonlin}
\theta_0\in C^\infty(\overline{\mathbb{R}^N}),\quad g\in C^\infty(\overline{\mathbb{R}^N}), \mbox{ and } \phi\in C^\infty(\overline{\mathbb{R}}).
\end{equation} Then there exists a weak solution $\theta$ of \eqref{main_non_lin_eq} in global time
and it is smooth. Indeed,
for existence issue, we refer to
Benilan  and Brezis \cite{MR0336471}
or the appendix in the paper \cite{ccv}.
Smoothness follows a
difference quotient argument.\\
%(refer to \cite{komatsu_continuity},\cite{komatsu_fundamental} or \cite{ccv}).\\

We will show that the conclusions of Theorem \ref{main_thm_nonlinear} hold for this smooth solution $\theta$.
Moreover, it will be clear that the constants $C$ and $\beta$ depend only on
the parameters in the hypotheses of Theorem \ref{main_thm_nonlinear} and they
are independent of the actual norms coming from the above additional assumption \eqref{smooth_assump_nonlin}.
Thus the conclusions  of Theorem \ref{main_thm_nonlinear}  without  \eqref{smooth_assump_nonlin}
follows
by a limit argument.\\

\begin{rem}
Indeed, if we do not have \eqref{smooth_assump_nonlin}, then we  regularize
$\theta_0,g, $ and $ \phi$ first: 
\begin{equation*}
(\theta_0)_\epsilon:=\theta_0*\Phi^2_\epsilon,\quad
 g_\epsilon:=g^\epsilon*\Phi^2_\epsilon,\quad \mbox{ and }
  \phi_\epsilon:=\phi*\Phi^1_\epsilon
\end{equation*} where $\Phi^1$ and $\Phi^2$ are mollifiers in $\mathbb{R}^1$ and $\mathbb{R}^N$, respectively,
and $g^\epsilon$ is defined by $g^\epsilon(x):=\begin{cases}&g(x)\mbox{ if }
|x|\leq(1/\epsilon),\\&0 \mbox{ otherwies}.\end{cases}$ As a result, we obtain 
\eqref{smooth_assump_nonlin} for $(\theta_0)_\epsilon, g_\epsilon, $ and $ \phi_\epsilon$.
Moreover, for any $\epsilon\leq (\zeta/2)$, all the assumptions (the parameters) 
of Theorem \ref{main_thm_nonlinear}
still work for for $(\theta_0)_\epsilon, g_\epsilon, $ and $ \phi_\epsilon$
%do not change 
except we need to replace
the original $\zeta$ by $\zeta/2$ for the condition \eqref{cond_bounds_kernel_nonlinear}.\\
\end{rem}

We take a derivative $(D_e\theta:=w)$ on the equation \eqref{main_non_lin_eq}
so that we get the following equation  
\begin{equation*}
 \partial_t w(t,x)-
\int_{\mathbb{R}^N}
(w(t,y)-w(t,x))
\phi^{\prime\prime}(\theta(t,y)-\theta(t,x))G(y-x)dy=0.
\end{equation*} By putting 
$K(t,x,y):=\phi^{\prime\prime}(\theta(t,y)-\theta(t,x))G(y-x)$,
this function $w(=D_e\theta)$ solves the linear equation \eqref{main_lin_eq}.
Moreover,
it is easy to see that this new kernel $K$ satisfies
\eqref{cond_symm},  
\eqref{cond_bounds}, 
and \eqref{smooth_assump_lin}  directly
(a rigorous proof can be completed by using the difference quotient argument, which 
%for the case $0<\alpha\leq1$ 
is  contained in \cite{ccv}).
Then, Theorem \ref{main_thm_nonlinear} for the case $\alpha<1$ follows once we
apply  the part $(II)$ of Theorem 
\ref{main_thm} to $w$.\\

\begin{comment}
the difference quotient\\
$K^h(t,x,y):=G(y-x)
\int_0^1\phi^{\prime\prime}\Big(
(1-s)[\theta(t,y)-\theta(t,x)]
+s[\theta(t,y+he)-\theta(t,x+he)]\Big)ds$
\end{comment}

%In order to apply Theorem 
%\ref{main_thm},
%the only condition which is non-trivial
%%still requires a computation 
%is the cancellation condition \eqref{cond_cancell} for the case $\alpha\geq1$.
For the the case $\alpha\geq1$, we need to verify 
the cancellation condition \eqref{cond_cancell}
to get the $weak$-$(*)$-kernel condition.
Let $s\in(0,1)$, $t\in[0,T]$ and $ x\in\mathbb{R}^N$. Then, we have
\begin{equation*}\begin{split}
\Big| \int_{S^{N-1}} 
k(t,x,s\sigma)\sigma d\sigma\Big|&=
\Big| \int_{S^{N-1}} 
K(t,x,x+s\sigma)|s\sigma|^{N+\alpha}\sigma d\sigma\Big|\\
%&=
%\Big| \int_{S^{N-1}} 
%\phi^{\prime\prime}(\theta(t,x+s\sigma)-\theta(t,x))G(s\sigma)s^{N+\alpha}
%\sigma d\sigma\Big|\\
&=
\Big| \int_{S^{N-1}_+} 
\phi^{\prime\prime}(\theta(t,x+s\sigma)-\theta(t,x))G(s\sigma)s^{N+\alpha}
\sigma d\sigma\\
&\quad\quad%\quad\quad
+
\int_{S^{N-1}_-} 
\phi^{\prime\prime}(\theta(t,x+s\sigma)-\theta(t,x))G(s\sigma)s^{N+\alpha}
\sigma d\sigma
\Big|\\
%&=
%\Big| \int_{S^{N-1}_+} 
%phi^{\prime\prime}(\theta(t,x+s\sigma)-\theta(t,x))G(s\sigma)s^{N+\alpha}
%\sigma d\sigma\\
%&\quad\quad\quad\quad+
%\int_{S^{N-1}_+} 
%\phi^{\prime\prime}(\theta(t,x+s(-\sigma))-\theta(t,x))G(s(-\sigma))s^{N+\alpha}
%-\sigma) d\sigma
%\Big|\\
\end{split}\end{equation*} where $S^{N-1}_+$ and $S^{N-1}_-$ are upper and lower
hemispheres, respectively. Then, by symmetry of $G(\cdot)$,
\begin{equation*}\begin{split}
&=
\Big| \int_{S^{N-1}_+} 
\Big[\phi^{\prime\prime}(\theta(t,x+s\sigma)-\theta(t,x))
-\phi^{\prime\prime}(\theta(t,x-s\sigma)-\theta(t,x))
\Big]G(s\sigma)s^{N+\alpha}
\sigma d\sigma.\\
\end{split}\end{equation*} We use the assumption $\phi^{\prime\prime}\in C^\nu$:
\begin{equation*}\begin{split}
&\leq
 \int_{S^{N-1}_+} 
[\phi^{\prime\prime}]_{C^{\nu}(\mathbb{R})}\cdot\Big|\theta(t,x+s\sigma)
-\theta(t,x-s\sigma)
\Big|^{\nu}
G(s\sigma)s^{N+\alpha}
|\sigma| d\sigma\\
&\leq
 \int_{S^{N-1}_+} 
[\phi^{\prime\prime}]_{C^{\nu}(\mathbb{R})}\cdot\|\nabla\theta(t)
\|^{\nu}_{L^\infty_{x}}
\Big|2s\sigma
\Big|^{\nu}
G(s\sigma)s^{N+\alpha}
 d\sigma\\
&\leq C%2
[\phi^{\prime\prime}]_{C^{\nu}(\mathbb{R})}
\cdot\|\nabla\theta_0\|^{\nu}_{L^\infty}\cdot\sqrt{\Lambda}
 \cdot s^{\nu}\cdot\int_{S^{N-1}_+} 
%1
(1+s^\omega)
 d\sigma\\
&\leq  C%(N\cdot V_N)
\cdot
 M \cdot\sqrt{\Lambda}\cdot
s^{\nu}\cdot(1+s^\omega)
\leq  C%2\cdot(N\cdot V_N)
\cdot
 M \cdot\sqrt{\Lambda}\cdot
s^{\nu}
\end{split}\end{equation*} where
 the  proof of non-increasing of 
$\|\nabla\theta(t)\|_{L^\infty_{x}}$
% $\|\nabla\theta\|_{L^\infty_{(t,x)}}\leq \|\nabla\theta_0\|_{L^\infty_{x}}$
is in the part $(II)$ of  Lemma \ref{lem_property_equation_with_kernel}.
By putting $\tau:=C
%2\cdot (N\cdot V_N)
\cdot M\cdot\sqrt{\Lambda}$ with $s_0:=1$,
we get the condition \eqref{cond_cancell}.
%In other words, $\nabla \theta$ is a smooth solution of 
%\eqref{main_lin_eq} whose kernel satisfies
%%for a kernel satisfying 
%the $(*)$-kernel condition
%% of $\{{\alpha},\zeta,\omega,{\Lambda^{-1}},\Lambda\}$ 
%%(for the case $\alpha\geq1$, 
%%of $\{{\alpha},\zeta,\omega,{\Lambda^{-1}},\Lambda,\nu,\tau,s_0\}$
%with $s_0:=1$ and $\tau:=2\cdot (N\cdot V_N)
%\cdot M\cdot\sqrt{\Lambda}$. 
Then, we apply  the part $(II)$ of
Theorem \ref{main_thm} to $w$.

\end{proof}

\bibliographystyle{plain}
\bibliography{bib_Kyudong_Choi_2011Dec28}

\end{document}